\numberwithin{equation}{section}
\DeclareMathOperator{\im}{im}
\DeclareMathOperator{\sgn}{sgn}
\DeclareMathOperator{\e}{e}
\DeclareMathOperator{\coker}{coker}
\begin{document}
\theoremstyle{plain}
\newtheorem{thm}{Theorem}[section]
\newtheorem{lem}[thm]{Lemma}
\newtheorem{prop}[thm]{Proposition}
\newtheorem{coll}[thm]{Corollary}
\newtheorem{conj}{Conjecture}[section]
\theoremstyle{definition}
\newtheorem{defn}{Definition}[section]
\newtheorem{exmp}{Example}[section]

\theoremstyle{remark}
\newtheorem{rem}{Remark}[section]
\newtheorem*{note}{Note}

\newcommand{\rb}{\mathbb{R}}
\newcommand{\lup}{\Delta_{i}^{up}}
\newcommand{\ldown}{\Delta_{i}^{down}}
\newcommand{\s}{\mathbf{s}}
\newcommand{\dote}{\stackrel{\circ}{=}}
\newcommand{\dotu}{\stackrel{\circ}{\cup}}
\begin{frontmatter}

\title{Interlacing inequalities for eigenvalues of discrete Laplace operators}
\author{Danijela Horak}
\address{Max Planck Institute for Mathematics in the Sciences,
  Inselstrasse 22, D-04103 Leipzig, Germany}
\author{J\"urgen Jost}
\address{Max Planck Institute for Mathematics in the Sciences,
  Inselstrasse 22, D-04103 Leipzig, Germany\\
Department of Mathematics and Computer Science, Leipzig
  University, D-04091 Leipzig, Germany\\
Santa Fe Institute for the Sciences of Complexity, Santa Fe, NM 87501,
USA}
\date{}
\begin{abstract}
The term {\it interlacing} refers to  systematic inequalities
between the sequences of eigenvalues of two operators defined on
objects related by a specific operation. In particular, knowledge of
the spectrum of one of the objects then implies eigenvalue bounds for
the other one.\\
In this paper, we therefore develop topological arguments in order to
derive such analytical inequalities. 
We investigate, in a general and systematic manner, interlacing of
spectra  for weighted simplicial complexes with arbitrary
weights. This enables us to control the spectral effects of operations
like deletion of a subcomplex, collapsing and contraction of  a
simplex,  coverings and simplicial maps, for  absolute and relative
Laplacians. It turns out that many  well-known results from graph
theory become special cases of our general results and consequently
admit improvements and generalizations. In particular, we
 derive a number of effective eigenvalue bounds.
 \end{abstract}
 \end{frontmatter}
\section{Introduction}
\label{section 1}
Spectra of Laplace operators typically encode important geometric
information about the space on which they are defined. This raises the
question of how to control the effect on the spectrum when we perform
some standard operation on that underlying space, like cutting out, contracting or
adding a part or taking a local covering. A good quantitative answer
to such a question will be very useful when we can start from some
space whose spectrum is explicitly known or at least tightly
controlled, and then pass by such operations to other spaces whose
spectrum we would like to know. In fact, as we shall explore in this
paper, there exist some general such relations between the spectra of
spaces related by specific operations. More precisely, the eigenvalues
of such spaces control each other, with inequalities like 
$$ \lambda_{k-\kappa_1}\le \theta_k \le \lambda_{k+\kappa_2}$$ for the
corresponding eigenvalues $\lambda_k$ and $\theta_k$, resp., with
integers $\kappa_1, \kappa_2$ that only depend on certain 
topological characteristics of the spaces and operations involved, but
are independent of the index $k$. See Theorem \ref{main theorem}. The main point of this paper then is
a systematic scheme how to derive such 
{\it analytical} inequalities from {\it topological}
considerations. In particular, this provides a unifying perspective on
various special results scattered throughout the literature.\\\\
We shall work here within the framework of generalized graphs, i.e.,  simplicial
complexes; let us start with a brief overview of necessary definitions and theorems.
An \emph{abstract simplicial complex} $K$ on a finite set $V$ is a collection of subsets of $V$ which is closed under inclusion. 
An $n$-face or an $n$-simplex of $K$ is an element of cardinality $n+1$, and the set of all $n$-faces of complex $K$ is denoted by 
$S_{n}(K)$. Two $n+1$-simplices sharing an $n$-face are called \emph{$n$-down neighbours}, and  two $n$-simplices belonging to a boundary of the same $(n+1)$-face are called  \emph{$(n+1)$-up neighbours}.
A simplicial complex $K$ is \emph{$(n+1)$-path connected}, if 
for every  pair of $(n+1)$-simplices $\bar{F}$, $\bar{F}'$ there exists a sequence of $(n+1)$-simplices $\bar{F}=\bar{F}_{1},\bar{F}_{2}\ldots,\bar{F}_{k}=\bar{F}'$, such that 
any two neighbouring ones are $n$-down neighbours. \\
\emph{An oriented simplex} $[F]$ is a simplex $F$ together with an
ordering of its vertices. Two orderings of the vertices are said to
determine the same/opposite orientation when related  by an even/odd permutation.\\
The $n$-th chain group with coefficients in $\rb$, denoted by $C_{n}(K,\rb)$, is   a vector  space  over  $\rb$ generated by oriented 
$n$-faces  of  $K$ modulo the relation $[F_{1}]+[F_{2}]=0$, when
$[F_{1}]$ and $[F_{2}]$ are two different orientations of
the same $n$-simplex.
Elements of $C_{n}(K,\rb)$  are formal $\rb$-linear  sums  of oriented $n$-dimensional faces $[F]$ of $K$.
Let $C^{n}(K,\rb)$ denote $n$-th cochain group, i.e., the dual of the vector space $C_{n}(K,\rb)$.
A basis  of $C^{n}(K,\rb)$ is  given by the set of  \emph{elementary cochains} $\{e_{[F]}\mid F \in S_{n}(K,\rb))\}$ with
$$e_{[F]}([F'])=\left\{  \begin{array}{ll}
1 & \textrm{ if } [F']=[F], \\
0 & \textrm{ otherwise. }
\end{array}
\right. $$
The coboundary operator  $\delta_{n}:C^{n}(K,\rb)\rightarrow C^{n+1}(K,\rb)$ is the linear map 
$$
(\delta_{n}f)([v_{0},\ldots,v_{n+1}]) =\sum_{i=0}^{n+1}(-1)^{i}f([v_{0},\ldots,\hat{v}_{i}\ldots v_{n+1}]),
$$
where  $\hat{v}_{i}$ denotes that the vertex $v_{i}$ has been omitted.

After introducing scalar products on  cochain vector spaces we define  adjoints $\delta_{n}^{*}:C^{n+1}(K,\rb) \rightarrow C^{n}(K,\rb)$ of the coboundary maps $\delta_{n}$
 and the combinatorial Laplace operator
$$
\mathcal{L}_{n}(K)=\delta_{n}^{*}\delta_{n}+ \delta_{n-1}\delta_{n-1}^{*}.
$$
The operators $\delta_{n}^{*}\delta_{n}$ and $ \delta_{n-1}\delta_{n-1}^{*}$  are called the \emph{$n$-up}  and \emph{$n$-down Laplace operators}  and are denoted by $\mathcal{L}_{n}^{up}(K)$ and $\mathcal{L}_{n}^{down}(K)$, respectively.

Scalar products are commonly chosen such  that  elementary cochains form an orthonormal basis.
In other words,
\begin{equation}
\label{natural scalar product}
(f,g)_{C^{n}}=\sum_{F\in S_{n}(K)}f([F])g([F]),
\end{equation}
where $(\: ,\: )_{C^{n}}$ denotes a scalar product defined on the vector space $C^{n}(K,\rb)$.
However, it is possible and, as we shall see, advantageous, to consider inner products on cochain spaces,  different from (\ref{natural scalar product}).
In particular, we will consider a scalar product that turns elementary
cochains into an orthogonal, but not necessarily orthonormal, basis.
In fact, a positive real valued weight function  $w:\bigcup_{i}S_{i}(K)\rightarrow \rb^{+}$  uniquely determines   scalar products on cochain groups, i.e.,
$$
(f,g)_{C^{n}}=\sum_{F\in S_{n}(K)} w(F)f([F])g([F]).
$$
A simplicial complex $K$ together with a weight function $w$ forms an ordered pair $(K,w)$, called a \emph{weighted simplicial complex}.
Depending on a weight function $w$ (scalar products on cochain groups),  one can consider different versions of Laplace operators, i.e., $\mathcal{L}_{n}(K,w)$.
For example, if the weight function is constant and equal to $1$ on every simplex, then  the underlying Laplacian is the combinatorial Laplace operator denoted by $L_{n}(K)$, as analysed in \cite{DuvalShifted},\cite{Friedman}.\\
 If  $w$ satisfies the normalizing condition
\begin{equation}
\label{normalizing condition}
w(F)=\sum_{\substack{\bar{F}\in S_{n+1}(K):\\F\in \partial \bar{F} }} w(\bar{F}),
\end{equation} 
 for every $F\in S_{n}(K)$, which is not a facet of $K$, then $w$
determines the \emph{normalized Laplace operator} denoted by $\Delta_{n}$.
For more details on this topic the reader is invited to consult \cite{HorakNCL}.

Let $[\bar{F}]=[v_{0},\ldots, v_{n+1}]$ be an oriented  $(n+1)$-face of a complex $K$, such that $v_{0}<\ldots< v_{n+1}$.
Then \emph{the boundary of } $[\bar{F}]$ is
$$
\partial [\bar{F}]=\sum_{i}(-1)^{i}[v_{0},\ldots,\hat{v_{i}},\ldots,v_{n+1}].
$$
The faces $F_{i}=\{v_{0},\ldots,\hat{v_{i}},\ldots,v_{n+1}\}$ are part
of the boundary of
$\bar{F}$, 
and $\sgn([F_{i}],\partial [\bar{F}])=(-1)^{i}$, where $[F_{i}]=[v_{0},\ldots,\hat{v_{i}},\ldots,v_{n+1}]$.
By abuse of notation, we denote 
 the set of  all $n$-dimensional faces in the boundary of $\bar{F}$ by $\partial \bar{F}$.\\ 
 The up- and down-Laplace operators for an arbitrary weight function $w$ are 
 \small
$$
(\mathcal{L}_{n}^{up} f)([F])= \sum_{\substack{\bar{F}\in S_{n+1}(K):\\ F\in\partial \bar{F}}} \frac{ w(\bar{F})}{w(F)} f([F]) 
+   \sum_{\substack{F'\in S_{n}: F\neq F',\\ F,F'\in\partial  \bar{F}}} \frac{w(\bar{F})}{w(F)}\sgn([F],\partial [\bar{F}])\sgn([F'],\partial [\bar{F}])f([F']),
$$
\normalsize
and  
\small
$$
(\mathcal{L}^{down}_{n} f)([F])= \sum_{E\in \partial F}\frac{w(F)}{w(E)}f([F])+ \sum_{\substack{F'\in S_{n}(K):\\ F\cap F'=E}}\frac{w(F')}{w(E)}
 \sgn([E],\partial [F])\sgn([E],\partial [F']) f([F']).
$$
\normalsize

The combinatorial Laplace operator  for pairs (relative Laplacian) can   be defined in a similar manner, see \cite{DuvalRelative} for  its definition  on chain complexes.  The   relative  Laplacian on cochain complexes is dual to this definition.
In particular,  
let $K$ be a simplicial complex and $K_{0}$ its subcomplex.  Then  the
$n$-th cochain group of a pair 
$(K,K_{0})$ is $$C^{n}(K,K_{0};\rb):=\{f\in C^{n}(K,\rb)\mid f([F])=0, \textrm{ for every } F\in S_{n}(K_{0})\},$$ and 
 the coboundary operator
$\delta_{n}:C^{n}(K,\rb)\rightarrow C^{n+1}(K,\rb)$  induces a homomorphism of relative groups 
$\delta_{n}':C^{n}(K,K_{0};\rb)\rightarrow C^{n+1}(K,K_{0};\rb)$, i.e.
$\delta_{n}'(f)([\bar{F}])=f(\partial_{n}[\bar{F}])$, for $f\in C^{n}(K,K_{0};\rb)$. Moreover,
$H^{n}(K,K_{0};\rb)\cong \tilde{H}^{n}(K/K_{0};\rb)$. For details on
relative cohomology the reader can consult  \cite{Hatcher} or
\cite{Massey}. The Laplace operators of a pair are
$\mathcal{L}^{up}_{n}=\delta_{n}'^{*}\delta_{n}'$, $\mathcal{L}^{down}_{n}=\delta_{n-1}'\delta_{n-1}'^{*}$.
\begin{figure}[h!tp]
  \begin{center}
\includegraphics[scale=0.5]{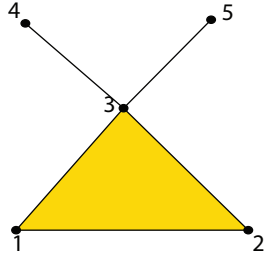}
  \end{center}
  \caption{Simplicial complex $K$}
  \label{figure relative }
\end{figure} 

\begin{exmp}
Let the weight function $w$ be constant and   $1$ on every face of 
$K$, as illustrated in Figure \ref{figure relative }.   
The up-Laplacians of the complex $K$ are  given by the following  matrices (with respect to lexicographical bases) 
\small
\[
\mathcal{L}^{up}_{0}(K) \!=\!\! \bordermatrix{~ & e_{[1]} & e_{[2]} & e_{[3]}& e_{[4]}& e_{[5]} \cr
                 e_{ [1]} & 2 & -1 & -1 & 0 & 0 \cr
                  e_{[2]} & -1 & 2 & -1 & 0 & 0\cr
                  e_{ [3]}&-1 & -1 & 4 & -1 & -1\cr
                  e_{ [4]}&0& 0 & -1 & 1 & 0\cr
                  e_{ [5]}& 0& 0 & -1 & 0 & 1\cr},
\quad
\mathcal{L}^{up}_{1}(K)\!= \!\! \bordermatrix{~ & e_{ [12]} & e_{ [13]} & e_{ [23]}& e_{ [34]}& e_{ [35]} \cr
                  e_{ [12]} & 1 & -1 & 1 & 0 & 0 \cr
                  e_{ [13]} & -1 & 1 & -1 & 0 & 0\cr
                  e_{ [23]}&1 & -1 & 1 & 0 & 0\cr
                  e_{ [34]}&0 & 0 & 0& 0 & 0\cr
                  e_{ [35]}& 0& 0 & 0 & 0 & 0\cr}.      
\]
\normalsize
The simplicial complex $K_{0}=\{\{1,2\}, \{3,4\}, \{1\}, \{2\},\{3\},\{4\}\}$  is a subcomplex of $K$, and the up-Laplace matrices of the pair $(K,K_{0})$ are
\small
\[
\mathcal{L}^{up}_{0}(K,K_{0}) \!=\!\! \bordermatrix{~ & e_{[5]}  \cr
                  e_{[5]}   & 1  \cr}
\quad
\textrm{and }
\quad
\mathcal{L}^{up}_{1}(K,K_{0})\!= \!\! \bordermatrix{~ & e_{[13]} & e_{[23]} & e_{[35]}] \cr
                  e_{[13]}  & 1 & -1 & 0  \cr
                  e_{[23]}  & -1 & 1 & 0 \cr
                  e_{[3 5]}  & 0 & 0 & 0\cr}.      
\]
\normalsize
Note that the matrix $\mathcal{L}^{up}_{1}(K,K_{0})$ is equal to the
matrix $\mathcal{L}^{up}_{1}(K)$ after deletion of the rows and columns indexed by 
$e_{[12]}$ and $ e_{[34]}$. Analogously for $\mathcal{L}^{up}_{0}(K,K_{0})$.
\end{exmp}

We shall not only consider positive valued weight functions, but also permit values equal to $0$, i.e.,  $w:\bigcup_{i}S_{i}(K)\rightarrow \rb^{+}\cup \{0\}$. In this case  we say that $(K,w)$  is a \emph{degenerate weighted simplicial complex}.  
The   above analysis  carries over to degenerate simplicial complexes, with the difference that $\delta_{n}^{*}$ is  called a formal adjoint in this case, and is defined as the standard  adjoint on its non-degenerate counterpart and extended by zero on simplices of weight zero, i.e.,
\begin{equation}
\label{formal adjoint}
\delta_{n}^{*}\e_{[\bar{F}]}=\sum_{\substack{F\in \partial \bar{F}:\\ w(F)\neq 0}}\frac{w(\bar{F})}{w(F)}\sgn([F],\partial [\bar{F}])e_{[F]},
\end{equation}
or equivalently, for $w(F)\neq 0$
\begin{equation}
\delta_{n}^{*}\bar{f}([F])=\sum_{\substack{ \bar{F}\in S_{n+1}:\\ F\in \partial \bar{F}, w(F)\neq 0}}\frac{w(\bar{F})}{w(F)}\sgn([F],\partial [\bar{F}])f([F]),
\end{equation}
and for $w(F)= 0$, $\delta_{n}^{*}\bar{f}([F])=0$.
\begin{rem}
Note that, in the case of degenerate simplicial complexes, a revised
version of the discrete Hodge theorem will hold. In particular, the
kernel of the $n$-Laplace operator will be isomorphic to  the direct
sum of  the  $n$-th homology group of a complex and the vector space over $\rb$ generated by  $n$-simplices of weight zero.
\end{rem}
\begin{rem}
Graphs with loops are  a special case of weighted graphs. For the graph Laplace operator, a loop effects only the degree of the corresponding 
vertex. The same is true for simplicial complexes.  We can define \emph{simplicial complexes with loops} in analogy to graphs with loops:
\begin{defn}
Let $[K]$ be an oriented simplicial complex.
If we allow oriented faces in which a vertex appears more than once, to be elements of $[K]$, then
we say that  $[v_{0},\ldots, v_{i}, v_{i+1}\ldots,
v_{i},v_{j+1},v_{n+1}]$ is a loop on the face $[v_{0},\ldots, v_{i}, v_{i+1}\ldots, \hat{v_{i}},v_{j+1},v_{n+1}]$
\end{defn}
It is not difficult to see that the boundary of a loop $[v_{0},\ldots, v_{i}, v_{i+1}\ldots, v_{i},v_{j+1},v_{n+1}]$ is zero. Thus, the Laplace operator defined on a simplicial complex $[K]$ with loops  will coincide with the one defined on $K$. The only place where we could account for loops is when defining the degree of a face, by adding the weight of the loop to its total degree.
Therefore, simplicial complexes with loops are already contained in the definition of weighted complexes, and we will not treat them separately.
We include simplicial complexes with loops here because they will arise naturally when we consider simplicial maps, which do not preserve dimensionality, see Section \ref{section 3}. 
\end{rem}
In this paper we develop a general framework for  eigenvalue
interlacing  of  generalized Laplace operators. This will, of course,
include the results  previously obtained on interlacing for combinatorial and normalized graph Laplacians, and adjacency matrices of  graphs.
Interlacing of the combinatorial Laplacian spectrum under deletion of an edge with non-pending vertices is a well-known result from algebraic graph theory, see \cite{Godsil}, Thm. 13.6.2.

Let $\lambda_{1}\leq  \ldots\leq \lambda_{n}$ be the eigenvalues of
$L^{up}_{0}(G)$ and  $\theta_{1}\leq \ldots\leq \theta_{n}$ the eigenvalues of $L^{up}_{0}(G-e)$, then
\begin{equation}
\label{Godsil interlacing}
\lambda_{k-1}\leq \theta_{k}\leq \lambda_{k}.
\end{equation}
A removal of a vertex and its incident edges was studied in \cite{Lotker} by Lotker, who proved the following
\begin{equation}
\label{Lotker interlacing}
\lambda_{k-1}\leq \theta_{k}\leq \lambda_{k+1}.
\end{equation}
Similar results were obtained by Chen et al. in \cite{Chen}, for the case  of the normalized graph Laplacians. In particular, 
\begin{equation}
\label{Chen interlacing}
\lambda_{k-1}\leq \theta_{k}\leq \lambda_{k+1},
\end{equation} 
 where  $\lambda_{i}$'s  and $\theta_{i}$'s are eigenvalues  ordered  non-decreasingly, of the normalized Laplacian of graph $G$ and of  $G-e$, respectively,  and more generally,
\begin{equation}
\label{Chen interlacing}
\lambda_{k-t}\leq \theta_{k}\leq \lambda_{k+t},
\end{equation}
where  the $\lambda_{i}$'s  and $\theta_{i}$'s are the eigenvalues
ordered  non-decreasingly, of the normalized Laplacian of the graph $G$ and  $G-H$, respectively, with $H$ being a spanning subgraph of $G$ on $t$ edges, and  $\lambda_{-t+1}=\ldots=\lambda_{-1}=\lambda_{0}=0$ and $\lambda_{n+1}=\ldots=\lambda_{n+t}=2$.
Butler in \cite{Butler} treats a  general case of interlacing on weighted graphs. 
He studies interlacing of $\Delta_{0}^{up}(G,w_{G})$ and $\Delta_{0}^{up}(L,w_{L})$, where $(L,w_{L})$ is  a weighted graph, obtained from 
$(G,w_{G})$ by deletion of its weighted subgraph  $(H,w_{H})$, which has  no isolated vertices.
The following inequalities are established
 \begin{equation}
 \label{Butler inequality}
\lambda_{k-t+1}\leq \theta_{k}\leq \left\{  \begin{array}{ll}
\lambda_{k+t-1} & \textrm{ if } H \textrm{ is bipartite,} \\
\lambda_{k+t} & \textrm{ otherwise, }
\end{array}
\right. 
\end{equation}
where $\lambda_{-t+1}=\ldots=\lambda_{-1}=0 $ and  $\lambda_{n}=\ldots=\lambda_{n+t-1}=2$, and $t$ is  the number of vertices of  $H$.
In Section \ref{section 2}  we analyze  interlacing of eigenvalues of
the generalised Laplacian $\mathcal{L}^{up}_{n}(K,w_{K})$ of a
weighted simplicial complex $(K,w_{K})$ and its weighted subcomplex
$(L,w_{L})$, and we obtain the following result. 
\begin{thm}
\label{main theorem}
Let $(K,w_{K})$ and $(H,w_{H})$  be  $(n+1)$-dimensional, weighted simplicial complexes such that 
their difference $(L,w_{L}):=(K-H,w_{K}-w_{L})$ is also a  simplicial
complex with non-negative weights.
Let $\lambda_{1}\leq \lambda_{2}\leq \ldots \leq \lambda_{N}$  and $\theta_{1}\leq \theta_{2}\leq \ldots \leq \theta_{N}$ 
 be the eigenvalues of $\mathcal{L}^{up}_{n}(K)$ and $\mathcal{L}^{up}_{n}(L)$, respectively. Then  we have
 $$\theta_{k}\leq \lambda_{ k+D_{H}} ,$$
for all $k = 1,2,\ldots, N-D_{H}$, and  for all $k = 1,2,\ldots, N$
 \begin{equation*}
 \lambda_{k-D_{\mathcal{W}}} \leq \theta_{k},
\end{equation*}
 where 
 $D_{\mathcal{W}}=\dim C^{n+1}( H,\rb)-\dim H^{n+1}( H,\rb) $, $D_{H}=\dim C^{n}(H,\rb)$, and  $0=\lambda_{1-D_{\mathcal{W}}}=\ldots=\lambda_{0} $.
\end{thm}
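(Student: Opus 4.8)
The plan is to reduce both inequalities to the Courant--Fischer (min--max) characterization of eigenvalues, carried out on a single ambient space. First I would record the quadratic-form decomposition that drives everything. Writing $R_{K}(f)=N_{K}(f)/M_{K}(f)$ for the Rayleigh quotient of $\mathcal{L}^{up}_{n}(K)$ on $C^{n}(K,\rb)$, where $N_{K}(f)=\sum_{\bar F\in S_{n+1}(K)}w_{K}(\bar F)(\delta_{n}f)([\bar F])^{2}$ and $M_{K}(f)=\sum_{F\in S_{n}(K)}w_{K}(F)f([F])^{2}$, and defining $R_{L},N_{L},M_{L}$ analogously with $w_{L}$, the hypothesis $w_{K}=w_{L}+w_{H}$ splits numerator and denominator additively: $N_{K}=N_{L}+N_{H}$ and $M_{K}=M_{L}+M_{H}$ as positive semidefinite forms on the common space $C^{n}(K,\rb)$ (here $C^{n}(L,\rb)=C^{n}(K,\rb)$ as vector spaces, the weight $w_{L}$ only changing the inner product). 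Note that the combinatorial cochain $\delta_{n}f$ is weight-independent, which is what makes these splittings clean.

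For the upper bound I would use the relative cochain space $W:=C^{n}(K,H;\rb)=\{f: f([F])=0\text{ for }F\in S_{n}(H)\}$, of dimension $N-D_{H}$. On $W$ both $H$-contributions vanish: $M_{H}(f)=0$ trivially, and $N_{H}(f)=0$ because for every $\bar F\in S_{n+1}(H)$ all faces in $\partial\bar F$ lie in $S_{n}(H)$, so $(\delta_{n}f)([\bar F])=0$. Hence $R_{L}=R_{K}$ identically on $W$. Letting $E_{\le m}$ denote the span of eigenvectors of $\mathcal{L}^{up}_{n}(K)$ for $\lambda_{1},\dots,\lambda_{m}$, the subspace $U:=W\cap E_{\le k+D_{H}}$ satisfies $\dim U\ge (N-D_{H})+(k+D_{H})-N=k$, and on it $R_{L}=R_{K}\le\lambda_{k+D_{H}}$; feeding a $k$-dimensional subspace of $U$ into $\theta_{k}=\min_{\dim U'=k}\max_{0\ne f\in U'}R_{L}(f)$ yields $\theta_{k}\le\lambda_{k+D_{H}}$.

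For the sharper lower bound the decisive observation is that equality of the quotients is not needed: only the one-sided comparison $R_{L}\ge R_{K}$ is required, and it holds on the larger space $\ker N_{H}$. Indeed $M_{L}=M_{K}-M_{H}\preceq M_{K}$, so $R_{L}(f)=N_{L}(f)/M_{L}(f)\ge N_{L}(f)/M_{K}(f)$, and on $\ker N_{H}$ one has $N_{L}=N_{K}$, whence $R_{L}(f)\ge R_{K}(f)$ there. Now $N_{H}(f)=\|\delta^{(H)}_{n}f\|^{2}_{w_{H}}$, where $\delta^{(H)}_{n}$ is the coboundary followed by restriction to $S_{n+1}(H)$; since $H$ is $(n+1)$-dimensional we have $H^{n+1}(H)=\coker\delta^{H}_{n}$, so $\rank\delta^{(H)}_{n}=\dim C^{n+1}(H)-\dim H^{n+1}(H)=D_{\mathcal{W}}$ and therefore $\dim\ker N_{H}=N-D_{\mathcal{W}}$. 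Intersecting with the top eigenspace $E_{\ge k-D_{\mathcal{W}}}$ gives a subspace of dimension at least $N-k+1$ on which $R_{L}\ge R_{K}\ge\lambda_{k-D_{\mathcal{W}}}$; the max--min formula $\theta_{k}=\max_{\dim U=N-k+1}\min_{0\ne f\in U}R_{L}(f)$ then yields $\lambda_{k-D_{\mathcal{W}}}\le\theta_{k}$. For $k\le D_{\mathcal{W}}$ the claim degenerates to the trivial $0\le\theta_{k}$, matching the convention $\lambda_{1-D_{\mathcal{W}}}=\cdots=\lambda_{0}=0$. This asymmetry---using all of $W$ (codimension $D_{H}$) for the upper bound but only $\ker N_{H}$ (codimension $D_{\mathcal{W}}$) for the lower---is exactly what produces the two different shifts.

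The main technical obstacle I anticipate is the degenerate inner product: when some $w_{L}(F)=0$ the form $M_{L}$ is only positive semidefinite, so $R_{L}$ and the min--max formulas must be read off the formal adjoint rather than a genuine pencil. I would dispose of this by running the variational argument on the subspace spanned by the positively weighted $n$-faces, where $M_{L}$ is definite, and separately accounting for the zero-weight faces through the degenerate Hodge theorem; the bookkeeping is arranged so that the resulting extra zero eigenvalues are precisely what the index range $k\le N-D_{H}$ and the padding $\lambda_{1-D_{\mathcal{W}}}=\cdots=\lambda_{0}=0$ are designed to absorb, and the inequality $R_{L}\ge R_{K}$ used above fails only on $\ker M_{L}$, which sits inside that kernel.
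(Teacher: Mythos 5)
Your proposal is correct, and it identifies exactly the two subspaces that drive the paper's proof, but it reaches them by a genuinely lighter mechanism. The paper never splits the quadratic forms additively; instead it introduces the inclusion $i$ and projection $\pi$ between $C^{n}(L,\rb)$ and $C^{n}(K,\rb)$, proves the commutative diagrams of Proposition \ref{Prop commute}, and factors the Rayleigh quotient multiplicatively, $\mathcal{R}_{\mathcal{L}_{n}^{up}(L)}(g)=\mathcal{R}_{\pi^{*}\pi}(\delta_{K}ig)\,\mathcal{R}_{\mathcal{L}_{n}^{up}(K)}(ig)\,\mathcal{R}_{i^{*}i}(g)$, controlling the outer factors through Lemmas \ref{lemma projection inequality}--\ref{lemma inclusion supprot} and then running a constrained min--max via Lemma \ref{lemma minmax}. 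Your identities $N_{K}=N_{L}+N_{H}$ and $M_{K}=M_{L}+M_{H}$ encode the same information: your $\ker N_{H}$ is the paper's condition $g|_{C^{n}(H,\rb)}\in\ker\delta_{H}$ (orthogonality to $\mathcal{W}=\coker\delta_{H}$, Lemma \ref{lemma projection}, with the same rank computation giving $D_{\mathcal{W}}$), and your relative cochain space $C^{n}(K,H;\rb)$ is the paper's condition $g\perp C^{n}(H,\rb)$ (Lemma \ref{lemma inclusion supprot}). What your route buys: no formal adjoints or commutative diagrams, the classical intersection--dimension form of Courant--Fischer in place of constrained optimization, and on the relative cochains you even get the exact equality $R_{L}=R_{K}$ (with $M_{L}=M_{K}$ there positive definite, so no degeneracy can occur in the upper bound), whereas the paper only uses the one-sided $\mathcal{R}_{\pi^{*}\pi}\leq 1$. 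What the paper's heavier machinery buys is reusability: the decomposition (\ref{decomposition}) and the operator lemmas are recycled essentially verbatim for Theorem \ref{Proposition interlacing}, for the covering and simplicial-map results of Section \ref{section 3}, and for Section \ref{section 5}, which a bare two-form argument would not supply.

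One step needs tightening: in the lower bound, the comparison $R_{L}\geq R_{K}$ on $\ker N_{H}$ must survive the degenerate directions, and your closing remark about $\ker M_{L}$ gestures at the right fact but is stated too loosely to close the argument. The precise statement you need is this: properness of the difference (the positive-weight part $L'$ being a simplicial complex) forces $\ker M_{L}\subseteq\ker N_{L}$, because a cochain supported on weight-zero $n$-faces has vanishing coboundary on every $(n+1)$-face of positive $w_{L}$-weight; hence $\ker M_{L}\cap\ker N_{H}\subseteq\ker N_{K}$. Consequently any nonzero $f$ in your test space $\ker N_{H}\cap E_{\geq k-D_{\mathcal{W}}}$ with $M_{L}(f)=0$ would satisfy $N_{K}(f)=0$, which is impossible once $\lambda_{k-D_{\mathcal{W}}}>0$ (since $N_{K}(f)\geq\lambda_{k-D_{\mathcal{W}}}M_{K}(f)$ and $M_{K}$ is positive definite), while for $\lambda_{k-D_{\mathcal{W}}}=0$ the bound is vacuous. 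With that line added, the convention $\mathcal{R}(f)=0$ when $(f,f)=0$ never actually bites and your max--min argument is complete; this is precisely the role played in the paper by Lemma \ref{lemma inclusion}(ii) together with Remark \ref{remark convention}, and it is the only place where the hypothesis that $L'$ is a simplicial complex enters.
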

Compared to  (\ref{Butler inequality}) and to (\ref{Chen
  interlacing}), a special case of the theorem above will yield a 
  sharper  lower interlacing inequality for the graph Laplacian with normalized weights, see Corollary \ref{Corollary graph interlacing}, and
  inequalities on the combinatorial graph Laplacian are a special case of Theorem  \ref{Proposition interlacing}. In the same section we obtain many well-known eigenvalue bounds   as a special case of more general inequalities, which are consequences of Theorem \ref{main theorem} and Theorem \ref{Proposition interlacing}.

In Section \ref{section 3} we analyze coverings of simplicial complexes, point out the failure of the definition of combinatorial covering used so far
\cite{Rotman}, \cite{Gustavson}, give the correct discretization of covering spaces and maps from the continious setting,   and  state a correct proof (Corollary \ref{Corollary strong coverings}), of  Theorem 4.4. from \cite{Gustavson}.
Furthermore, we analyze the general case when $\varphi:(K,w_{K})\rightarrow (K',w_{K'})$ is a simplicial map, and prove interlacing theorems for two natural choices of  the weight function $w_{K'}$ i.e., Theorem \ref{theorem intercing simplicial map rule(ii)} and Theorem \ref{Theorem interlacing 2nd choice of weights}.
In Section \ref{section 4} we study special cases of simplicial maps,  contractions and collapses,  and prove interlacing inequalities for the eigenvalues of the combinatorial and normalized Laplacians of such complexes.  As a special case, we prove the interlacing inequalities on graphs under the operation of contraction of an edge.
Section \ref{section 5}  treats the case of eigenvalues of the relative Laplacian.

\section{Interlacing theorems: deletion of a subcomplex}
\label{section 2}
As shown in \cite{HorakNCL}, the generalized  combinatorial Laplace operator $\mathcal{L}_{n}$ 
is self-adjoint positive definite and the set of its non-zero eigenvalues   is a disjoint union of the non-zero eigenvalues  of $\mathcal{L}^{up}_{n}$ and  $\mathcal{L}^{down}_{n}$.
Thus, it suffices to investigate only one of the following families of spectra
$$\{\s(\mathcal{L}_{n}(K))\mid -1\leq n \leq d\}\textrm{,}\; \{\s(\mathcal{L}_{n}^{up}(K))\mid -1\leq n \leq d\} \;\textrm{or}\; \{\s(\mathcal{L}_{n}^{down}(K))\mid 0\leq n\leq d\},$$
where $\s$ denotes the multiset of  eigenvalues. The operators
$\mathcal{L}^{up}_{n}$ and $\mathcal{L}^{down}_{n}$ are self-adjoint
and compact, whence we can apply the 
Spectral theorem,  the Variational characterization theorem, and the Min-max theorem (also known as Courant-Fischer-Weyl min-max principle) 
to characterize their eigenvalues.
We state these theorems below.  For proofs, see \cite{Stadler2007}.
\begin{thm}[Spectral theorem] 
\label{spectral theorem} 
Let A be a 
compact self-adjoint operator on a Hilbert space V. Then, there is an orthonormal basis of V consisting of eigenvectors of A.
\end{thm}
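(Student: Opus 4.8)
The plan is to reduce everything to a single extremal eigenvector existence lemma and then exhaust the space by an orthogonal-complement argument (Zorn's lemma in the general Hilbert space setting, ordinary induction in the finite-dimensional case relevant here).

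The key lemma I would prove first is: if $A \neq 0$ is compact and self-adjoint, then $A$ has an eigenvector with eigenvalue $\lambda$ satisfying $|\lambda| = \|A\|$. The starting point is the identity $\|A\| = \sup_{\|x\|=1}|\langle Ax, x\rangle|$, valid for self-adjoint $A$; the nontrivial inequality follows from a polarization estimate combined with the parallelogram law. Replacing $A$ by $-A$ if necessary, I may assume $\lambda := \sup_{\|x\|=1}\langle Ax,x\rangle = \|A\| > 0$. Choosing a maximizing sequence $x_n$ with $\|x_n\| = 1$ and $\langle Ax_n, x_n\rangle \to \lambda$ (the bracket is real by self-adjointness), the computation
$$\|Ax_n - \lambda x_n\|^2 = \|Ax_n\|^2 - 2\lambda\langle Ax_n,x_n\rangle + \lambda^2 \le 2\lambda^2 - 2\lambda\langle Ax_n,x_n\rangle \to 0$$
shows $Ax_n - \lambda x_n \to 0$, using $\|Ax_n\| \le \lambda$. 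Here compactness enters decisively: passing to a subsequence along which $Ax_n$ converges, say to $y$, forces $x_n = \lambda^{-1}\bigl(Ax_n - (Ax_n - \lambda x_n)\bigr) \to \lambda^{-1} y =: x$, a unit vector with $Ax = \lambda x$.

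With the lemma in hand, the second step is the exhaustion argument. I would consider the family of all orthonormal sets consisting of eigenvectors of $A$, partially ordered by inclusion, and pick a maximal such set $\{e_\alpha\}$ by Zorn's lemma, setting $W := \overline{\operatorname{span}}\{e_\alpha\}$. Since each $e_\alpha$ is an eigenvector, $A(W) \subseteq W$, and self-adjointness then gives $A(W^\perp) \subseteq W^\perp$. The restriction $A|_{W^\perp}$ is again compact and self-adjoint. If $W^\perp \neq \{0\}$, then either $A|_{W^\perp} = 0$, in which case any unit vector of $W^\perp$ is an eigenvector, or the lemma produces an eigenvector inside $W^\perp$; either way $\{e_\alpha\}$ could be enlarged, contradicting maximality. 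Hence $W^\perp = \{0\}$, so $\{e_\alpha\}$ is the desired orthonormal basis of eigenvectors.

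The main obstacle, and the only place compactness is genuinely used, is the attainment of the extremum in the key lemma: in infinite dimensions the unit sphere is not norm-compact, so without compactness of $A$ the supremum defining $\|A\|$ need not be achieved by any eigenvector (a self-adjoint operator may have purely continuous spectrum). Compactness converts the almost-eigenvector relation $Ax_n - \lambda x_n \to 0$ into genuine convergence of $x_n$, which is exactly what is required. In the finite-dimensional situation relevant to this paper, where $V = C^n(K,\rb)$ and every operator is automatically compact, this step is immediate from compactness of the unit sphere, and the Zorn's lemma exhaustion collapses to a finite induction on $\dim V$.
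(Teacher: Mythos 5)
Your proposal is correct, but there is nothing in the paper to compare it against: the paper does not prove Theorem \ref{spectral theorem} at all, stating it as a known result and deferring to the reference \cite{Stadler2007}. What you have written is the standard self-contained argument, and it holds up on inspection: the identity $\|A\|=\sup_{\|x\|=1}|\langle Ax,x\rangle|$ for self-adjoint $A$, the maximizing-sequence computation $\|Ax_n-\lambda x_n\|^2\le 2\lambda^2-2\lambda\langle Ax_n,x_n\rangle\to 0$, the use of compactness to upgrade the almost-eigenvector relation to genuine convergence of $x_n$, and the invariance $A(W^\perp)\subseteq W^\perp$ needed for the Zorn/induction exhaustion are all correctly handled; the degenerate case $A|_{W^\perp}=0$ is also covered. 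Your closing remark is the right one for this paper's context: since the spaces here are the finite-dimensional cochain groups $C^n(K,\rb)$ with a (possibly degenerate) weight-induced inner product, compactness is automatic, the extremum is attained by compactness of the unit sphere, and the transfinite step reduces to induction on $\dim V$. Note also that your extremal lemma is exactly the $k=1$ case of the variational characterization (Theorem \ref{variational theorem}) that the paper states immediately afterwards, so your route is consistent in spirit with the machinery the paper actually uses.
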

\begin{thm}[Variational characterization theorem]
\label{variational theorem}
 Let $f_{1},\ldots,f_{m}$ denote orthogonal  eigenfunctions corresponding
 to  eigenvalues $\lambda_{1}\leq \lambda_{1}\leq \ldots\leq \lambda_{m}$ of  a compact, self-adjoint operator $A$ on a Hilbert space $V$.
Let $F_{i}=\{f_{1},\ldots,f_{i}\}$ be the set of the first $i$ eigenfunctions  of $A$, and $F_{i}^{\perp}$ its orthogonal complement.
Then,
\begin{displaymath}
 \lambda_{i}=\min_{g\in F_{i-1}^{\perp} } \frac{(g,A g)}{(g,g)}=\max_{g\in F_{i}^{\perp} } \frac{(g,A g)}{(g,g)}.
\end{displaymath}
\end{thm}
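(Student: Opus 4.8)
The plan is to reduce the statement to the spectral decomposition supplied by Theorem \ref{spectral theorem} and to read off both extremal characterizations from the fact that the Rayleigh quotient is a weighted average of the eigenvalues. First I would apply the Spectral theorem to produce an orthonormal basis of eigenvectors of $A$; replacing $f_1,\ldots,f_m$ by their normalizations changes neither their orthogonality nor the relations $A f_j=\lambda_j f_j$, so I may assume $(f_j,f_k)=\delta_{jk}$. Expanding an arbitrary $g\in V$ as $g=\sum_j c_j f_j$ and using self-adjointness together with the eigenrelation gives $(g,Ag)=\sum_j \lambda_j\,|c_j|^2$ and $(g,g)=\sum_j |c_j|^2$. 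Hence the Rayleigh quotient $R(g):=(g,Ag)/(g,g)$ is the convex combination $R(g)=\sum_j \lambda_j p_j$, where the weights $p_j:=|c_j|^2\big/\sum_k |c_k|^2$ are non-negative and sum to $1$.

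Next I would dispatch the two characterizations separately, each being an immediate consequence of the monotone ordering $\lambda_1\le\cdots\le\lambda_m$. For the minimum, any $g\in F_{i-1}^{\perp}$ has $c_1=\cdots=c_{i-1}=0$, so $R(g)$ is a convex combination of $\lambda_i,\ldots,\lambda_m$, each of which is at least $\lambda_i$; therefore $R(g)\ge\lambda_i$, while the admissible choice $g=f_i\in F_{i-1}^{\perp}$ attains $R(f_i)=\lambda_i$, whence $\min_{g\in F_{i-1}^{\perp}}R(g)=\lambda_i$. For the maximum I would restrict to the subspace $F_i$ spanned by the first $i$ eigenfunctions: a vector $g\in\operatorname{span}\{f_1,\ldots,f_i\}$ has $c_{i+1}=\cdots=c_m=0$, so $R(g)$ is a convex combination of $\lambda_1,\ldots,\lambda_i$, each at most $\lambda_i$, giving $R(g)\le\lambda_i$ with equality again at $g=f_i$. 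Combining the two bounds yields the asserted double equality.

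I do not anticipate a genuine obstacle, since once the orthonormal eigenbasis is in hand the argument is purely elementary: it rests on the single observation that a weighted average of real numbers lies between their smallest and largest values, and equals an endpoint exactly when all the weight is concentrated there. The only points deserving care are, first, that the extrema are actually attained and not merely approached — this holds because $R$ is continuous and, being homogeneous of degree zero, may be restricted to the unit sphere of the relevant finite-dimensional subspace, which is compact (for the minimum, attainment is moreover witnessed explicitly by $g=f_i$) — and, second, keeping the index bookkeeping consistent, namely that the \emph{minimum} is taken over the orthogonal complement of the first $i-1$ eigenfunctions, so that only eigenvalues $\ge\lambda_i$ survive, whereas the \emph{maximum} is taken over the span $F_i$ of the first $i$ eigenfunctions, so that only eigenvalues $\le\lambda_i$ survive.
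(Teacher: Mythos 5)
Your argument is correct and is the classical one; note that the paper itself gives no proof of this theorem at all (it defers to \cite{Stadler2007}), so there is no internal proof to compare against, and your route — spectral decomposition, then the observation that the Rayleigh quotient is a convex combination $\sum_j \lambda_j p_j$ of the eigenvalues, with extrema read off from the ordering and attained at $g=f_i$ — is exactly the standard textbook argument the citation points to. Your handling of attainment is also sound: for the minimum over $F_{i-1}^{\perp}$, which need not be finite-dimensional, you rightly exhibit the minimizer $f_i$ explicitly rather than relying on compactness of a unit sphere.

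One point you should make explicit rather than pass over: you have silently \emph{corrected} the statement. As printed, the second characterization is $\max_{g\in F_{i}^{\perp}}\mathcal{R}_A(g)$ where $F_i^{\perp}$ is the orthogonal complement of $\{f_1,\ldots,f_i\}$; on that set every $g$ has $c_1=\cdots=c_i=0$, so $\mathcal{R}_A(g)$ is a convex combination of $\lambda_{i+1},\ldots,\lambda_m$, whose maximum is $\lambda_m$ (and whose minimum is $\lambda_{i+1}$) — not $\lambda_i$. The assertion is true only with the maximum taken over $\operatorname{span}\{f_1,\ldots,f_i\}=(F_i^{\perp})^{\perp}$, i.e., over $g\perp F_i^{\perp}$, which is precisely the domain you substituted in your second paragraph. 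A blind proof should flag this discrepancy as a typo in the theorem (presumably a missing dualization, $\max_{g\perp F_i^{\perp}}$) instead of quietly proving the repaired statement; otherwise a reader checking your proof against the printed claim will conclude you proved something else. Beyond that, your expansion $g=\sum_j c_j f_j$ uses completeness of the eigenbasis, which is exactly what the paper's Theorem \ref{spectral theorem} supplies, and in the paper's applications $V$ is a finite-dimensional cochain space, so no further care is needed there.
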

The quantity  $ \frac{(g,A g)}{(g,g)}$ is commonly 
 called the \textit{Rayleigh quotient} $\mathcal{R}_{A}(g)$. 
\begin{thm}[Min-max theorem]
\label{min-max theorem}
Let $\mathcal{V}_{k}$  denote a $k$-dimensional subspace  of $V$. Then,
\begin{equation}
\label{min-max equation}
\lambda_{k}=\min_{\mathcal{V}_{k}}\max_{g\in V_{k}}\mathcal{R}_{A}(g)=\max_{\mathcal{V}_{m-k+1}}\min_{g\in \mathcal{V}_{m-k+1}}\mathcal{R}_{A}(g)
\end{equation}
and
\begin{equation}
\label{min-max equation1}
\lambda_{k}=\min_{\mathcal{V}_{n-k}}\max_{g\perp V_{n-k}}\mathcal{R}_{A}(g)=\max_{\mathcal{V}_{k-1}}\min_{g\perp \mathcal{V}_{k-1}}\mathcal{R}_{A}(g).
\end{equation}
\end{thm}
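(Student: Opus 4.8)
The plan is to derive the Min-max theorem from the Variational characterization theorem (Theorem \ref{variational theorem}) together with a single dimension-counting observation. Throughout I fix the orthonormal eigenbasis $f_1,\ldots,f_N$ furnished by the Spectral theorem (Theorem \ref{spectral theorem}), with $Af_i=\lambda_i f_i$ and $\lambda_1\le\cdots\le\lambda_N$, and write $F_i=\mathrm{span}\{f_1,\ldots,f_i\}$, so that $F_i^{\perp}=\mathrm{span}\{f_{i+1},\ldots,f_N\}$ and $N=\dim V$ plays the role of both $m$ and $n$ in the statement. The one geometric input is the elementary fact that if $U,W\subseteq V$ satisfy $\dim U+\dim W>N$, then $U\cap W\neq\{0\}$, since $\dim(U\cap W)=\dim U+\dim W-\dim(U+W)\ge\dim U+\dim W-N$. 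I will also use repeatedly that for $g=\sum_{i=1}^{k}c_i f_i\in F_k$ one has $\mathcal{R}_A(g)=(\sum_{i=1}^{k}c_i^2\lambda_i)/(\sum_{i=1}^{k}c_i^2)\le\lambda_k$, with equality at $g=f_k$, so that $\max_{g\in F_k}\mathcal{R}_A(g)=\lambda_k$; this is a direct consequence of the spectral decomposition and requires nothing beyond Theorem \ref{spectral theorem}.

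First I would establish $\lambda_k=\min_{\mathcal{V}_k}\max_{g\in\mathcal{V}_k}\mathcal{R}_A(g)$. For the inequality $\le$, I take the distinguished test space $\mathcal{V}_k=F_k$, for which the max of $\mathcal{R}_A$ equals $\lambda_k$ by the computation above; hence the min over all $k$-dimensional $\mathcal{V}_k$ is at most $\lambda_k$. For the reverse inequality $\ge$, let $\mathcal{V}_k$ be arbitrary. Since $\dim\mathcal{V}_k+\dim F_{k-1}^{\perp}=k+(N-k+1)=N+1>N$, the intersection $\mathcal{V}_k\cap F_{k-1}^{\perp}$ contains a nonzero vector $g$; by Theorem \ref{variational theorem}, $\mathcal{R}_A(g)\ge\min_{h\in F_{k-1}^{\perp}}\mathcal{R}_A(h)=\lambda_k$, so $\max_{g\in\mathcal{V}_k}\mathcal{R}_A(g)\ge\lambda_k$ for every $\mathcal{V}_k$. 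Combining the two bounds gives the claimed identity.

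Next I would prove the max-min identity $\lambda_k=\max_{\mathcal{V}_{N-k+1}}\min_{g\in\mathcal{V}_{N-k+1}}\mathcal{R}_A(g)$ by the mirror-image argument. Choosing $\mathcal{V}_{N-k+1}=F_{k-1}^{\perp}$, Theorem \ref{variational theorem} gives $\min_{g}\mathcal{R}_A(g)=\lambda_k$, which yields $\ge$. For $\le$, any $(N-k+1)$-dimensional subspace $\mathcal{V}$ meets $F_k$ nontrivially since $(N-k+1)+k>N$; on a nonzero intersection vector $g$ one has $\mathcal{R}_A(g)\le\max_{h\in F_k}\mathcal{R}_A(h)=\lambda_k$, so $\min_{g\in\mathcal{V}}\mathcal{R}_A(g)\le\lambda_k$ for every such $\mathcal{V}$. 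Finally, equation (\ref{min-max equation1}) follows from (\ref{min-max equation}) by passing to orthogonal complements: the map $\mathcal{V}\mapsto\mathcal{V}^{\perp}$ is a bijection between $(N-k)$-dimensional and $k$-dimensional subspaces under which the condition ``$g\perp\mathcal{V}_{N-k}$'' becomes ``$g\in\mathcal{V}_{N-k}^{\perp}$'' with $\dim\mathcal{V}_{N-k}^{\perp}=k$, and similarly ``$g\perp\mathcal{V}_{k-1}$'' becomes membership in an $(N-k+1)$-dimensional space, so the two displayed lines are literal restatements of each other.

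The main obstacle, and essentially the only place requiring an idea rather than bookkeeping, is the dimension-counting intersection step: it is what forces the existence of a single test vector that is simultaneously constrained by the competing subspace $\mathcal{V}_k$ and controlled by the eigenspace filtration $F_{k-1}$ (respectively $F_k$). The Rayleigh-quotient estimates themselves are routine consequences of the spectral decomposition. A minor bookkeeping point to reconcile is the clash between the symbols $m$ and $n$ in the statement, both of which I read as $\dim V=N$.
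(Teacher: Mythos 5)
Your proof is correct, but note that the paper does not actually prove Theorem \ref{min-max theorem} at all: it states it together with Theorems \ref{spectral theorem} and \ref{variational theorem} and defers all proofs to \cite{Stadler2007}. So there is nothing internal to compare against; what you have supplied is the classical Courant--Fischer argument, and it is a sound, self-contained substitute for the citation. Your two ingredients are exactly the right ones: the dimension count $\dim U+\dim W>\dim V\Rightarrow U\cap W\neq\{0\}$ to produce a test vector lying in an arbitrary competing subspace $\mathcal{V}_k$ while still controlled by the eigenspace filtration, and the spectral-decomposition estimate $\max_{g\in F_k}\mathcal{R}_A(g)=\lambda_k$ for the distinguished choice. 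Two remarks on bookkeeping, both to your credit. First, you correctly read the statement's $m$ and $n$ as the same quantity $\dim V=N$ (and $V_k$ as $\mathcal{V}_k$); these are typos in the paper, and your derivation of (\ref{min-max equation1}) from (\ref{min-max equation}) via the complement bijection $\mathcal{V}\mapsto\mathcal{V}^{\perp}$ is the cleanest way to dispose of the second display. Second, you wisely use only the $\min_{g\in F_{i-1}^{\perp}}$ half of Theorem \ref{variational theorem} and prove $\max_{g\in F_k}\mathcal{R}_A(g)=\lambda_k$ directly from the eigenbasis: the other half as printed in the paper ($\max_{g\in F_i^{\perp}}$, which with increasing ordering would give $\lambda_m$, not $\lambda_i$) is itself a typo for $\max_{g\in F_i}$, so leaning on it would have imported an error. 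One point you could make explicit: the maxima and minima over subspaces are attained because in the paper's setting the cochain spaces are finite-dimensional, so the Rayleigh quotient is continuous on the compact unit sphere of each $\mathcal{V}_k$; your argument implicitly assumes this attainment, and it is harmless here but worth a sentence.
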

We shall use the following slight modification of the min-max theorem.
\begin{lem}
\label{lemma minmax}
Let $\mathcal{V}_{k}$ be, as before, a $k$-dimensional subspace of a Hilbert  space $V$ and let $A:V\rightarrow V$ be a compact, self-adjoint  operator. Then 
$$\lambda_{k-s}=\min_{V_{k}}\max_{x\in V_{k-s}}A(x)=  \min_{V_{k-s}}\max_{x\in V_{k-s}}A(x)=\lambda_{k-s}.$$
Furthermore,
\[
\lambda_{k}=\min_{V_{k}}\max_{x\in V_{k}}A(x)\geq \min_{V_{k-s}}\max_{x\in V_{k-s}}A(x)= \min_{V_{k}}\max_{x\in V_{k-s}}A(x)=\lambda_{k-s}
\]
\end{lem}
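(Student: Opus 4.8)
The plan is to reduce everything to the Min-max theorem (Theorem \ref{min-max theorem}) by first unpacking the compressed notation. Throughout, $A(x)$ abbreviates the Rayleigh quotient $\mathcal{R}_{A}(x)$, and in the expression $\min_{V_{k}}\max_{x\in V_{k-s}}$ the inner space $V_{k-s}$ is to be read as a $(k-s)$-dimensional subspace of the $k$-dimensional subspace $V_{k}$, with the minimization running over both choices. Since $A$ is compact and self-adjoint, the Spectral theorem (Theorem \ref{spectral theorem}) supplies an orthonormal eigenbasis $f_{1},\dots,f_{m}$ with $\lambda_{1}\le\cdots\le\lambda_{m}$, which I would use to build the explicit test subspaces.

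The key step is to observe that inserting the auxiliary envelope $V_{k}$ does not change the family of admissible test spaces. First I would note that as $V_{k}$ ranges over all $k$-dimensional subspaces and $V_{k-s}$ over all $(k-s)$-dimensional subspaces of $V_{k}$, the space $V_{k-s}$ ranges over \emph{exactly} all $(k-s)$-dimensional subspaces of $V$: any $(k-s)$-dimensional subspace extends to a $k$-dimensional one, since $0\le k-s\le k\le m$. Hence the double minimum collapses, $\min_{V_{k}}\min_{V_{k-s}\subseteq V_{k}}\max_{x\in V_{k-s}}\mathcal{R}_{A}(x)=\min_{V_{k-s}}\max_{x\in V_{k-s}}\mathcal{R}_{A}(x)$, and the right-hand side equals $\lambda_{k-s}$ by the Min-max theorem applied at index $k-s$. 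This already yields the first displayed chain of equalities.

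For the second display I would assemble the inequality chain from pieces now in hand. The equality $\lambda_{k}=\min_{V_{k}}\max_{x\in V_{k}}\mathcal{R}_{A}(x)$ is the Min-max theorem at index $k$; the inequality $\ge$ holds because restricting the maximum from $V_{k}$ to its subspace $V_{k-s}$ can only lower it, so $\max_{x\in V_{k}}\mathcal{R}_{A}(x)\ge\max_{x\in V_{k-s}}\mathcal{R}_{A}(x)$ before passing to the minimum; and the final two equalities are precisely what was established in the previous paragraph. As a check on the value I would verify both bounds directly from the eigenbasis: taking $V_{k-s}=\langle f_{1},\dots,f_{k-s}\rangle\subseteq V_{k}=\langle f_{1},\dots,f_{k}\rangle$ gives $\max_{x\in V_{k-s}}\mathcal{R}_{A}(x)=\lambda_{k-s}$, proving $\le\lambda_{k-s}$, while for the reverse bound a dimension count shows that every $(k-s)$-dimensional $V_{k-s}$ meets $\langle f_{k-s},\dots,f_{m}\rangle$ nontrivially, where $\mathcal{R}_{A}\ge\lambda_{k-s}$, forcing $\max_{x\in V_{k-s}}\mathcal{R}_{A}(x)\ge\lambda_{k-s}$.

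The main obstacle here is notational rather than analytic: the single $\min_{V_{k}}$ in the statement conceals a minimization over the nested pair $V_{k-s}\subseteq V_{k}$, and the entire content of the lemma is that this nesting is harmless. Once the quantifier structure is pinned down, there is no difficulty beyond the Min-max theorem. The only remaining care concerns the boundary cases: when $k-s\le 0$ the symbol $\lambda_{k-s}$ must be interpreted through the padding convention $0=\lambda_{1-D_{\mathcal{W}}}=\cdots=\lambda_{0}$ fixed in Theorem \ref{main theorem}, so that the degenerate low-index instances do not break the chain of (in)equalities.
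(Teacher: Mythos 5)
Your proof is correct and follows exactly the route the paper intends: the paper states Lemma \ref{lemma minmax} without proof, presenting it as a ``slight modification of the min-max theorem,'' and your argument—pinning down the nested-quantifier reading of $\min_{V_{k}}\max_{x\in V_{k-s}}$, collapsing the double minimization since every $(k-s)$-dimensional subspace extends to a $k$-dimensional one, and invoking Theorem \ref{min-max theorem} at index $k-s$—is precisely that reduction, made explicit. Your eigenbasis verification and the remark on the padding convention for $k-s\le 0$ are sound additions but not a departure from the paper's approach.
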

\begin{rem}
\label{remark convention}
 Theorem \ref{spectral theorem}, Theorem \ref{variational theorem},
 Theorem \ref{min-max theorem},  and Lemma \ref{lemma minmax}    also
 hold  when the vector space $V$ is  equipped with a degenerate inner product, with the convention that $\mathcal{R}_{A}(g)=0$, if 
 $(g,g)=0$.
\end{rem}
From the previous considerations we deduce the following. Given a weighted simplicial complex $(K,w_{K})$, 
there exists  an orthonormal basis of  $C^{n}(K,\rb)$ (with
  the scalar product  induced by the weight function as discussed in Section \ref{section 1})  that consists of eigenfunctions $f_{1},\ldots,f_{m}$ corresponding
 to  eigenvalues $\lambda_{1}\leq \ldots\leq\lambda_{m}$ of $\mathcal{L}^{up}_{n}(K)$. Moreover, for every $g\in C^{n}(K,\rb)$,
\begin{displaymath}
 \mathcal{L}_{n}^{up}(g)=\sum_{i=1}^{m}\lambda_{i}(g,f_{i})f_{i}
\end{displaymath}
and
\begin{displaymath}
 (g,\mathcal{L}_{n}^{up}(g)) = \sum_{i=1}^{m}\lambda_{i}(g,f_{i})^{2}.
\end{displaymath}
\begin{defn}
A weighted simplicial complex $(H,w_{H})$ is a \emph{subcomplex} of $(K,w_{K})$ iff 
 $H$ is a subcomplex of $K$ and $w_{H}(F)\leq w_{K}(F)$ for every face $F$ of $H$.
\end{defn} 
 \begin{defn}
 \label{Difference}
 A \emph{proper difference} of the weighted simplicial complexes  $(K,w_{K})$ and $(H,w_{H})$ is a degenerate weighted simplicial complex $(L,w_{L})$, such that 
$L\equiv K$,  $w_{L}:=w_{K}-w_{H}$, and $L'=\{F\in K\mid w_{L}(F)>0\}$ is a simplicial complex.
 \end{defn}
 \begin{rem}
\label{Remark H}
When analyzing $\mathcal{L}_{n}^{up}(K)$, it suffices to consider the  $(n+1)$-skeleton of a simplicial complex $K$.
For details see \cite{HorakNCL}, Section 6.1. Therefore, without  loss of generality, we can assume that $K$ and $H$ are  $(n+1)$-dimensional.
\end{rem}
 \begin{prop}
 \label{Prop commute}
 Let $(K,w_{K})$ be a weighted simplicial complex, $(H,w_{H})$ its subcomplex, and
 $(L,w_{L})$ their proper difference. 
 Let $i_{n}:C^{n}(L,\rb)\rightarrow C^{n}(K,\rb)$ be an inclusion  map, such that  $(i_{n}g)[F]=g([F])$, for every $F\in S_{n}(K)$, $g\in C^{n}(L,\rb)$ and 
 let  $\pi_{n+1}:C^{n+1}(K,\rb)\rightarrow C^{n+1}(L,\rb)$ be a
 projection map such that   $(\pi_{n+1} \bar{f})[\bar{G}]=
 \bar{f}([\bar{G}])$, for all $\bar{G}\in S_{n+1}(L)$ and $\bar{f}\in
 C^{n+1}(K,\rb)$. We consider the following formal\footnote{The
   epithete "formal"  emphasizes  that the scalar product may be
   degenerate.} adjoints $i_{n}^{*}: C^{n}(K,\rb)\rightarrow
 C^{n}(L,\rb)$ and   $\pi_{n+1}^{*}:C^{n+1}(L,\rb)\rightarrow
 C^{n+1}(K,\rb)$ of the inclusion and projection maps, respectively, 
 \begin{equation}
i^{*}_{n}f([G])=\left\{\begin{array}{ll}
\frac{w_{K}(G)}{w_{L}(G)}f(i_{n}[G])=\frac{w_{K}(G)}{w_{L}(G)}f(i_{n}[G]) & \textrm{ if } w_{L}(G)>0,\\
0 &  \textrm{ otherwise,} \\
\end{array}
\right. 
\end{equation}
and
\begin{equation}
\pi_{n+1}^{*}\bar{g}([\bar{F}])=\frac{w_{L}(\bar{F})}{w_{K}(\bar{F})}g(\pi_{n+1}[\bar{F}])=\frac{w_{L}(\bar{F})}{w_{K}(\bar{F})}g([\bar{F}]).
\end{equation}
Then for every $0\leq n < \dim K$  the following diagrams commute.
\begin{equation}
\label{two commutative diagrams}
\begin{CD}
C^{n+1}(K,\rb)  @<\delta_{K}<<   C^{n}(K,\rb) \\
 @VV\pi_{n+1} V    @AAi_{n} A \\
C^{n+1}(L,\rb)  @<\delta_{L}<<   C^{n}(L,\rb) \\
\end{CD}
\qquad
\begin{CD}
C^{n+1}(K,\rb)  @>\delta^{*}_{K}>>   C^{n}(K,\rb) \\
 @AA\pi^{*}_{n+1} A    @VVi^{*}_{n} V \\
C^{n+1}(L,\rb)  @>\delta^{*}_{L}>>   C^{n}(L,\rb) \\
\end{CD}
\end{equation}
\end{prop}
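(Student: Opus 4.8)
The plan is to settle the left diagram almost for free and then verify the right diagram --- its formal-adjoint counterpart --- by a direct computation on basis cochains, the only genuine work being the bookkeeping of weight ratios and the treatment of the zero-weight simplices.

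Since $(L,w_L)$ is the proper difference of $(K,w_K)$ and $(H,w_H)$, we have $L\equiv K$, so for each $m$ the cochain groups $C^m(L,\rb)$ and $C^m(K,\rb)$ are one and the same vector space --- they are spanned by the same oriented simplices --- and differ only through the inner products induced by $w_L$ and $w_K$. Under this identification $i_n$ and $\pi_{n+1}$ act as the identity on underlying vectors, and the coboundary formula $(\delta_m f)([v_0,\ldots,v_{m+1}])=\sum_i(-1)^i f([v_0,\ldots,\hat{v}_i,\ldots,v_{m+1}])$ involves no weights, so $\delta_K=\delta_L$ as linear maps. Hence $\pi_{n+1}\circ\delta_K\circ i_n=\delta_L$, which is exactly the commutativity of the left diagram.

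For the right diagram I would evaluate $i_n^*\circ\delta_K^*\circ\pi_{n+1}^*$ on a basis cochain $e_{[\bar{G}]}$, $\bar{G}\in S_{n+1}(K)$, and compare with $\delta_L^* e_{[\bar{G}]}$. Applying the definitions in turn, $\pi_{n+1}^*$ scales by $w_L(\bar{G})/w_K(\bar{G})$; the genuine adjoint $\delta_K^*$ (legitimate since $w_K>0$ everywhere) produces $\sum_{F\in\partial\bar{G}}\frac{w_K(\bar{G})}{w_K(F)}\sgn([F],\partial[\bar{G}])\,e_{[F]}$; and $i_n^*$ multiplies the coefficient of each surviving $e_{[F]}$ by $w_K(F)/w_L(F)$, annihilating those $F$ with $w_L(F)=0$. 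The three ratios telescope, $\frac{w_L(\bar{G})}{w_K(\bar{G})}\cdot\frac{w_K(\bar{G})}{w_K(F)}\cdot\frac{w_K(F)}{w_L(F)}=\frac{w_L(\bar{G})}{w_L(F)}$, leaving $\sum_{F\in\partial\bar{G}:\,w_L(F)\neq 0}\frac{w_L(\bar{G})}{w_L(F)}\sgn([F],\partial[\bar{G}])\,e_{[F]}$, which is $\delta_L^* e_{[\bar{G}]}$ by the formal-adjoint formula (\ref{formal adjoint}). I would also record the one-line checks that the stated $i_n^*$ and $\pi_{n+1}^*$ are indeed the formal adjoints, obtained by matching $(i_n g,f)_{C^n(K)}$ with $(g,i_n^* f)_{C^n(L)}$ coefficient by coefficient on the simplices of positive $w_L$-weight, and similarly for $\pi_{n+1}$.

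The one place that is not purely formal is the degenerate locus $w_L(F)=0$, where the scalar product on $C^n(L,\rb)$ has a kernel; there the naive rule $(AB)^*=B^*A^*$ is not available, which is why I favour the explicit basis computation over a blind transposition of the left diagram. The mechanism that rescues the argument is exactly the extension-by-zero built into the formal adjoint: $i_n^*$ kills the zero-weight directions, and this precisely reproduces the restricted summation range $\{F:w_L(F)\neq 0\}$ appearing in the formal adjoint of $\delta_L$ (and, when $w_L(\bar{G})=0$, both composites vanish, since then $\pi_{n+1}^* e_{[\bar{G}]}=0$). Once this matching is confirmed, both composites agree as linear maps and both diagrams commute for every $0\leq n<\dim K$.
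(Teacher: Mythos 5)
Your proposal is correct: the left square commutes trivially because $L\equiv K$ as simplicial complexes (so $\delta_K=\delta_L$ and $i_n,\pi_{n+1}$ are identities on coefficients), and your basis computation with the telescoping weight ratios, including the careful handling of the zero-weight simplices via the extension-by-zero in the formal adjoints, verifies the right square. This direct verification is exactly the argument the paper has in mind when it dismisses the proof as ``elementary,'' so you have simply supplied the details the authors omitted.
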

\begin{proof}
The proof is elementary.
\end{proof}
For brevity, in what follows, we  omit the index $n$ in $i_{n}$, $\pi_{n}$, $i^{*}_{n}$, and $\pi^{*}_{n}$.
From the definitions of inclusion and projection maps, it is straightforward to calculate
\begin{equation}
\label{projection adjoint}
\pi^{*}\pi \bar{f}([\bar{F}])=\frac{w_{L}(\bar{F})}{w_{K}(\bar{F})}\bar{f}([\bar{F}]),
\end{equation}
and
\begin{equation}
\label{inclusion adjoint}
i^{*}i g([G])=\left\{\begin{array}{ll}
\frac{w_{K}(G)}{w_{L}(G)}g([G]) & \textrm{ for } w_{L}(G)>0,\\
 0&  \textrm{ otherwise. }\\
\end{array}
\right.
\end{equation}
Furthermore, from the commutativity of the diagrams  (\ref{two commutative diagrams}), we derive
  \begin{align}
  \mathcal{R}_{\mathcal{L}_{n}^{up}(L)}(g)=  \frac{(\delta^{*}_{L} \delta_{L}g,g)}{(g,g)}&=   \frac{(i^{*}\delta^{*}_{K}\pi^{*}\pi\delta_{K}ig,g)}{(g,g)}\notag\\
   &=    \frac{(\delta^{*}_{K}\pi^{*}\pi\delta_{K}ig,ig)}{(g,g)}\notag\\
   &=    \frac{(\delta^{*}_{K}\pi^{*}\pi\delta_{K}ig,ig)}{(ig,ig)}\frac{(ig,ig)}{(g,g)}\notag\\
    &=    \frac{(\pi^{*}\pi\delta_{K}ig,\delta_{K}ig)}{(ig,ig)}\frac{(i^{*}ig,g)}{(g,g)}\notag\\
     &=    \frac{(\pi^{*}\pi\delta_{K}ig,\delta_{K}ig)}{(\delta_{K}ig,\delta_{K}ig)}
     \frac{(\delta_{K}ig,\delta_{K}ig)}{(ig,ig)}\frac{(ig,ig)}{(g,g)}\notag\\
    &=  \mathcal{R}_{\pi^{*}\pi}(\delta_{K}ig)   \mathcal{R}_{\mathcal{L}_{n}^{up}(K)}(ig)\mathcal{R}_{i^{*}i}(g)\label{decomposition}.
 \end{align}
 Here we adopt the convention of Remark \ref{remark convention}  that $\mathcal{R}_{A}(f)=0$ whenever 
 the denominator of the Rayleigh quotient is $0$. One can easily check that this convention does not affect the equality above.
 \begin{rem}
 We point out that with the simplicial complex $L'$ that avoids the
 degeneracy of the scalar product, instead of $L$, we cannot obtain a good lower bound on 
 $\theta_{k}$ because we cannot precisely calculate  the dimension of
 $C^{n}(L\cap H,\rb)/ (C^{n}(L\cap H,\rb)\cap \ker \delta_{H})$, which
 is necessary to establish  the equivalence (\ref{lemma projection equivalence}).
 \end{rem}
 The following lemmas characterize the operator $\pi^{*}\pi$.
 \begin{lem}
 \label{lemma projection inequality}
The eigenvalues of the operator $\pi^{*}\pi: C^{n+1}(K,\rb)\rightarrow C^{n+1}(K,\rb)$ are $\{\frac{w_{L}(\bar{F})}{w_{K}(\bar{F})}\mid  \bar{F}\in S_{n+1}(K)\}$,  hence
  $\mathcal{R}_{\pi^{*}\pi}(f)\leq  1$.
 \end{lem}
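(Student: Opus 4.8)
The plan is to diagonalize $\pi^{*}\pi$ explicitly in the basis of elementary cochains and then read off the spectrum, after which the bound on the Rayleigh quotient is immediate. First I would invoke the computation (\ref{projection adjoint}), which shows that $\pi^{*}\pi$ acts on each elementary cochain by
$$\pi^{*}\pi\, e_{[\bar{F}]}=\frac{w_{L}(\bar{F})}{w_{K}(\bar{F})}\,e_{[\bar{F}]},\qquad \bar{F}\in S_{n+1}(K),$$
so that $\pi^{*}\pi$ is diagonal with respect to $\{e_{[\bar{F}]}\mid \bar{F}\in S_{n+1}(K)\}$. Since the weight function $w_{K}$ renders the elementary cochains an orthogonal basis of $C^{n+1}(K,\rb)$, these cochains form a complete system of eigenvectors, and hence the eigenvalues of $\pi^{*}\pi$ are exactly the ratios $\tfrac{w_{L}(\bar{F})}{w_{K}(\bar{F})}$, one for each $(n+1)$-simplex $\bar{F}$. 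This establishes the first assertion of the lemma.

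For the bound on the Rayleigh quotient I would use the defining relation $w_{L}=w_{K}-w_{H}$ of the proper difference together with the non-negativity of the weights. Because $(H,w_{H})$ is a subcomplex and the difference is required to have non-negative weights, we have $0\leq w_{L}(\bar{F})\leq w_{K}(\bar{F})$ for every $\bar{F}\in S_{n+1}(K)$, whence every eigenvalue $\tfrac{w_{L}(\bar{F})}{w_{K}(\bar{F})}$ lies in $[0,1]$. Since $\pi^{*}\pi$ is self-adjoint, its spectral decomposition yields $(f,\pi^{*}\pi f)=\sum_{\bar{F}}\tfrac{w_{L}(\bar{F})}{w_{K}(\bar{F})}(f,e_{[\bar{F}]})^{2}\leq \sum_{\bar{F}}(f,e_{[\bar{F}]})^{2}=(f,f)$, which is exactly the statement that $\mathcal{R}_{\pi^{*}\pi}(f)\leq 1$; equivalently one may simply appeal to Theorem \ref{variational theorem} to bound the Rayleigh quotient above by the largest eigenvalue.

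There is essentially no genuine obstacle here, as the computation (\ref{projection adjoint}) does all the work; the only point requiring care is the degenerate case. When $w_{L}(\bar{F})=0$ for some $(n+1)$-simplex, the induced inner product on $C^{n+1}(L,\rb)$ is degenerate and the corresponding eigenvalue of $\pi^{*}\pi$ equals $0$. The argument still goes through, provided one adopts the convention of Remark \ref{remark convention} that $\mathcal{R}_{A}(f)=0$ whenever the denominator of the Rayleigh quotient vanishes, so that the quotient remains well defined and the upper bound by $1$ is preserved.
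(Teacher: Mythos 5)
Your proof is correct and takes essentially the same approach as the paper, whose entire proof is the one-line remark that the lemma is "a direct consequence of (\ref{projection adjoint})" — you simply spell out that consequence, including the non-negativity bound $0\leq w_{L}(\bar{F})\leq w_{K}(\bar{F})$ and the degenerate case. One minor slip: in your spectral-decomposition display the elementary cochains are orthogonal but not orthonormal under the weighted inner product, so the correct identity is $(f,\pi^{*}\pi f)=\sum_{\bar{F}}w_{L}(\bar{F})f([\bar{F}])^{2}\leq\sum_{\bar{F}}w_{K}(\bar{F})f([\bar{F}])^{2}=(f,f)$; your alternative appeal to Theorem \ref{variational theorem} sidesteps this entirely.
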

 \begin{proof}
 The proof is a direct consequence of (\ref{projection adjoint}).
 \end{proof}
 \begin{lem}
 \label{lemma projection}
 The following equivalence holds
 \begin{equation}
 \label{lemma projection equivalence}
 \mathcal{R}_{\pi^{*}\pi}(\delta_{K}ig)=1 \textrm{ iff } g_{\lvert C^{n}(H,\rb)}\in \ker \delta_{H}.
 \end{equation}
The set of functions $g\in C^{n}(L,\rb)$ satisfying (\ref{lemma projection equivalence}) is a vector space orthogonal to 
  $\mathcal{W}:= \coker \delta_{H}$, and the dimension of $\mathcal{W}$
  is  $D_{\mathcal{W}}= \dim C^{n+1}(H,\rb)-\dim H^{n+1}( H,\rb)$.
 \end{lem}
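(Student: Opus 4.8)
The plan is to make the Rayleigh quotient $\mathcal{R}_{\pi^{*}\pi}(\delta_{K}ig)$ completely explicit, read off its equality case, and then reinterpret the resulting vanishing condition homologically. First I would abbreviate $h:=\delta_{K}ig\in C^{n+1}(K,\rb)$ and insert (\ref{projection adjoint}) into the weighted scalar product on $C^{n+1}(K,\rb)$ to obtain
\[
\mathcal{R}_{\pi^{*}\pi}(h)=\frac{(\pi^{*}\pi h,h)}{(h,h)}=\frac{\sum_{\bar{F}\in S_{n+1}(K)}w_{L}(\bar{F})\,h([\bar{F}])^{2}}{\sum_{\bar{F}\in S_{n+1}(K)}w_{K}(\bar{F})\,h([\bar{F}])^{2}}.
\]
Writing $w_{K}=w_{L}+w_{H}$ with $w_{H}\ge 0$ (extended by $0$ off $H$, so that $w_{H}(\bar{F})>0$ exactly for $\bar{F}\in S_{n+1}(H)$), this equals $1-\frac{\sum_{\bar{F}}w_{H}(\bar{F})h([\bar{F}])^{2}}{\sum_{\bar{F}}w_{K}(\bar{F})h([\bar{F}])^{2}}$, and since every summand of the subtracted term is non-negative, $\mathcal{R}_{\pi^{*}\pi}(h)=1$ holds precisely when $h([\bar{F}])=0$ for every $\bar{F}\in S_{n+1}(H)$, the case $h=0$ being covered by the convention of Remark \ref{remark convention}.

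The second step is to recognise this vanishing as a statement about $\delta_{H}$. For $\bar{F}\in S_{n+1}(H)$ every codimension-one face of $\bar{F}$ again lies in $H$, because $H$ is closed under inclusion, and $i$ leaves values unchanged; hence $h([\bar{F}])=(\delta_{K}ig)([\bar{F}])=\sum_{j}(-1)^{j}g([F_{j}])=\big(\delta_{H}(g_{\lvert C^{n}(H,\rb)})\big)([\bar{F}])$, where the $F_{j}$ range over $\partial\bar{F}$. Therefore ``$h$ vanishes on all of $S_{n+1}(H)$'' is literally ``$\delta_{H}(g_{\lvert C^{n}(H,\rb)})=0$'', which is exactly (\ref{lemma projection equivalence}).

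For the final claim I would introduce the restriction map $r:C^{n}(L,\rb)\to C^{n}(H,\rb)$, $rg=g_{\lvert C^{n}(H,\rb)}$, so that the solution set is $\ker(\delta_{H}\circ r)$ and is manifestly a linear subspace. Since $L\equiv K\supseteq H$, the map $r$ is surjective (extend any cochain on $H$ by zero), hence $\im(\delta_{H}\circ r)=\im\delta_{H}$ and the codimension of $\ker(\delta_{H}\circ r)$ equals $\rank\delta_{H}$. Letting $\mathcal{W}$ be the orthogonal complement of this subspace then gives $\dim\mathcal{W}=\rank\delta_{H}$; and because $H$ is $(n+1)$-dimensional we have $C^{n+2}(H,\rb)=0$, so $H^{n+1}(H,\rb)=\coker\delta_{H}$, whence $\rank\delta_{H}=\dim C^{n+1}(H,\rb)-\dim H^{n+1}(H,\rb)=D_{\mathcal{W}}$. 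The identification of $\mathcal{W}$ with the space named in the statement uses the isomorphism between the coimage $C^{n}(H,\rb)/\ker\delta_{H}$ and $\im\delta_{H}$.

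The steps are individually routine; the points that demand care are (i) that the scalar product on $C^{n+1}(L,\rb)$ becomes degenerate when some $(n+1)$-simplices are deleted outright, so every Rayleigh-quotient manipulation and the notion of orthogonal complement must be read through Remark \ref{remark convention}; (ii) that the equality case $\mathcal{R}_{\pi^{*}\pi}=1$ is controlled by the positively weighted faces of $H$, which under the subcomplex convention ($w_{H}>0$ on $H$) are exactly $S_{n+1}(H)$; and (iii) keeping the bookkeeping between $\coker\delta_{H}$ and the coimage consistent, since it is precisely here that the identity $D_{\mathcal{W}}=\dim C^{n+1}(H,\rb)-\dim H^{n+1}(H,\rb)$ uses the top-dimensionality of $H$.
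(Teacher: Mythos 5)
Your proposal is correct and follows essentially the same route as the paper's own proof: write the Rayleigh quotient explicitly in the weighted inner products, observe that equality holds exactly when $\delta_{K}ig$ vanishes on the positively weighted $(n+1)$-faces of $H$, identify this condition with $g_{\lvert C^{n}(H,\rb)}\in \ker \delta_{H}$, and compute $D_{\mathcal{W}}=\rank \delta_{H}=\dim C^{n+1}(H,\rb)-\dim H^{n+1}(H,\rb)$ using that $H$ is $(n+1)$-dimensional. Your rewriting of the quotient as $1$ minus a non-negative ratio (which yields both directions of the equivalence at once) and your rank--nullity count via the restriction map are just cleaner phrasings of the paper's equality-case analysis and its split short exact sequences.
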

    \begin{proof}
Let $\bar{f}:=\delta_{K}ig$, then according to (\ref{projection adjoint}) we get
\begin{equation}
\label{eq1}
\mathcal{R}_{\pi^{*}\pi}(\bar{f})=\frac{\sum_{\bar{F}\in S_{n+1}(L)}w_{L}(\bar{F})\bar{f}([\bar{F}])^{2} }{\sum_{\bar{F}\in S_{n+1}(K)}w_{K}(\bar{F})\bar{f}([\bar{F}])^{2}}.
\end{equation}
Thus, (\ref{eq1}) is equal to $1$ iff  
\begin{equation}
\label{equation projection 1}
w_{K}(\bar{F})\bar{f}([\bar{F}])^{2}=(w_{K}(\bar{F})-w_{H}(\bar{F}))\bar{f}([\bar{F}])^{2},
\end{equation} 
for every  $\bar{F}\in S_{n+1}(K)$.
The relation (\ref{equation projection 1}) holds if 
$\bar{f}$ is identically equal to zero on the subcomplex $H$, i.e., if the restriction of $g\in C^{n}(L,\rb)$ on $C^{n}(H,\rb)$ is in the kernel of 
 $\delta_{H}$.
Therefore,  the function $g$ is orthogonal to   $\mathcal{W}:= \coker \delta_{H}$.
For the dimension of $\mathcal{W}$, we have the following short exact
sequences that split, 
$0\rightarrow \ker \delta_{n}\rightarrow C^{n}\rightarrow \im \delta_{n}\rightarrow 0$, $0\rightarrow \im \delta_{n-1}\rightarrow  \ker \delta_{n} \rightarrow H^{n}\rightarrow 0$, and deduce
\begin{equation}
\dim \coker \delta_{n}=\dim\im\delta_{n}=\dim \ker \delta_{n+1}-\dim H^{n+1}.
\end{equation}
According to Remark \ref{Remark H},  $H$ and $K$ are  $(n+1)$-dimensional, therefore 
$\ker \delta_{n+1}=C^{n+1}(H,\rb)$ and
\begin{equation}
\dim\im\delta_{H}= \dim C^{n+1}( H,\rb)-\dim H^{n+1}( H,\rb).
\end{equation}
The other direction in  the equivalence (\ref{lemma projection equivalence}) is trivial.
\end{proof}
The eigenvalues of the operator $i^{*}i$ are characterized by the following lemmas.
 \begin{lem}
 \label{lemma inclusion}
Let $g\in C^{n}(L,\rb)$. Then, the following implications are true
\begin{itemize}
\item [(i)]$(g,g)>0\Rightarrow\mathcal{R}_{i^{*}i}(g)\geq 1$.
\item [(ii)]$(g,g)=0\Rightarrow\mathcal{R}_{\pi^{*}\pi}(\delta_{K}ig)=0$.
\end{itemize}
 \end{lem}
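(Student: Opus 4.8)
The plan is to prove both implications by expanding the relevant Rayleigh quotients directly in terms of the weight functions, using the explicit formulas (\ref{projection adjoint}) and (\ref{inclusion adjoint}) together with the weighted inner products on the cochain spaces. The only structural input beyond routine computation will be the defining property of a proper difference, namely that $L'=\{F\in K\mid w_L(F)>0\}$ is a genuine simplicial complex.

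For (i), I would compute numerator and denominator of $\mathcal{R}_{i^{*}i}(g)$ separately. Using the weighted scalar product on $C^{n}(L,\rb)$ and formula (\ref{inclusion adjoint}), a short calculation gives
\[
(i^{*}i g,g)=\sum_{G:\, w_L(G)>0} w_K(G)\, g([G])^{2},\qquad (g,g)=\sum_{G:\, w_L(G)>0} w_L(G)\, g([G])^{2},
\]
where the terms with $w_L(G)=0$ drop out of both sums. Since $w_L=w_K-w_H$ with $w_H\geq 0$, we have $w_K(G)\geq w_L(G)$ for every face $G$, so the numerator dominates the denominator term by term. Provided $(g,g)>0$ the denominator is positive, and the quotient is therefore at least $1$, which is exactly (i).

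For (ii), I would start from $(g,g)=0$. Since $(g,g)=\sum_{w_L(G)>0} w_L(G)\, g([G])^{2}$ is a sum of non-negative terms, it vanishes only if $g([G])=0$ for every $G$ with $w_L(G)>0$. Writing $\bar f:=\delta_{K}ig$ and invoking (\ref{eq1}), the Rayleigh quotient $\mathcal{R}_{\pi^{*}\pi}(\bar f)$ has numerator $\sum_{\bar F\in S_{n+1}(L)} w_L(\bar F)\,\bar f([\bar F])^{2}$, so it suffices to show that $\bar f([\bar F])=0$ whenever $w_L(\bar F)>0$.

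The crucial step is here, and it is the only place where the hypothesis that $(L,w_L)$ is a \emph{proper difference} is used: by Definition \ref{Difference} the set $L'$ is a simplicial complex, hence closed under taking faces. Consequently, if $w_L(\bar F)>0$ then every $G\in\partial\bar F$ also satisfies $w_L(G)>0$, whence $g([G])=0$ for all such $G$. Expanding $\bar f([\bar F])=(\delta_{K}ig)([\bar F])=(ig)(\partial[\bar F])$ as the signed sum over the boundary faces $G\in\partial\bar F$ of the values $g([G])$, every summand vanishes, so $\bar f([\bar F])=0$. Thus the numerator of $\mathcal{R}_{\pi^{*}\pi}(\bar f)$ is zero; if its denominator is positive the quotient equals $0$, and if the denominator also vanishes the convention of Remark \ref{remark convention} yields the same value, establishing (ii). I expect the only subtle point to be this propagation of the vanishing of $g$ through the coboundary, which rests precisely on the closure-under-faces property of $L'$ rather than on any cancellation in the alternating sum.
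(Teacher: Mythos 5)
Your proof is correct and takes essentially the same route as the paper: part (i) is the identical term-by-term comparison of $\sum w_{K}(G)g([G])^{2}$ against $\sum w_{L}(G)g([G])^{2}$ using $w_{K}\geq w_{L}$, and part (ii) is what the paper compresses into ``a direct consequence of (\ref{eq1})''. The only difference is that you make explicit the step the paper leaves implicit — that the closure of $L'$ under taking faces forces $(\delta_{K}ig)([\bar{F}])=0$ for every $\bar{F}$ with $w_{L}(\bar{F})>0$, so the numerator of (\ref{eq1}) vanishes — which is a worthwhile clarification, since that is precisely where the proper-difference hypothesis enters.
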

 \begin{proof}
 From  (\ref{inclusion adjoint}), it  follows that
 \begin{equation}
 \label{Rayleigh inclusion}
\frac{\sum_{G\in S_{n}(L)}w_{K}(G)g([G])^{2}}{\sum_{G\in S_{n}(L)}w_{L}(G)g([G])^{2}}.
\end{equation}
 Since
 $w_{K}(G)\geq w_{L}(G)$, then $(i)$ holds.
 Implication $(ii)$ is a direct consequence of  (\ref{eq1}).
 \end{proof}

  \begin{lem}
  \label{lemma inclusion supprot}
  Let $g\in C^{n}(L,\rb)$, then 
  the following equivalence holds.
  \begin{equation}
  \label{eq2}
  \mathcal{R}_{i^{*}i}(g)=1 \textrm{ iff } g\perp C^{n}(H,\rb)
  \end{equation}
  \end{lem}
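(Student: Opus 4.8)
The plan is to evaluate the Rayleigh quotient $\mathcal{R}_{i^{*}i}(g)$ explicitly, read off exactly when it equals $1$, and then check that this coincides with the orthogonality condition. First I would start from (\ref{Rayleigh inclusion}) and substitute the defining relation of the proper difference, $w_{K}(G)=w_{L}(G)+w_{H}(G)$ from Definition \ref{Difference}. Keeping in mind, via Remark \ref{remark convention}, that only faces with $w_{L}(G)>0$ contribute to either sum, this gives
$$\mathcal{R}_{i^{*}i}(g)=\frac{\sum_{G}\big(w_{L}(G)+w_{H}(G)\big)g([G])^{2}}{\sum_{G}w_{L}(G)g([G])^{2}}=1+\frac{\sum_{G}w_{H}(G)g([G])^{2}}{\sum_{G}w_{L}(G)g([G])^{2}},$$
where both sums range over $\{G\in S_{n}(L)\mid w_{L}(G)>0\}$. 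This recovers the inequality $\mathcal{R}_{i^{*}i}(g)\geq 1$ of Lemma \ref{lemma inclusion}(i) and shows that equality holds iff the numerator of the second term vanishes.

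Next, since $w_{H}(G)g([G])^{2}\geq 0$ termwise, the vanishing of $\sum_{G}w_{H}(G)g([G])^{2}$ is equivalent to $w_{H}(G)g([G])^{2}=0$ for every contributing face. Because $w_{H}$ is a genuine (positive) weight function on $H$, we have $w_{H}(G)>0$ exactly for $G\in S_{n}(H)$; hence this condition reads: $g([G])=0$ for every $G\in S_{n}(H)$ with $w_{L}(G)>0$.

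It then remains to identify this with $g\perp C^{n}(H,\rb)$. Regarding $C^{n}(H,\rb)$ as the subspace of $C^{n}(L,\rb)$ spanned by the elementary cochains $e_{[G]}$ with $G\in S_{n}(H)$, extended by zero, and testing against each such cochain in the $w_{L}$-inner product, one finds $(g,e_{[G]})_{C^{n}(L)}=w_{L}(G)g([G])$. Thus $g\perp C^{n}(H,\rb)$ is equivalent to $w_{L}(G)g([G])=0$ for all $G\in S_{n}(H)$, that is, again $g([G])=0$ for every $G\in S_{n}(H)$ with $w_{L}(G)>0$. Comparing with the condition from the previous step yields the asserted equivalence.

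The step I expect to be most delicate is the bookkeeping around the degenerate weight $w_{L}$: I must make sure that the set of faces on which $g$ is forced to vanish is literally the same on the two sides of the claimed equivalence. The crucial points are that faces with $w_{L}(G)=0$ impose no constraint on either side, since they drop out of the Rayleigh sums by Remark \ref{remark convention} and are automatically $w_{L}$-orthogonal, and that $w_{H}$ is strictly positive on all of $S_{n}(H)$, so that the factor $w_{H}(G)$ may be cancelled exactly where it should be. Once these supports are matched, both conditions collapse to the single requirement that $g$ vanish on every $n$-face of $H$ carrying positive $w_{L}$-weight, and the lemma follows.
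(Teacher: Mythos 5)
Your proof is correct and takes essentially the same route as the paper: the paper's own proof is the one-line observation that, by (\ref{Rayleigh inclusion}), $\mathcal{R}_{i^{*}i}(g)=1$ iff $g([G])=0$ for every $G\in S_{n}(H)$, which is exactly the computation you carry out in detail. Your write-up additionally makes explicit the degenerate-weight bookkeeping (faces $G\in S_{n}(H)$ with $w_{L}(G)=0$ constrain neither side, since they drop out of the Rayleigh sums and are automatically $w_{L}$-orthogonal), a point the paper glosses over but which is needed for the two sides of the equivalence to match literally.
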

\begin{proof}
Due to (\ref{Rayleigh inclusion}), we have  $\mathcal{R}_{i^{*}i}(g)=1$ iff $g([G])=0$ for every $G\in S_{n}(H)$.
\end{proof}
\begin{proof}[Proof of Theorem \ref{main theorem}]
Denote the dimension of the $n$-th cochain group  $C^{n}(K,\rb) $ by $N$ and  
let $\lambda_{1}\leq \lambda_{2}\leq\ldots \leq \lambda_{N}$ and  $\theta_{1}\leq \theta_{2}\leq\ldots \leq \theta_{N}$ be the  eigenvalues of the operators $\mathcal{L}^{up}_{n}(K)$ and  $\mathcal{L}^{up}_{n}(L)$,  respectively. Then, 
\begin{align}
\theta_{k}=&\min_{\mathcal{V}_{k}}\max_{g\in \mathcal{V}_{k}}  \mathcal{R}_{\mathcal{L}_{n}^{up}(L)}(g)\notag\\
=&\min_{\mathcal{V}_{k}}\max_{g\in \mathcal{V}_{k}}  \mathcal{R}_{\pi^{*}\pi}(\delta_{K}ig)   \mathcal{R}_{\mathcal{L}_{n}^{up}(K)}(ig)
     \mathcal{R}_{i^{*}i}(g)\label{eq01a}\\
\geq &\min_{\mathcal{V}_{k}}\max_{g\in \mathcal{V}_{k}}  \mathcal{R}_{\pi^{*}\pi}(\delta_{K}ig)   \mathcal{R}_{\mathcal{L}_{n}^{up}(K)}(ig)\label{ineq01}\\
\geq &\min_{\mathcal{V}_{k}}\max_{g\in \mathcal{V}_{k},g\perp \mathcal{W}} \mathcal{R}_{\mathcal{L}_{n}^{up}(K)}(ig)\label{ineq02}\\
\geq & \min_{\mathcal{V}_{k}}\max_{g\in \mathcal{V}_{k},g\perp \mathcal{V}_{D_{\mathcal{W}}}} \mathcal{R}_{\mathcal{L}_{n}^{up}(K)}(ig)\label{ineq03}.
\end{align}
(\ref{eq01a}) comes from (\ref{decomposition}). (\ref{ineq01})  is a
consequence of Lemma \ref{lemma inclusion}. (\ref{ineq02})
follows from Lemma \ref{lemma projection} and the fact that we are
taking the maximum over a smaller set, whereas 
(\ref{ineq03})  follows since we are performing a wider minimization than in (\ref{ineq02}).
Due to  Lemma \ref{lemma minmax}, from   (\ref{ineq03}) we obtain
\begin{align}
\theta_{k}&\geq \min_{\mathcal{V}_{k}}\max_{g\in \mathcal{V}_{k-D_{\mathcal{W}}}} \mathcal{R}_{\mathcal{L}_{n}^{up}(K)}(ig)\notag\\ 
&=   \min_{\mathcal{V}_{k-D_{\mathcal{W}}}}\max_{g\in \mathcal{V}_{k-D_{\mathcal{W}}}} \mathcal{R}_{\mathcal{L}_{n}^{up}(K)}(ig)=\lambda_{k-D_{\mathcal{W}}}\notag .
\end{align}
Therefore,
\begin{equation}
\label{lower inequality}
\theta_{k}\geq \lambda_{k-D_{\mathcal{W}}}.
\end{equation}
We derive an upper interlacing inequality  for  $\theta_{k}$  from (\ref{decomposition}) and Theorem \ref{min-max theorem}, as well, i.e.,
\begin{align}
\theta_{k}&=\max_{\mathcal{V}_{N-k+1}}\min_{f\in \mathcal{V}_{N-k+1}}  \mathcal{R}_{\mathcal{L}_{n}^{up}(L)}(f)\notag\\
&=\max_{\mathcal{V}_{N-k+1}}\min_{f\in \mathcal{V}_{N-k+1}}  \mathcal{R}_{\pi^{*}\pi}(\delta_{K}if)   \mathcal{R}_{\mathcal{L}_{n}^{up}(K)}(if)
     \mathcal{R}_{i^{*}i}(f).\notag
 \end{align}
 Since      $ \mathcal{R}_{\pi^{*}\pi}(\delta_{K}ig) \leq 1$ , we have
\begin{align}
\theta_{k} \leq & \max_{\mathcal{V}_{N-k+1}}\min_{g\in \mathcal{V}_{N-k+1}}   \mathcal{R}_{\Delta_{K}}(ig) \mathcal{R}_{i^{*}i}(g) \label{ineqZX}\\
 \leq & \max_{\mathcal{V}_{N-k+1}}\min_{g\in \mathcal{V}_{N-k+1}, g\perp C^{n}(H,\rb)}   \mathcal{R}_{\Delta_{K}}(ig)\label{ineq04Z}\\
 =& \max_{\mathcal{V}_{N-k+1}}\min_{g\in \mathcal{V}_{N-k+1-D_{H}}}   \mathcal{R}_{\Delta_{K}}(ig)\label{ineq05}\\
=& \max_{\mathcal{V}_{N-k+1-D_{H}}}\min_{g\in \mathcal{V}_{N-k+1-D_{H}}}   \mathcal{R}_{\Delta_{K}}(ig)\label{ineq06}\\
=&\lambda_{k+D_{H}}.
\end{align} 
(\ref{ineq04Z}) follows from Lemmas \ref{lemma inclusion} (i) and  \ref{lemma inclusion supprot} and the fact that
we are minimizing over a subset of the set $\mathcal{V}_{N-k+1}$
occuring in (\ref{ineqZX}), whereas     (\ref{ineq06}) is a consequence of Lemma \ref{lemma minmax}.\\
Together with (\ref{lower inequality}) this proves Theorem \ref{main theorem}.
\end{proof}
\begin{rem}
Other than the requirement that the proper difference of the complexes $K$ and $H$ must exist,  no restrictions  on weight functions nor on simplicial complexes are imposed. Thus, the interlacing theorem Theorem \ref{main theorem} holds for the generalized Laplace operator.
Note that, unlike in \cite{Butler},  we allow $H$ to contain isolated $n$-simplices (isolated vertices in the case of graphs).
\end{rem}

The spectrum of the combinatorial Laplace operator $L^{up}_{i}(K)$ is bounded from above by the number of vertices of the complex $K$ (for details, see Proposition 4.2. \cite{DuvalShifted});  hence, we have the following version of Theorem \ref{main theorem}.
\begin{thm}
\label{main theorem combinatorial Laplacian}
Let $(K,w_{K}\equiv 1)$  be an  $(n+1)$-dimensional simplicial complex on a vertex set of cardinality $\lvert V\lvert$, and let $(H,w_{H}\equiv 1)$  be its $(n+1)$-subcomplex, such that 
their proper difference $(L,w_{L}):=(K-H,w_{K}-w_{L})$ exists.
Let $\lambda_{1}\leq \lambda_{2}\leq \ldots \leq \lambda_{N}$  and $\theta_{1}\leq \theta_{2}\leq \ldots \leq \theta_{N}$ 
 be the eigenvalues of $L^{up}_{n}(K)$ and $L^{up}_{n}(L)$ respectively. Then,  for all $k = 1,2,\ldots, N$, we have
 $$\lambda_{k-D_{\mathcal{W}}}\leq \theta_{k}\leq \lambda_{k+D_{H}} ,$$
 where 
 $D_{\mathcal{W}}=\dim C^{n+1}( H,\rb)-\dim H^{n+1}( H,\rb)$,  $0=\lambda_{1-D_{\mathcal{W}}}=\ldots=\lambda_{0} $,
 and $\lvert V\lvert =\lambda_{N}+1=\ldots=\lambda_{N}+D_{H}$.
\end{thm}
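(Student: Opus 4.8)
The plan is to read this statement as the constant-weight ($w_K \equiv w_H \equiv 1$) specialization of Theorem \ref{main theorem}, the sole extra ingredient being the a priori bound $\lvert V\rvert$ on the combinatorial spectrum, which is what lets us give meaning to the padded eigenvalues. Because a proper difference has $L \equiv K$ as a simplicial complex, both $L^{up}_n(K)$ and $L^{up}_n(L)$ act on the same $N$-dimensional space $C^n(K,\rb)$, and the hypotheses of Theorem \ref{main theorem} hold with these weights. Consequently the lower inequality $\lambda_{k-D_{\mathcal{W}}} \leq \theta_k$ transfers verbatim for every $k = 1, \ldots, N$, together with its convention $0 = \lambda_{1-D_{\mathcal{W}}} = \cdots = \lambda_0$; no further work is needed on that side.

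For the upper inequality I would split on the index $k$. Theorem \ref{main theorem} already yields $\theta_k \leq \lambda_{k+D_H}$ for $k = 1, \ldots, N - D_H$, i.e.\ whenever $k + D_H \leq N$, so it remains to treat $k = N - D_H + 1, \ldots, N$. For these indices $\lambda_{k+D_H}$ lies outside the genuine spectrum and is to be interpreted through the padding $\lambda_{N+1} = \cdots = \lambda_{N+D_H} = \lvert V\rvert$; the assertion then collapses to the uniform estimate $\theta_k \leq \lvert V\rvert$. Thus everything reduces to showing that every eigenvalue of $L^{up}_n(L)$ is at most $\lvert V\rvert$.

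To establish $\theta_N \leq \lvert V\rvert$ I would invoke the bound recalled just before the theorem (Proposition 4.2 of \cite{DuvalShifted}): the spectrum of a combinatorial up-Laplacian never exceeds the number of vertices of its complex. The point needing care is that $(L,w_L)$ is degenerate, with $w_L = 1$ on the faces of $K - H$ and $w_L = 0$ on the faces of $H$. Appealing to the construction of the formal adjoint in (\ref{formal adjoint}) and the revised Hodge statement in the remark that follows it, the nonzero part of the spectrum of the degenerate operator $L^{up}_n(L)$ agrees with the spectrum of the honest combinatorial up-Laplacian of the nondegenerate complex $L' = K - H$, while the extra kernel contributed by the weight-zero simplices supplies only zero eigenvalues. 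Since the vertex set of $L'$ is contained in $V$, that honest spectrum is bounded by $\lvert V\rvert$, and the zero eigenvalues trivially satisfy the same bound; hence $\theta_N \leq \lvert V\rvert$. Combining this with the two conclusions of Theorem \ref{main theorem} gives $\lambda_{k-D_{\mathcal{W}}} \leq \theta_k \leq \lambda_{k+D_H}$ for all $k = 1, \ldots, N$.

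The main obstacle is precisely this last reduction: one must check carefully that discarding the weight-zero simplices of $L$ leaves the nonzero spectrum unchanged, so that Proposition 4.2 of \cite{DuvalShifted}, which is stated for the nondegenerate combinatorial Laplacian, may legitimately be applied to $L'$ and transported back to $L$. Once that is in hand, the remainder is a direct transcription of Theorem \ref{main theorem} together with the eigenvalue-padding convention.
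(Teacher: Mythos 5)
Your proposal is correct and takes essentially the same route as the paper: the paper obtains this theorem immediately by specializing Theorem \ref{main theorem} to constant weights and invoking the bound of Proposition 4.2 of \cite{DuvalShifted} (the combinatorial up-Laplacian spectrum never exceeds the number of vertices) to give meaning to the padded eigenvalues $\lambda_{N+1}=\ldots=\lambda_{N+D_{H}}=\lvert V\rvert$, exactly as you do. The only difference is that you make explicit the reduction from the degenerate complex $L$ to its nondegenerate part $L'$ (block structure of $L^{up}_{n}(L)$, zero eigenvalues from weight-zero simplices), a verification the paper leaves implicit but which is indeed needed and holds because the proper-difference condition forces $L'$ to be closed under inclusion.
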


The eigenvalues of  the normalized combinatorial Laplacian $\Delta_{n}^{up}$ are bounded by $n+2$ (see \cite{HorakNCL}). Hence, we have the following theorem.
\begin{thm}
\label{main theorem normalized Laplacian}
Let $(K,w_{K})$ and $(H,w_{H})$  be  $(n+1)$-dimensional, weighted simplicial complexes such that 
their proper difference $(L,w_{L}):=(K-H,w_{K}-w_{L})$ exists.
Let $\lambda_{1}\leq \lambda_{2}\leq \ldots \leq \lambda_{N}$  and $\theta_{1}\leq \theta_{2}\leq \ldots \leq \theta_{N}$ 
 be the eigenvalues of $\Delta^{up}_{n}(K)$ and $\Delta^{up}_{n}(L)$, respectively. Then, for all $k = 1,2,\ldots, N$, we have
 $$\lambda_{k-D_{\mathcal{W}}}\leq \theta_{k}\leq \lambda_{k+D_{H}} ,$$
 where 
 $D_{\mathcal{W}}=\dim C^{n+1}( H,\rb)-\dim H^{n+1}( H,\rb)$,  $0=\lambda_{1-D_{\mathcal{W}}}=\ldots=\lambda_{0} $,
 and $n+2=\lambda_{N}+1=\ldots=\lambda_{N+D_{H}}$.
\end{thm}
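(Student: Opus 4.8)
The plan is to read this statement as a direct specialization of Theorem \ref{main theorem} to the normalized up-Laplace operator, the only additional ingredient being the a priori spectral bound that every eigenvalue of a normalized up-Laplacian $\Delta_n^{up}$ lies in $[0, n+2]$, as established in \cite{HorakNCL}. The hypotheses imposed here (existence of the proper difference $(L,w_L)$, both complexes $(n+1)$-dimensional) are exactly those of Theorem \ref{main theorem}, so its entire variational argument carries over verbatim with the generic operator $\mathcal{L}_n^{up}$ replaced by $\Delta_n^{up}$. In particular the lower inequality requires nothing new: Theorem \ref{main theorem} already yields $\lambda_{k-D_{\mathcal{W}}} \leq \theta_k$ for all $k = 1, \ldots, N$ under the same convention $0 = \lambda_{1-D_{\mathcal{W}}} = \cdots = \lambda_0$, which is precisely the left-hand inequality claimed here.

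The only work lies in extending the upper inequality to the whole range of indices. Theorem \ref{main theorem} supplies $\theta_k \leq \lambda_{k+D_H}$ only for $k = 1, \ldots, N - D_H$, since for larger $k$ the index $k + D_H$ exceeds $N$ and no eigenvalue $\lambda_{k+D_H}$ of $\Delta_n^{up}(K)$ is defined. To handle the remaining indices $k = N - D_H + 1, \ldots, N$ I would invoke the uniform bound $n+2$: adopting the stated convention $\lambda_{N+1} = \cdots = \lambda_{N+D_H} = n+2$, each such $k$ gives $k + D_H \in \{N+1, \ldots, N+D_H\}$, and since $\theta_k \leq n+2$ one concludes $\theta_k \leq n+2 = \lambda_{k+D_H}$, so the upper inequality holds for every $k$.

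I expect no genuine analytic obstacle; the argument is index bookkeeping at the top of the spectrum, entirely parallel to the passage from Theorem \ref{main theorem} to Theorem \ref{main theorem combinatorial Laplacian}, where the vertex count $|V|$ plays the role that $n+2$ plays here. The single point that warrants care is that the bound $n+2$ genuinely applies to the $\theta_k$ and not merely to the $\lambda_k$, i.e.\ that $\Delta_n^{up}(L)$ is itself a \emph{normalized} up-Laplacian. This holds because the normalizing condition (\ref{normalizing condition}) is linear in the weight function, so $w_L = w_K - w_H$ inherits it from $w_K$ and $w_H$; hence the estimate of \cite{HorakNCL} applies to the weighted $(n+1)$-complex $(L, w_L)$ exactly as it does to $(K, w_K)$, and the bound transfers to the $\theta_k$.
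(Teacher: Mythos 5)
Your proposal is correct and matches the paper's own (very brief) argument: the paper derives this theorem from Theorem \ref{main theorem} plus the bound $\theta_k \le n+2$ for the normalized up-Laplacian from \cite{HorakNCL}, exactly the index bookkeeping you describe. Your added check that $w_L = w_K - w_H$ inherits the normalizing condition by linearity is a point the paper leaves implicit, and it is a worthwhile observation rather than a deviation.
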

As a special case of Theorem \ref{main theorem normalized Laplacian} we derive the following corollary on interlacing  for the normalized graph Laplacian.
\begin{coll}
\label{Corollary graph interlacing}
Let $(K,w_{K})$ and $(H,w_{H})$  be  weighted  graphs, and let $(L,w_{L}):=(K-H,w_{K}-w_{L})$ be their proper difference.
Let $\lambda_{1}\leq \lambda_{2}\leq \ldots \leq \lambda_{N}$  and $\theta_{1}\leq \theta_{2}\leq \ldots \leq \theta_{N}$ 
 be the eigenvalues of $\Delta^{up}_{0}(K)$ and $\Delta^{up}_{0}(L)$ respectively, then for all $k = 1,2,\ldots, N$ we have
 \begin{equation}
 \label{Horak interlacing}
 \lambda_{k-\dim C^{0}(H,\rb)+\dim H^{0}(H,\rb) }\leq \theta_{k}\leq \lambda_{k+\dim C^{0}(H,\rb)} ,
 \end{equation}
 where  $0=\lambda_{1- \dim C^{0}(H,\rb)+\dim H^{0}(H,\rb) }=\ldots=\lambda_{0} $, and  $2=\lambda_{N+1}=\ldots=\lambda_{N+\dim C^{0}(H,\rb)}$.
\end{coll}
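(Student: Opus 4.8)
The plan is to obtain the corollary by specializing Theorem \ref{main theorem normalized Laplacian} to graphs. Since a weighted graph is exactly a $1$-dimensional weighted simplicial complex, I would set $n=0$ and apply the theorem to the $(n+1)=1$-dimensional complexes $(K,w_K)$ and $(H,w_H)$ with proper difference $(L,w_L)$; the operators in question are then $\Delta_0^{up}(K)$ and $\Delta_0^{up}(L)$, as required.

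For the upper interlacing inequality, the theorem gives $\theta_k \le \lambda_{k+D_H}$ with $D_H = \dim C^n(H,\rb) = \dim C^0(H,\rb)$, matching (\ref{Horak interlacing}); and since $n+2 = 2$, the spectral cap becomes $2 = \lambda_{N+1} = \ldots = \lambda_{N+\dim C^0(H,\rb)}$, again as stated.

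The only step needing an argument is the lower inequality. Theorem \ref{main theorem normalized Laplacian} provides the shift $D_{\mathcal{W}} = \dim C^{n+1}(H,\rb) - \dim H^{n+1}(H,\rb) = \dim C^1(H,\rb) - \dim H^1(H,\rb)$, whereas the corollary is phrased with $\dim C^0(H,\rb) - \dim H^0(H,\rb)$. To reconcile the two I would invoke the Euler--Poincar\'e formula for the two-term cochain complex $0 \to C^0(H,\rb) \to C^1(H,\rb) \to 0$ of the $1$-dimensional complex $H$ (the map being $\delta_0$), namely
\[
\dim C^0(H,\rb) - \dim C^1(H,\rb) = \dim H^0(H,\rb) - \dim H^1(H,\rb),
\]
which rearranges to $\dim C^1(H,\rb) - \dim H^1(H,\rb) = \dim C^0(H,\rb) - \dim H^0(H,\rb)$. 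Hence $D_{\mathcal{W}}$ equals the lower index shift in (\ref{Horak interlacing}), and the boundary convention $0 = \lambda_{1-D_{\mathcal{W}}} = \ldots = \lambda_0$ transfers directly.

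I expect no serious obstacle: the corollary is a translation of the general theorem into the language of graphs. The single point that is easy to miss is the identity between the two forms of the lower index shift, which is guaranteed by the Euler characteristic of the short cochain complex of $H$.
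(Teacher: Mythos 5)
Your proposal is correct and follows essentially the same route as the paper: it specializes Theorem \ref{main theorem normalized Laplacian} to $n=0$, and converts $D_{\mathcal{W}}=\dim C^{1}(H,\rb)-\dim H^{1}(H,\rb)$ into $\dim C^{0}(H,\rb)-\dim H^{0}(H,\rb)$ via the Euler--Poincar\'e identity, which is exactly the Euler-characteristic argument the paper uses. The handling of $D_{H}$, the spectral cap $n+2=2$, and the boundary conventions also matches the paper's proof.
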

\begin{proof}
From the formula for the Euler characteristic in terms of Betti numbers,   $\chi=\sum_{i}(-1)^{i}\dim C^{i}(K,\rb)=\sum_{i}(-1)^{i}\dim H^{i}(K,\rb)$, we get
 $$\dim C^{1}(G,\rb)-\dim H^{1}(G,\rb)=\dim C^{0}(G,\rb)-\dim H^{0}(G,\rb),$$ for every graph $G$. 
 Inserting this in the expression for $D_{\mathcal{W}}$, where  $D_{\mathcal{W}}=\dim C^{1}(H,\rb)-\dim H^{1}(H,\rb)$, we obtain the desired formula.
\end{proof}
\begin{rem}
A similar claim holds for the combinatorial graph Laplacian with a slight modification. Instead of $2=\lambda_{N+1}=\ldots=\lambda_{N+\dim C^{0}(H,\rb)}$, we take $ \dim C^{0}(K,\rb)=\lambda_{N+1}=\ldots=\lambda_{N+\dim C^{0}(H,\rb)}$.
\end{rem}
\begin{rem}
The interlacing inequalities of Theorem \ref{main theorem}  for non-bipartite graphs $H$ are at least as good as the interlacing inequalities in  (\ref{Butler inequality}). Moreover, for a general disconnected graph $H$,  we obtain a finer lower bound than Butler \cite{Butler}.
In addition,  (\ref{Horak interlacing}) includes even the cases when $H$ contains isolated vertices.
When the graph $H$ is bipartite, the upper interlacing inequality in (\ref{Butler inequality}) is better than in Theorem \ref{main theorem}; otherwise, the two inequalities are the same.
However,  the estimate of Butler for bipartite graphs is tightly
connected to the normalization properties of the normalized graph Laplacian, thus it  cannot be generalized to other graph Laplacians.
\end{rem}
\begin{exmp}
Let $(K,w_{K})$ and $(L,w_{L})$ be the simplicial complexes shown in Figure \ref{ig01} and Figure \ref{ig02}, respectively, such that the weight  functions $w_{K}$ and $w_{L}$ take value $1$ on all edges.
The eigenvalues of the normalized  graph Laplacian
$\Delta^{up}_{0}(K)$  are  $0, 0.73,1,1,  1.42,$ and $1.85$, and the eigenvalues of 
 $\Delta^{up}_{0}(L)$ are $0, 0.19, 0.89, 1.5,1.5,$ and $1.92 $.
The lower interlacing inequality from Corollary \ref{Corollary graph interlacing} yields
 $\lambda_{k-3}\leq \theta_{k}$, while the inequality (\ref{Butler inequality}) gives
 $\lambda_{k-5}\leq \theta_{k}$.
\end{exmp}
  \begin{figure}[h!tp]
\label{figure interlacing graph}
  \begin{center}
 \subfigure[raggedright,scriptsize][$K$ ]{\label{ig01}\includegraphics[scale=0.4]{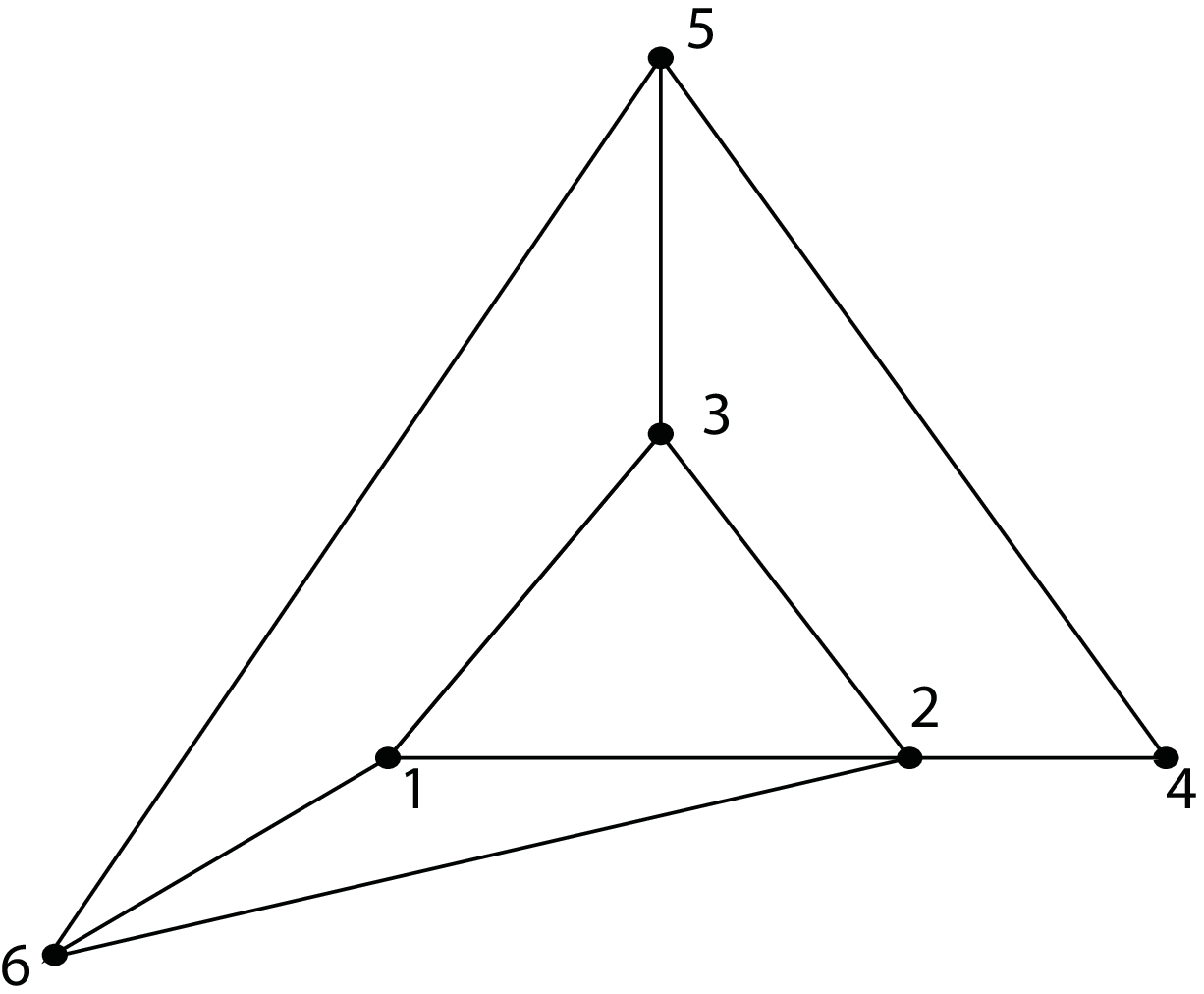}} \qquad\qquad\qquad
    \subfigure[raggedright,scriptsize][ $L$ ]{\label{ig02}\includegraphics[scale=0.4]{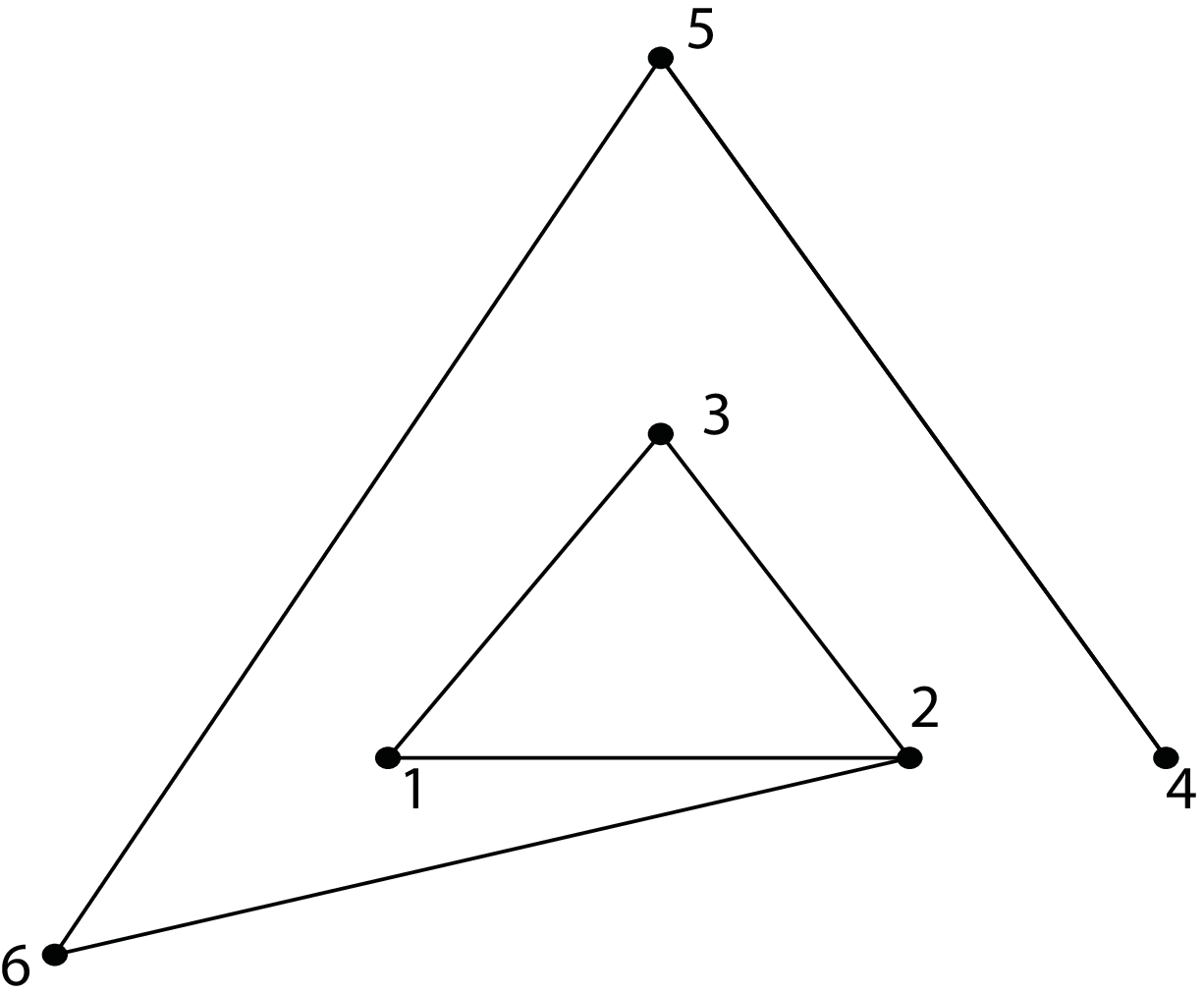}}
  \end{center}
  \caption{The graph $L$ on the right  is obtained as the difference of $K$ and $H$, where $H=\{\{2,4\}, \{3,5\},\{1,6\}\}$ }
\end{figure} 

An interesting consequence of the previous corollary is the relation
between  the smallest non-zero eigenvalues of the graphs $L$ and $K$ for certain types of graphs $H$.
\begin{coll}
Let $(K,w_{K})$ be a weighted graph, $(H,w_{H})$ its subgraph,    and $(L,w_{L})$ their proper difference.
Assume the graph $H=(v_{i},v_{j})$ is an edge with one pending vertex
i.e., the only neighbour of the vertex $v_{j}$ is $v_{i}$, and let 
$w_{H}(v_{i},v_{j})=w_{K}(v_{i},v_{j})$. Then
$\lambda_{k-1}\leq \theta_{k}$ for every $k\leq N$. In particular, 
$\lambda_{2}\leq \theta_{3}$, where $\lambda_{2}$ and $\theta_{3}$ are the smallest non-zero eigenvalues of $\mathcal{L}^{up}_{0}(K)$ and $\mathcal{L}^{up}_{0}(L)$,  respectively.
\end{coll}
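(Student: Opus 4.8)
The plan is to obtain both assertions as direct specializations of Corollary \ref{Corollary graph interlacing}, so that the only genuine work is to evaluate the relevant topological invariants of $H$ and to count the multiplicity of the eigenvalue $0$ for each operator.

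First I would identify $H$ as a simplicial complex. Here $H$ consists of the single edge $\{v_i,v_j\}$ together with its two endpoints, so $\dim C^{0}(H,\rb)=2$ (while $\dim C^{1}(H,\rb)=1$). Since $H$ is connected it has a single connected component, whence $\dim H^{0}(H,\rb)=1$ (equivalently $H$ is contractible, so $H^{0}(H,\rb)\cong\rb$ and $H^{1}(H,\rb)=0$). Substituting these two numbers into the lower estimate of Corollary \ref{Corollary graph interlacing} produces the index shift $\dim C^{0}(H,\rb)-\dim H^{0}(H,\rb)=2-1=1$, and therefore $\lambda_{k-1}\le\theta_{k}$ for every $k=1,\dots,N$, with the convention $\lambda_{0}=0$ inherited from the corollary. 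This is exactly the first claim.

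For the ``in particular'' statement I would pin down which indices label the smallest non-zero eigenvalues. Assuming $K$ is connected, $\mathcal{L}^{up}_{0}(K)$ has a one-dimensional kernel spanned by the constant cochain, so $\lambda_{1}=0<\lambda_{2}$ and $\lambda_{2}$ is the smallest non-zero eigenvalue. Passing to $L$ sets the weight of $\{v_i,v_j\}$ to zero; because $v_j$ is a pending vertex, this removal leaves $v_j$ with no positively weighted incident edge, so the support $L'$ acquires $v_j$ as an additional connected component. By the revised Hodge theorem recorded in the Remark following (\ref{formal adjoint}), the kernel of $\mathcal{L}^{up}_{0}(L)$ is isomorphic to $H^{0}(L',\rb)$ together with the span of the weight-zero vertices; since the vertices retain their positive $K$-weights here, this kernel has dimension $\dim H^{0}(L',\rb)=2$. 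Hence $\theta_{1}=\theta_{2}=0<\theta_{3}$, so $\theta_{3}$ is the smallest non-zero eigenvalue of $\mathcal{L}^{up}_{0}(L)$. Applying $\lambda_{k-1}\le\theta_{k}$ at $k=3$ then yields $\lambda_{2}\le\theta_{3}$.

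The main obstacle is not the algebra but the bookkeeping of the kernels: one must verify that deleting the pending edge raises the multiplicity of the eigenvalue $0$ by exactly one (so that the non-zero part of the spectrum of $L$ begins at index $3$ rather than $2$), and this is precisely where the hypotheses that $v_j$ is a pending vertex and that $w_{H}(v_i,v_j)=w_{K}(v_i,v_j)$ enter. Everything else is a mechanical substitution into the previously established corollary.
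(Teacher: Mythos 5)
Your proof is correct and matches the paper's intended argument: the paper states this corollary without a separate proof, as a direct specialization of Corollary \ref{Corollary graph interlacing} (index shift $\dim C^{0}(H,\rb)-\dim H^{0}(H,\rb)=2-1=1$) combined with exactly the kernel bookkeeping you carry out --- compare the proof of Corollary \ref{Corollary collapses}, where the paper does the same counting of zero eigenvalues for the general-dimensional analogue. The one subtlety is that Corollary \ref{Corollary graph interlacing} is stated for the normalized Laplacian $\Delta^{up}_{0}$ while the present claim concerns the generalized $\mathcal{L}^{up}_{0}$, but your substitution survives this because the lower bound of Theorem \ref{main theorem} holds for arbitrary weights and yields $D_{\mathcal{W}}=\dim C^{1}(H,\rb)-\dim H^{1}(H,\rb)=1-0=1$ for a single edge, the same shift; this conflation is present in the paper itself, so it is not a gap in your argument.
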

The above result implies  that the algebraic connectivity of a graph increases with the removal of pending vertices.
A similar claim holds for the general Laplace operator $\mathcal{L}^{up}_{n}$.
\begin{coll}
\label{Corollary collapses}
Let $(K,w_{K})$ be a weighted simplicial complex, and $(H,w_{H})$ its subcomplex such that   $(L,w_{L})$ is their proper difference.
Assume the graph $H$ is an $(n+1)$-simplex $\bar{F}$ with one pending vertex $v$, i.e., the only $(n+1)$-face containing $v$ is $H$ itself, and 
$w_{H}(\bar{F})=w_{K}(\bar{F})$. Then, 
$\lambda_{k-1}\leq \theta_{k}$ for every $k\leq N$. In particular,
$\lambda_{K-1}\leq \theta_{K}$, where   $\lambda_{K-1}$ and $\theta_{K}$ are the smallest non-zero eigenvalues of $\mathcal{L}^{up}_{n}(K)$ and $\mathcal{L}^{up}_{n}(L)$,  respectively.
\end{coll}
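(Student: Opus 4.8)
The plan is to obtain both assertions from Theorem \ref{main theorem}, once the topological quantity $D_{\mathcal{W}}$ is computed for this particular $H$. Since $H$ consists of the single $(n+1)$-simplex $\bar{F}$ together with all of its faces, it is contractible, so $\dim C^{n+1}(H,\rb)=1$ and $\dim H^{n+1}(H,\rb)=0$; hence
$$D_{\mathcal{W}}=\dim C^{n+1}(H,\rb)-\dim H^{n+1}(H,\rb)=1.$$
Feeding this value into the lower interlacing inequality of Theorem \ref{main theorem} immediately gives $\lambda_{k-1}\le\theta_{k}$ for all $k\le N$, which is the first claim.

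For the statement on the smallest non-zero eigenvalues, I first recall that $\ker\mathcal{L}_{n}^{up}(K)=\ker\delta_{K}$, so the number of vanishing eigenvalues of $\mathcal{L}_{n}^{up}(K)$ equals $\dim C^{n}(K,\rb)-\rank\delta_{K}=N-\rank\delta_{K}$. The plan is to show that passing from $K$ to $L$ raises this kernel dimension by exactly one, i.e. that $\rank\delta_{L}=\rank\delta_{K}-1$. The pending-vertex hypothesis is precisely what makes this work: because $\bar{F}$ is the only $(n+1)$-face of $K$ containing $v$, every $n$-face of $\bar{F}$ that contains $v$ is a face of no other $(n+1)$-simplex. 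Picking one such free $n$-face $G$, the elementary cochain $e_{[G]}$ satisfies $\delta_{K}e_{[G]}=\pm e_{[\bar{F}]}$, so that $e_{[\bar{F}]}\in\im\delta_{K}$.

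Next, since $w_{L}(\bar{F})=w_{K}(\bar{F})-w_{H}(\bar{F})=0$, the (degenerate) inner product on $C^{n+1}(L,\rb)$ ignores the $\bar{F}$-coordinate, and a computation of the Rayleigh quotient exactly as in (\ref{eq1}) shows
$$\ker\mathcal{L}_{n}^{up}(L)=\{g : (\delta_{K}g)([\bar{G}])=0 \text{ for every } \bar{G}\neq\bar{F}\};$$
equivalently, $\im\delta_{L}$ is the image of $\im\delta_{K}$ under the projection deleting the $\bar{F}$-coordinate. Because $e_{[\bar{F}]}\in\im\delta_{K}$, this projection lowers the dimension by exactly one, so $\rank\delta_{L}=\rank\delta_{K}-1$ and therefore $\dim\ker\mathcal{L}_{n}^{up}(L)=\dim\ker\mathcal{L}_{n}^{up}(K)+1$. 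Writing $z:=\dim\ker\mathcal{L}_{n}^{up}(K)$, the smallest non-zero eigenvalues sit at index $z+1$ for $K$ and index $z+2$ for $L$; applying $\lambda_{k-1}\le\theta_{k}$ with $k=z+2$ yields $\lambda_{z+1}\le\theta_{z+2}$, which is the asserted inequality between the smallest non-zero eigenvalues.

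I expect the main obstacle to lie in the second step: deriving $e_{[\bar{F}]}\in\im\delta_{K}$ cleanly from the pending-vertex condition, and then arguing carefully that deleting the zero-weight $(n+1)$-coordinate lowers the coboundary rank by \emph{exactly} one rather than possibly leaving it unchanged. The remaining parts are a direct specialization of Theorem \ref{main theorem}, in complete analogy with the preceding graph corollary on pending vertices.
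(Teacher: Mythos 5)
Your proof is correct, and its first half (computing $D_{\mathcal{W}}=\dim C^{n+1}(H,\rb)-\dim H^{n+1}(H,\rb)=1$ from contractibility of the simplex and invoking Theorem \ref{main theorem}) is exactly what the paper does, though the paper leaves it implicit. For the second half --- showing that passing from $K$ to $L$ creates exactly one additional zero eigenvalue --- you take a genuinely different route. The paper counts zeros homologically: it cites Theorem 3.1 of \cite{HorakNCL} to express the number of zeros of $\mathcal{L}^{up}_{n}(K)$ as $\dim C^{n}(K,\rb)+\dim H^{n+1}(K,\rb)-\dim C^{n+1}(K,\rb)$, applies the same formula to the non-degenerate complex $L'$ obtained by deleting the zero-weight simplices (using silently that removing $\bar{F}$ together with its free facets leaves $H^{n+1}$ unchanged), and then adds back $n+1$ zeros for the zero-weight $n$-faces via the degenerate Hodge remark. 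You instead argue by elementary linear algebra: the pending vertex yields a free facet $G$ with $\delta_{K}e_{[G]}=\pm e_{[\bar{F}]}$, hence $e_{[\bar{F}]}\in\im\delta_{K}$, so deleting the $\bar{F}$-coordinate lowers the rank of the effective coboundary by exactly one. Your argument is self-contained (no appeal to the discrete Hodge theorem or to invariance of $H^{n+1}$ under collapse) and is in fact more robust: it does not require knowing that exactly $n+1$ of the $n$-faces of $L$ carry weight zero, a fact the paper's count depends on even though it is not explicit in the hypotheses of the corollary. One small point you gloss over: with the paper's convention that $\mathcal{R}_{A}(g)=0$ whenever $(g,g)=0$, the zero eigenvalues of the degenerate Laplacian also come from cochains supported on the zero-weight $n$-faces; but any such $n$-face must be free (otherwise $L'$ would not be a complex), so such a cochain has coboundary supported on $e_{[\bar{F}]}$ and already lies in the space $\{g:(\delta_{K}g)([\bar{G}])=0 \textrm{ for all } \bar{G}\neq\bar{F}\}$ that you identify; your count is therefore unaffected.
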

\begin{proof}
From \cite{HorakNCL} (Theorem 3.1.), it follows that the number of zero eigenvalues in the spectrum of $\mathcal{L}_{n}^{up}(K)$
 is $\dim C^{n}(K,\rb)+\dim H^{n+1}(K,\rb)-\dim C^{n+1}(K,\rb)$.
The number of zeros in the spectrum of the generalized upper Laplacian
of the simplicial complex $L'$, which is obtained from the simplicial complex $L$ by deletion of simplices of weight $0$ is
$$\dim C^{n}(K,\rb)-(n+1)+\dim H^{n+1}(K,\rb)-\dim C^{n+1}(K,\rb)+1=\dim C^{n}(K,\rb)+\dim H^{n+1}(K,\rb)-\dim C^{n+1}(K,\rb)-n.$$
Hence, the number of zeros in $\mathcal{L}^{up}_{n}(L)$ is $$\dim C^{n}(K,\rb)+\dim H^{n+1}(K,\rb)-\dim C^{n+1}(K,\rb)-n+(n+1),$$ since 
$L$ contains exactly $n+1$ $n$-faces whose weight is $0$.
Therefore, there is an additional  zero in the spectrum of $\mathcal{L}^{up}_{n}(L)$ compared to  $\mathcal{L}^{up}_{n}(K)$. Together with  Theorem \ref{main theorem} this proves the claim.
\end{proof}

In the sequel, we  shall further exploit the properties of $\mathcal{R}_{\pi^{*}\pi}$ and $\mathcal{R}_{i^{*}i}$, and  obtain the following  inequalities.
\begin{thm}
\label{Proposition interlacing}
\begin{equation}
\label{left-right inequality interlacing}
\min_{\substack{\bar{F}\in S_{n+1}(L):\\ w_{L}(\bar{F})\neq 0}}\frac{w_{L}(\bar{F})}{w_{K}(\bar{F})} \lambda_{k-D_{\mathcal{Z}}}\leq \theta_{k}\leq \lambda_{k} \max_{\substack{F\in S_{n}(L):\\ w_{L}(F)\neq 0}}\frac{w_{K}(F)}{w_{L}(F)},
\end{equation}
 where   $D_{\mathcal{Z}}=\dim C^{n}(L,\rb)-\dim C^{n+1}(L',\rb)+\dim H^{n+1}(L',\rb)$, and 
 $L'$ is a maximal subcomplex of $L$ whose simplices are of non-zero weight, and 
 $\lambda_{0}=\ldots=\lambda_{-D_{\mathcal{Z}}+1}=0$.
\end{thm}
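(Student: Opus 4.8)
The plan is to run the min--max machinery of Theorem \ref{main theorem} again, but to replace the sharp dichotomy ``$\mathcal{R}_{\pi^*\pi}=1$ / $\mathcal{R}_{i^*i}=1$'' of Lemmas \ref{lemma projection} and \ref{lemma inclusion supprot} by the \emph{quantitative} two-sided bounds on these two Rayleigh quotients. Throughout I would work from the factorisation (\ref{decomposition}), namely $\mathcal{R}_{\mathcal{L}_n^{up}(L)}(g)=\mathcal{R}_{\pi^*\pi}(\delta_K i g)\,\mathcal{R}_{\mathcal{L}_n^{up}(K)}(ig)\,\mathcal{R}_{i^*i}(g)$, together with the observation that $i$ is the identity on the common underlying space $C^n(L)=C^n(K)$, so that minimising $\mathcal{R}_{\mathcal{L}_n^{up}(K)}(ig)$ over $k$-dimensional subspaces reproduces $\lambda_k$. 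Degenerate directions with $(g,g)=0$ are absorbed by the convention of Remark \ref{remark convention}, under which both sides of each inequality vanish.

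For the upper bound I would begin from $\theta_k=\min_{\mathcal{V}_k}\max_{g\in\mathcal{V}_k}\mathcal{R}_{\mathcal{L}_n^{up}(L)}(g)$ and insert the two elementary estimates $\mathcal{R}_{\pi^*\pi}(\delta_K ig)\le 1$ (Lemma \ref{lemma projection inequality}) and $\mathcal{R}_{i^*i}(g)\le \max_{F\in S_n(L),\,w_L(F)\ne 0} w_K(F)/w_L(F)$, the latter being immediate from the explicit Rayleigh quotient (\ref{Rayleigh inclusion}) since the eigenvalues of $i^*i$ are precisely the ratios $w_K(F)/w_L(F)$. The factorisation then gives, pointwise, $\mathcal{R}_{\mathcal{L}_n^{up}(L)}(g)\le \big(\max_F w_K(F)/w_L(F)\big)\,\mathcal{R}_{\mathcal{L}_n^{up}(K)}(ig)$, and pulling the constant out of the min--max yields $\theta_k\le \lambda_k\,\max_F w_K(F)/w_L(F)$. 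This half is routine.

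The lower bound is the substantive part. Here I would use $\mathcal{R}_{i^*i}(g)\ge 1$ whenever $(g,g)>0$ (Lemma \ref{lemma inclusion}(i)) and the estimate $\mathcal{R}_{\pi^*\pi}(\delta_K ig)\ge m:=\min_{\bar F,\,w_L(\bar F)\ne 0} w_L(\bar F)/w_K(\bar F)$. Reading the explicit quotient (\ref{eq1}), this last bound holds exactly on the subspace of those $g$ for which $\delta_K ig$ vanishes on every $(n+1)$-simplex of weight zero, i.e.\ is supported on the positively weighted subcomplex $L'$. On that subspace the factorisation gives $\mathcal{R}_{\mathcal{L}_n^{up}(L)}(g)\ge m\,\mathcal{R}_{\mathcal{L}_n^{up}(K)}(ig)$. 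Restricting the inner maximisation to $\mathcal{V}_k$ intersected with this subspace, then widening the minimisation exactly as in the passage from (\ref{ineq02}) to (\ref{ineq03}) and applying Lemma \ref{lemma minmax}, produces $\theta_k\ge m\,\lambda_{k-c}$, where $c$ is the codimension of the admissible subspace.

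The main obstacle is the dimension bookkeeping that identifies $c$ with $D_{\mathcal{Z}}$, and this is exactly where $L'$ and the degenerate inner product demand care. The constraint ``$\delta_K ig$ supported on $L'$'' is controlled by the coboundary $\delta_{L'}\colon C^n(L')\to C^{n+1}(L')$, and I would extract the relevant dimension from the splitting short exact sequences already used in Lemma \ref{lemma projection}, which give $\dim C^{n+1}(L')-\dim H^{n+1}(L')=\rank\delta_{L'}$ because $L'$ is $(n+1)$-dimensional. One must then also excise the genuinely degenerate directions --- cochains supported on the weight-zero $n$-simplices of $L$, where $(g,g)=0$ and Lemma \ref{lemma inclusion}(i) is not available --- which is what the term $\dim C^n(L)$ contributes to $D_{\mathcal{Z}}=\dim C^n(L)-\dim C^{n+1}(L')+\dim H^{n+1}(L')$. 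Assembling these two contributions into a single orthogonal subspace $\mathcal{Z}$ of dimension exactly $D_{\mathcal{Z}}$, against which the admissible $g$ must be tested, is the delicate step; once $\dim\mathcal{Z}=D_{\mathcal{Z}}$ is pinned down, the index shift $k\mapsto k-D_{\mathcal{Z}}$ and the padding convention $\lambda_0=\dots=\lambda_{-D_{\mathcal{Z}}+1}=0$ follow at once from Lemma \ref{lemma minmax}.
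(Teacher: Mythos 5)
Your upper bound is fine and is exactly the paper's argument (insert $\mathcal{R}_{\pi^{*}\pi}\le 1$ from Lemma \ref{lemma projection inequality} and $\mathcal{R}_{i^{*}i}\le\max w_{K}/w_{L}$ into (\ref{decomposition}) and pull the constant out of the min--max). The gap is in the lower bound, and it sits precisely in the step you postpone as ``delicate'': identifying the codimension $c$ of your admissible subspace with $D_{\mathcal{Z}}$. Your subspace is $S=\{g\in C^{n}(L,\rb)\mid (\delta_{K}ig)([\bar{F}])=0 \textrm{ whenever } w_{L}(\bar{F})=0\}$, and on $S$ the pointwise bound $\mathcal{R}_{\pi^{*}\pi}(\delta_{K}ig)\ge m$ is indeed correct, so the min--max moves yield $\theta_{k}\ge m\,\lambda_{k-c}$ with $c=\mathrm{codim}\,S$. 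But $c$ is the rank of the map $g\mapsto (\delta_{K}g)\lvert_{\{w_{L}=0\}}$, i.e.\ it is governed by the coboundary read on the \emph{weight-zero} $(n+1)$-simplices, whereas $D_{\mathcal{Z}}=\dim C^{n}(L,\rb)-\rank\delta_{\bar{L}}$ is the nullity of the coboundary read on $L'$. These are unrelated quantities, and your bookkeeping sketch (via $\rank\delta_{L'}=\dim C^{n+1}(L',\rb)-\dim H^{n+1}(L',\rb)$, plus ``excising'' the degenerate directions to explain the term $\dim C^{n}(L,\rb)$) conflates the two. Concretely, let $K=K_{5}$ with all weights $1$ and let $H$ be the complete graph on $\{2,3,4,5\}$ with weight $1$ on its six edges (and vanishing weight on its vertices), so that $L'$ is the spanning star centered at vertex $1$. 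Then $D_{\mathcal{Z}}=5-4+0=1$, but $S=\{g\mid g(2)=g(3)=g(4)=g(5)\}$ has codimension $3$: your argument proves $\theta_{k}\ge\lambda_{k-3}$, not the claimed $\theta_{k}\ge\lambda_{k-1}$.

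You should also know that this is not a defect you can repair, because the inequality as printed fails in that very example: $\s(\mathcal{L}^{up}_{0}(K_{5}))=\{0,5,5,5,5\}$ while $\s(\mathcal{L}^{up}_{0}(L))=\{0,1,1,1,5\}$ (the star), so $\theta_{3}=1<5=\lambda_{3-D_{\mathcal{Z}}}$. The paper's own proof diverges from yours at exactly this point: it restricts the inner maximization to $g\perp\mathcal{Z}$ with $\mathcal{Z}=\ker\delta_{\bar{L}}$ (coboundary vanishing \emph{on} $L'$, not off it) in (\ref{ineq smaller set}), and then pulls out the constant $m$, which implicitly asserts $\mathcal{R}_{\pi^{*}\pi}(\delta_{K}ig)\ge m$ for all $g\perp\mathcal{Z}$. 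That assertion is false: orthogonality to $\mathcal{Z}$ does not prevent $\delta_{K}ig$ from charging weight-zero simplices, which inflate the denominator of (\ref{eq1}); e.g.\ $g=(0,1,-1,0,0)\perp\mathcal{Z}$ in the example above has $\mathcal{R}_{\pi^{*}\pi}(\delta_{K}ig)=1/5<1=m$. So your instinct to work on $S$ --- the subspace where the Rayleigh bound genuinely holds --- is the sound one, and the honest conclusion of your method is the corrected statement $\theta_{k}\ge m\,\lambda_{k-c}$ with $c=\rank\bigl(g\mapsto(\delta_{K}g)\lvert_{\{w_{L}=0\}}\bigr)$ in place of $D_{\mathcal{Z}}$; that shift cannot in general be improved to $D_{\mathcal{Z}}$.
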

\begin{proof} 
From (\ref{eq1}), it follows that $\mathcal{R}_{\pi^{*}\pi}(\delta_{K}ig)=0$ iff $\delta_{K}ig(\bar{F})=0$ for every $\bar{F}\in S_{n+1}( L')$, i.e.,
the restriction of $\delta_{K}ig(\bar{F})$ on $C^{n+1}(L',\rb)$ must be zero.
Let $\bar{L}$ be a subcomplex of $K$, such that $S_{n}(K)=S_{n}(\bar{L})$ and $S_{n+1}(L')=S_{n+1}(\bar{L})$, and let $\mathcal{Z}=\ker \delta_{\bar{L}}$, where $\delta_{\bar{L}}$ is the $n$-th coboundary map of the simplicial complex $\bar{L}$.
Then, 
\begin{equation}
\label{bar L implictaion}
\textrm{if }  g \in \ker \delta_{\bar{L}}, \textrm{then }\mathcal{R}_{\pi^{*}\pi}(\delta_{K}ig)=0.
\end{equation}
The dimension of $\mathcal{Z}$ is $\dim C^{n}(L,\rb)-\dim C^{n+1}(L',\rb)+\dim H^{n+1}(L',\rb)$.
From   (\ref{ineq01}) and (\ref{bar L implictaion}), we obtain
\begin{align}
\theta_{k}&\geq \min_{\mathcal{V}_{k}}\max_{g\in \mathcal{V}_{k}}  \mathcal{R}_{\pi^{*}\pi}(\delta_{K}ig)   \mathcal{R}_{\mathcal{L}^{up}_{n}(K)}(ig)\notag\\
&\geq \min_{\mathcal{V}_{k}}\max_{g\in \mathcal{V}_{k}, g\perp \mathcal{Z}}  \mathcal{R}_{\pi^{*}\pi}(\delta_{K}ig)   \mathcal{R}_{\mathcal{L}^{up}_{n}(K)}(ig)\label{ineq smaller set}\\
&\geq \min_{\substack{\bar{F}\in S_{n+1}(L):\\ w_{L}(\bar{F})\neq 0}}\frac{w_{L}(\bar{F})}{w_{K}(\bar{F})} \min_{\mathcal{V}_{k}}\max_{g\in \mathcal{V}_{k}, g\perp \mathcal{Z}}     \mathcal{R}_{\mathcal{L}^{up}_{n}(K)}(ig)\label{ineq min }\\
&\geq \min_{\substack{\bar{F}\in S_{n+1}(L):\\ w_{L}(\bar{F})\neq 0}}\frac{w_{L}(\bar{F})}{w_{K}(\bar{F})} \min_{\mathcal{V}_{k}}\max_{g\in \mathcal{V}_{k-D_{\mathcal{Z}}}}     \mathcal{R}_{\mathcal{L}^{up}_{n}(K)}(ig)\label{ineq final }\\
&=\min_{\substack{\bar{F}\in S_{n+1}(L):\\ w_{L}(\bar{F})\neq 0}}\frac{w_{L}(\bar{F})}{w_{K}(\bar{F})}\lambda_{k-D_{\mathcal{Z}}}\notag .
\end{align}
The derivation of the inequalities above  follows  as in Theorem \ref{main theorem}.\\
 The right inequality in (\ref{left-right inequality interlacing}) is due to  (\ref{ineqZX}) and the fact that $\mathcal{R}_{i^{*}i}(f)\leq \max_{F\in S_{n}(L)}\frac{w_{K}(F)}{w_{L}(F)}$.
\end{proof}
A special case of the previous theorem  is  the Courant-Weyl inequality for the combinatorial Laplacian.
\begin{coll}
\label{Courant-Weyl}
Let $(K,w_{K})$ be a weighted simplicial complex and $(L,w_{L})$ its subcomplex, such that 
$C^{n}(K,\rb)=C^{n}(L,\rb)$, $w_{K}=1$, and $w_{L}(F)\in\{0,1\}$. Then 
$\theta_{k}\leq\lambda_{k}$, where 
$\theta_{i}$ and $\lambda_{i}$ are the eigenvalues of $L^{up}_{n}(L)$ and $L^{up}_{n}(K)$, respectively, ordered increasingly.
\end{coll}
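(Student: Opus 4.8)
The plan is to obtain this statement as an immediate specialization of the upper bound in Theorem \ref{Proposition interlacing}. First I would check that the hypotheses of the corollary place us in exactly the setting of that theorem: the degenerate weighted complex $(L,w_L)$ with $L\equiv K$ arises as the proper difference of $(K,w_K)$ and the subcomplex $(H,w_H)$ determined by $w_H:=w_K-w_L=1-w_L$, and the assumption $C^{n}(K,\rb)=C^{n}(L,\rb)$ guarantees $S_{n}(K)=S_{n}(L)$. Hence both up-Laplacians $L^{up}_{n}(K)$ and $L^{up}_{n}(L)$ act on a common $n$-cochain space of dimension $N$, and their eigenvalues $\lambda_{1}\le\dots\le\lambda_{N}$ and $\theta_{1}\le\dots\le\theta_{N}$ are indexed over the same range, so the claimed index-wise comparison $\theta_k\le\lambda_k$ is meaningful.

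The core of the argument is then to evaluate the multiplicative constant appearing on the right-hand side of (\ref{left-right inequality interlacing}). Since $w_{K}\equiv 1$ and $w_{L}(F)\in\{0,1\}$, every $n$-face $F$ with $w_{L}(F)\neq 0$ must satisfy $w_{L}(F)=1$, whence $\frac{w_{K}(F)}{w_{L}(F)}=1$. Consequently $\max_{F\in S_{n}(L),\,w_{L}(F)\neq 0}\frac{w_{K}(F)}{w_{L}(F)}=1$, and the right inequality of Theorem \ref{Proposition interlacing} collapses to $\theta_{k}\le\lambda_{k}$, which is precisely the assertion. Note also that $w_{K}\equiv 1$ is exactly the condition under which $\mathcal{L}^{up}_{n}$ is the combinatorial up-Laplacian $L^{up}_{n}$, so the eigenvalues being compared are indeed those named in the statement.

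I do not expect any genuine obstacle; the only point requiring care is the bookkeeping confirming that the two operators share the same domain and that the constant factor is exactly $1$ rather than merely bounded by it. It is worth remarking that this recovers the classical Courant--Weyl inequality in operator form: passing from $K$ to $L$ suppresses a collection of $(n+1)$-faces (those of weight $0$), and the removed contribution $L^{up}_{n}(K)-L^{up}_{n}(L)$ is positive semidefinite, so each eigenvalue can only decrease index by index --- which is exactly the content of $\theta_{k}\le\lambda_{k}$.
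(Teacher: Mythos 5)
Your proposal is correct and takes essentially the same route as the paper: the corollary is stated there precisely as a specialization of the right-hand inequality of Theorem \ref{Proposition interlacing}, where $w_{K}\equiv 1$ and $w_{L}(F)\in\{0,1\}$ force the factor $\max_{F\in S_{n}(L),\,w_{L}(F)\neq 0}\frac{w_{K}(F)}{w_{L}(F)}$ to equal $1$, yielding $\theta_{k}\leq\lambda_{k}$. The paper gives no further detail, so your bookkeeping (common cochain space via $C^{n}(K,\rb)=C^{n}(L,\rb)$, constant exactly $1$) supplies exactly the intended argument.
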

Therefore, inequality (\ref{Godsil interlacing}) is a direct consequence of Corollary \ref{Courant-Weyl} and Theorem \ref{main theorem}.
In the sequel, we shall derive an upper bound for the maximal eigenvalue of the generalized Laplacian, using the above  interlacing theorems.
\begin{coll}
Let $(L,w_{L})$ be a weighted simplicial complex on $N$ vertices, and let $\theta_{N_{L}}$ be the maximum eigenvalue of $\mathcal{L}^{up}_{n}(L)$. 
Then 
\begin{equation}
\label{generalized inequality}
\theta_{N_{L}}\leq \frac{N}{\min_{F\in S_{n}(L)} w_{L} (F)}.
\end{equation}
In particular, if $w_{L}\equiv 1$ (the  combinatorial  Laplacian), then 
\begin{equation}
\label{Duval inequality}
\theta_{N_{L}}\leq N.
\end{equation}
\end{coll}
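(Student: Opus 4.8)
The plan is to read (\ref{generalized inequality}) off the \emph{upper} interlacing inequality of Theorem \ref{Proposition interlacing} by comparing $(L,w_L)$ against the combinatorial complex built on the same underlying structure. Concretely, I would take $(K,w_K)$ to be the simplicial complex with $K\equiv L$ and constant weight $w_K\equiv 1$, arranged so that $(L,w_L)$ is a weighted subcomplex of $(K,w_K)$ and their proper difference exists; this is precisely the hypothesis under which Theorem \ref{Proposition interlacing} was proved. The right-hand inequality there reads $\theta_k\leq \lambda_k\,\max_{F\in S_n(L):\,w_L(F)\neq 0}\frac{w_K(F)}{w_L(F)}$, where the $\lambda_k$ are now the eigenvalues of the combinatorial operator $L^{up}_n(K)$. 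Since $w_K\equiv 1$, the maximizing factor collapses to $\max_F \frac{1}{w_L(F)}=\frac{1}{\min_{F\in S_n(L)} w_L(F)}$, which already produces the correct denominator in (\ref{generalized inequality}).

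The remaining step is to control the numerator by evaluating at the top index. Because $K\equiv L$, both operators act on $C^n$ of the same dimension, so the largest eigenvalue $\theta_{N_L}$ is interlaced by $\lambda_{N_L}$, the largest eigenvalue of the combinatorial up-Laplacian $L^{up}_n(K)$. I would then invoke the bound that the spectrum of $L^{up}_n(K)$ is dominated by the number of vertices of $K$ (Proposition~4.2 of \cite{DuvalShifted}), which is $N$; hence $\lambda_{N_L}\leq N$. Combining the two gives $\theta_{N_L}\leq \frac{\lambda_{N_L}}{\min_F w_L(F)}\leq \frac{N}{\min_F w_L(F)}$. The special case $w_L\equiv 1$ is then immediate, since the denominator equals $1$ and the comparison complex $K$ coincides with $(L,w_L)$ itself, yielding (\ref{Duval inequality}).

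The step I expect to require the most care is confirming that the comparison complex $(K,w_K\equiv 1)$ is an \emph{admissible} one, i.e.\ that $(L,w_L)$ genuinely sits inside it and that their proper difference $(H,w_H)$ has non-negative weights, as demanded by Theorem \ref{Proposition interlacing}. This amounts to the normalization $w_L(F)\leq 1$ on every $n$-face, which is what makes $\max_F \frac{1}{w_L(F)}$ the relevant quantity and what lets the combinatorial vertex bound enter; without it one would only obtain the scale-invariant form $\theta_{N_L}\leq N\,\frac{\max_F w_L(F)}{\min_F w_L(F)}$, since the up-Laplacian depends only on the ratios $w(\bar F)/w(F)$. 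Verifying this admissibility, together with the matching of the top indices $N_L=\dim C^n(L)=\dim C^n(K)$, is the only non-formal ingredient; everything else is a direct specialization of the already-established interlacing theorem.
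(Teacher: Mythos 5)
Your proposal is correct in substance but takes a genuinely different route from the paper's proof. The paper does not keep the comparison complex equal to $L$: it first normalizes $w_L\le 1$, then embeds $(L,w_L)$ (extended by zero weights) into the full $(N-1)$-simplex $K_N$ on $N$ vertices with $w_{K}\equiv 1$, and applies Theorem \ref{Proposition interlacing} to that pair. The payoff of this choice is that the top eigenvalue of $\mathcal{L}^{up}_{n}(K_N)$ is known \emph{exactly} --- it equals $N$ --- so both (\ref{generalized inequality}) and (\ref{Duval inequality}) fall out of the interlacing inequality with no external input; in particular the paper thereby obtains an independent, interlacing-based derivation of the Duval--Reiner bound, which is precisely the point of the remark following the corollary. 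You instead take $K\equiv L$ with unit weights and then import the bound $\lambda_{N_L}\le N$ from Proposition 4.2 of \cite{DuvalShifted}. For the general weighted inequality this is logically valid, and your bookkeeping is indeed lighter (no degenerate extension to the full simplex, and the cochain dimensions of $K$ and $L$ agree for free). But it carries one real cost: when $w_L\equiv 1$ your comparison is the identity and the argument reduces to quoting the very inequality $\lambda_{N_L}\le N$ being proved, so as a proof of (\ref{Duval inequality}) your route is circular, whereas the paper's is not --- it \emph{reproves} that bound rather than assuming it. Finally, your remark on the normalization $w_L\le 1$ is apt and, if anything, more careful than the paper's: the paper labels this reduction ``without loss of generality,'' but since $\mathcal{L}^{up}_{n}$ is invariant under rescaling all weights while the right-hand side of (\ref{generalized inequality}) is not, it is really a standing hypothesis that both your proof and the paper's require, exactly as you flag.
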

\begin{proof}
Without  loss of generality assume that the weight function $w_{L}$ on
every simplex has value $\le 1$. Let $K_{N}$ be an $(N-1)$-simplex on 
$N$ vertices with the weight function $w_{K}\equiv 1$. Then  it is
possible to obtain any $(L,w_{L})$ as a difference
$(K_{N},w_{K})-(H,w_{H})$ for some subcomplex $(H,w_{H})$ of $(K_{N},w_{K})$.
The maximum eigenvalue of $\mathcal{L}^{up}_{n}(K_{N})$ is $\lambda_{N}=N$. By combining this result with (\ref{left-right inequality interlacing}),  we obtain the  inequalities  (\ref{generalized inequality}) and (\ref{Duval inequality}).
\end{proof}
\begin{rem}
In \cite{DuvalShifted} Duval and Reiner obtained (\ref{Duval
  inequality}) by using a different method. For the
normalized Laplacian, the operator estimate   (\ref{generalized inequality}) is better than $\theta_{N_{L}}\leq (n+2)$   if $\frac{N}{(n+2)}\leq \min_{F\in S_{n}(L)} \deg (F)$.
\end{rem}

\section{Interlacing effects of covering and simplicial maps}
\label{section 3}
In this section we analyze the effect of covering maps and more general simplicial maps on the spectrum of the Laplace operator.
Furthermore, we propose a  definition for discrete coverings, different from that in \cite{Rotman}, \cite{Gustavson}, which agrees with the continuous counterpart of covering maps. In particular, we give counterexamples to the  Universal Lifting Theorem for discrete covering maps from \cite{Rotman} (Theorem 2.1.), and Theorem 4.4.
from \cite{Gustavson}, and propose a modified definition of coverings, called strong covering, which fixes the problem.

\begin{defn}
Let $K$ and $L$ be simplicial complexes.
A \emph{simplicial map} $\varphi: K\rightarrow L$  is a function  from the vertices of $K$ to the vertices  of $L$, such that $\varphi(v_{0}),\varphi(v_{1}),\ldots, \varphi(v_{n})$ span a simplex in $L$ whenever $v_{0},v_{1},\ldots,v_{n}$ span a simplex in $K$.
\end{defn}
The  homomorphism of chain groups  $\varphi_{n}: C_{n}( K,\rb)\rightarrow C_{n}(L,\rb)$,  induced by  a simplicial map  $\varphi: K\rightarrow L$, is defined $\rb$-linearly by extending the following map on basis elements 
  $\varphi_{n}[v_{0},v_{1},\ldots,v_{n}]=[\varphi(v_{0}),\varphi(v_{1}),\ldots, \varphi(v_{n})]$.
If $\varphi(v_{i})=\varphi(v_{j})$, for some $i\neq j$, then  $\varphi_{n}[v_{0},v_{1},\ldots,v_{n}]=0$.
By the duality principle the induced homomorphism  $\varphi^{n}: C^{n}( L,\rb)\rightarrow C^{n}(K,\rb)$  of the cochain groups is
 $\varphi g
 ([v_{0},v_{1},\ldots,v_{n}])=g([\varphi(v_{0}),\varphi(v_{1}),\ldots,
 \varphi(v_{n})])$. For brevity, in what follows we shall omit  the index $n$ in 
 $\varphi_{n}$ and $\varphi^{n}$, and assume that all faces, which are not written as a set of their vertices, are oriented positively, unless stated otherwise. 

Let $[G]$ and   $[F]$ be  positively oriented  $n$-simplices
on the vertex sets  $v_{0},v_{1},\ldots,v_{n}$ and $\varphi(v_{0}),\varphi(v_{1}),\ldots, \varphi(v_{n})$, respectively.
We can also represent the induced chain map  $\varphi$ as $\varphi [F]=\sgn( [F],[G]) [G]$, where 
$$
\sgn( [F],[G])=\left\{\begin{array}{rl}
1 & \textrm{ if } [\varphi(v_{0}),\varphi(v_{1}),\ldots,\varphi(v_{n})] \textrm{  is a positively oriented simplex},\\
-1 & \textrm{ otherwise.}
\end{array}
\right.
$$
Henceforth,   $\varphi e_{[G]}$ can be written as 
 $\varphi e_{[G]}=\sum_{\substack{F\in S_{n}(K):\\ \varphi(F)=G}}\sgn( [F],[G]) e_{[F]}$.
 Assume $w_{K}$ and $w_{L}$ are weight functions assigned  to $K$ and $L$. Then, the adjoint 
$\varphi^{*}$ of a simplicial map $\varphi$ is given by  $\varphi^{*}e_{[F]}=\sgn( [F],[G]) \frac{w_{K}(F)}{w_{L}(G)}e_{[G]}$.
However, we will not be concerned with the Laplace operators $\mathcal{L}^{up}_{n}(L,w_{L})$ for an arbitrary weight function $w_{L}$, 
instead we will consider the  weight function $w_{L}$, induced by the
simplicial map $\varphi$ and the weight function $w_{K}$.
There are two reasonable  ways to define  $w_{L}$, given  $w_{K}$ and
the simplicial map $\varphi$. Namely, 
\begin{equation}
\label{rule (ii)}
w_{L}(G)=\sum_{\substack{F\in S_{n}(K):\\ \varphi(F)=G}} w_{K}(F) \textrm{, for every } G\in S_{n}(L),
\end{equation}
and\footnote{Note that it  suffices to consider the weights of the $n$
  and $n+1$-faces of the simplicial complexes $K,L$ in order to completely determine the eigenvalues of $\mathcal{L}^{up}_{n}$.},
 \begin{equation}
\label{rule (i)}
 w_{L}(G)=\sum_{\substack{F\in S_{n}(K):\\ \varphi(F)=G}}w_{K}(F)-\sum_{\substack{\bar{F}\in S_{n+1}(K):\varphi(\bar{F})=G}}w_{K}(\bar{F}),
 \textrm{ for } G\in S_{n}(L).
\end{equation}
In the remainder we  prove that  the following diagram
\begin{equation}
\label{diagram simplicial map}
\begin{CD}
C^{n}(K,\rb)  @<\mathcal{L}^{up}(K)<<   C^{n}(K,\rb) \\
 @VV\varphi^{*} V    @AA\varphi A \\
C^{n}(L,\rb)  @<\mathcal{L}^{up}(L)<<   C^{n}(L,\rb) \\
\end{CD}
\end{equation}
 is  commutative for a  simplicial map $\varphi$, where $w_{L}$ is determined in accordance with the  rule (\ref{rule (ii)}).
In particular,  we will prove that 
\small  
\begin{equation}
\label{varphi equation}
\varphi^{*} \mathcal{L}^{up}(K)\varphi e_{[G]}=\sum_{\substack{F\in S_{n}(K):\\ \varphi(F)=G}}\; \sum_{\substack{F'\in S_{n}(K):\\ F,F'\in  \partial \bar{F}}} \sgn( [F],[G])\sgn([F],\partial [\bar{F}])\sgn( [F'],[G'])\sgn([F]',\partial [\bar{F}])\frac{w_{K}(\bar{F})}{w_{L}(G')}e_{[G']},
\end{equation}
\normalsize
where $G'$ denotes $\varphi(F')$, and is equal to $\mathcal{L}^{up}_{n}(L)$.
For simplicity, we first  look at the case when $\varphi$ is a covering map.
According to Rotman (see \cite{Rotman}), a covering complex is defined as follows.
\begin{defn}
\label{definition covering}
Let $K$ be a simplicial complex. A pair$(K,\varphi)$ is a covering complex of $L$  if:
\begin{itemize}
\item[(i)] $K$ is a connected simplicial complex, 
\item[(ii)] $\varphi: K\rightarrow L$ is a simplicial map, and
\item[(iii)] for every simplex $G\in L$, $\varphi^{-1}(G)$ is a union of pairwise disjoint simplices $\varphi^{-1}(G)=\bigcup_{i} F_{i}$, with 
a bijection $\varphi\lvert_{F_{i}}:F_{i}\rightarrow G$  for each $i$.
A simplicial map $\varphi$ is called a \emph{covering map}.
\end{itemize}
\end{defn}
\label{definition topological covering }
The covering complexes are meant to discretize the notion of covering topological spaces.
\begin{defn}
Let $Y$ be a topological space. A covering space of $Y$ is a topological space $X$ together with a continuous surjective map
$p:X\rightarrow Y$, 
such that 
\begin{itemize}
\item[(i)] for every $y\in Y$, there exists an open neighbourhood $U$, such that preimage  $p^{-1}(U)$ of $U$, is a  union of pairwise disjoint open sets $p^{-1}(U)=\bigcup_{i} V_{i}$, such that $p\lvert_{V_{i}}:V_{i}\rightarrow U$ is a homeomorphism for every $i$. 
\end{itemize}
\end{defn}

\begin{exmp}
\label{example coverings}
All horizontal  pairs in  Figure \ref{coverings}  represent coverings. In particular, the union of the  simplicial complexes 
$\tilde{K}=\{\{1,6\}, \{2,6\}, \{3,5\}, \{3,4\}, \{1\}, \{2\},\{3\},\{4\},\{5\},\{6\} \}$ in Figure \ref{cov01}
is a covering space of their join  $K=\{\{1,6\}, \{2,6\}, \{3,5\}, \{3,4\}, \{1\}, \{2\},\{3\},\{4\},\{5\},\{6\} \}$
shown  in  Figure \ref{cov02}.
The hexagon  $\{1,2\}, \{2,3\}, \{3,4\}, \{4,5\}, \{5,6\}, \{6,1\}$ is a covering space of 
the hollow triangle  $\{1',2',3'\}$, and  the covering map $\varphi$, is given by 
$\varphi(\{1\})=\varphi(\{4\})=\{1'\}$, $\varphi(\{2\})=\varphi(\{5\})=\{2'\}$, and $\varphi(\{3\})=\varphi(\{6\})=\{3'\}$.
The simplicial complex $\tilde{L}=\{\{1,2\}, \{2,3\}, \{3,4\}, \{4,5\}, \{1\}, \{2\},\{3\},\{4\},\{5\} \}$ in Figure \ref{cov05} is a covering of 
 $L=\{\{1',2'\}, \{2',3'\}, \{1',3'\}, \{1',5'\}, \{1'\}, \{2'\},\{3'\},\{5'\}\}$. The covering map is given by
 $\varphi(\{1\})=\varphi(\{4\})=\{1'\}$,  $\varphi(\{2\})=\{2'\}$,  $\varphi(\{3\})=\{3'\}$ and  $\varphi(\{5\})=\{5'\}$.
\end{exmp}
Note that in the case of covering maps the weight functions in (\ref{rule (ii)}) and  (\ref{rule (i)}) are identical.
  \begin{figure}[h!tp]
      \label{coverings}
  \begin{center}
 \subfigure[raggedright,scriptsize][ ]{\label{cov01}\includegraphics[scale=0.4]{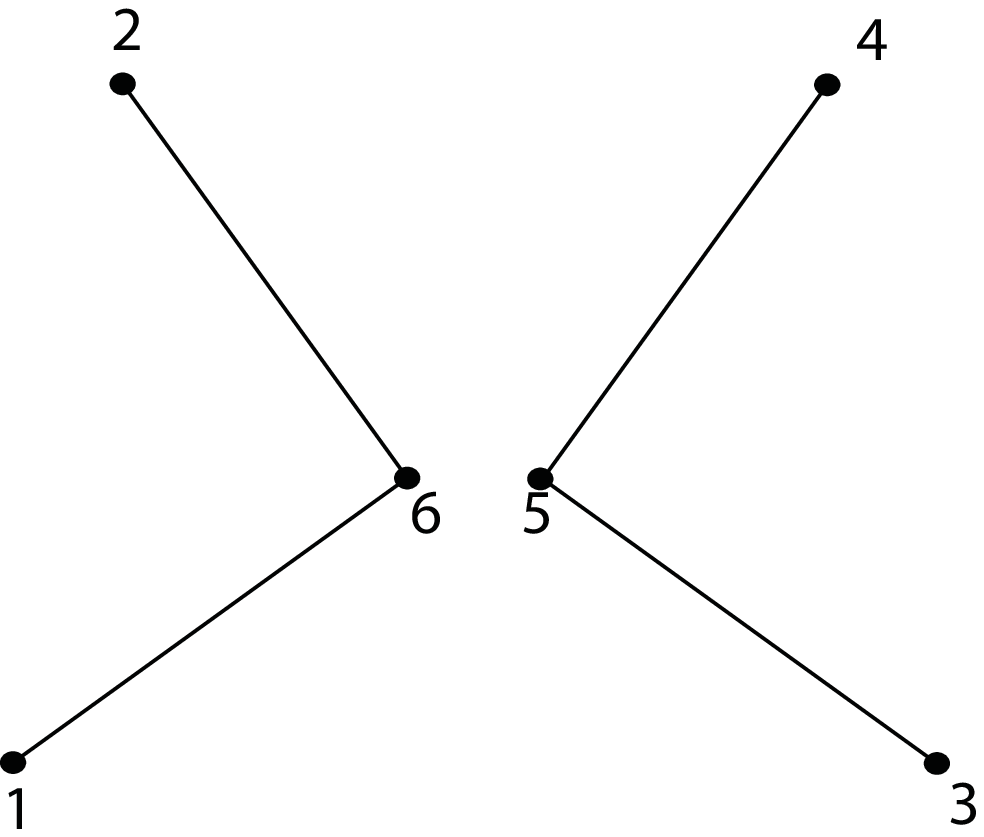}} \qquad\qquad
    \subfigure[raggedright,scriptsize][  ]{\label{cov02}\includegraphics[scale=0.4]{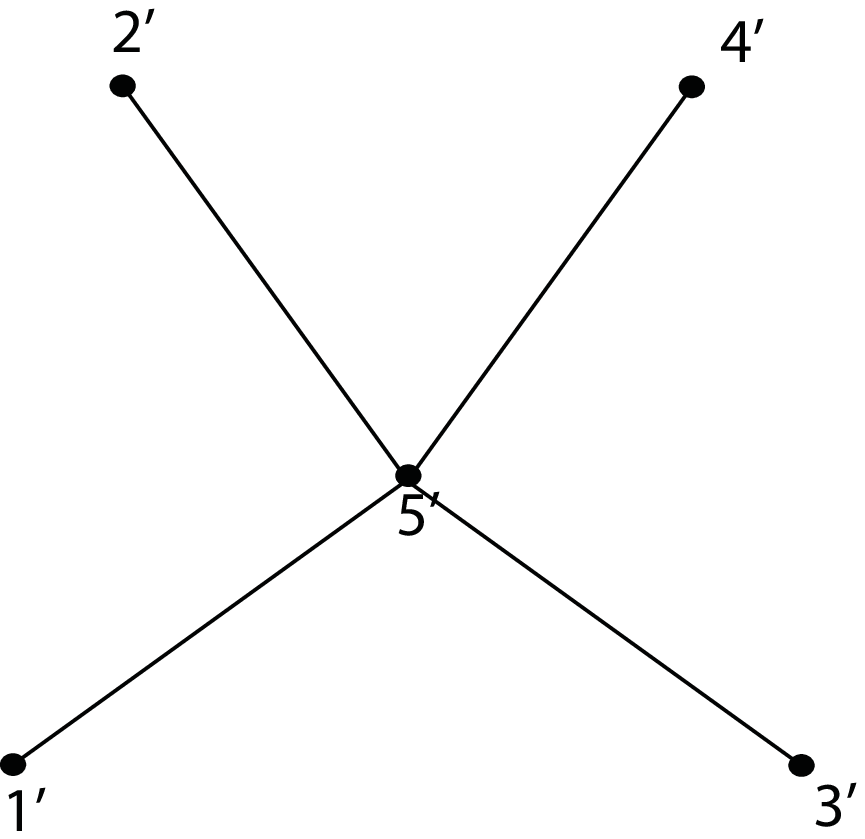}}\\
    \subfigure[raggedright][   ]{\label{cov03}\includegraphics[scale=0.4]{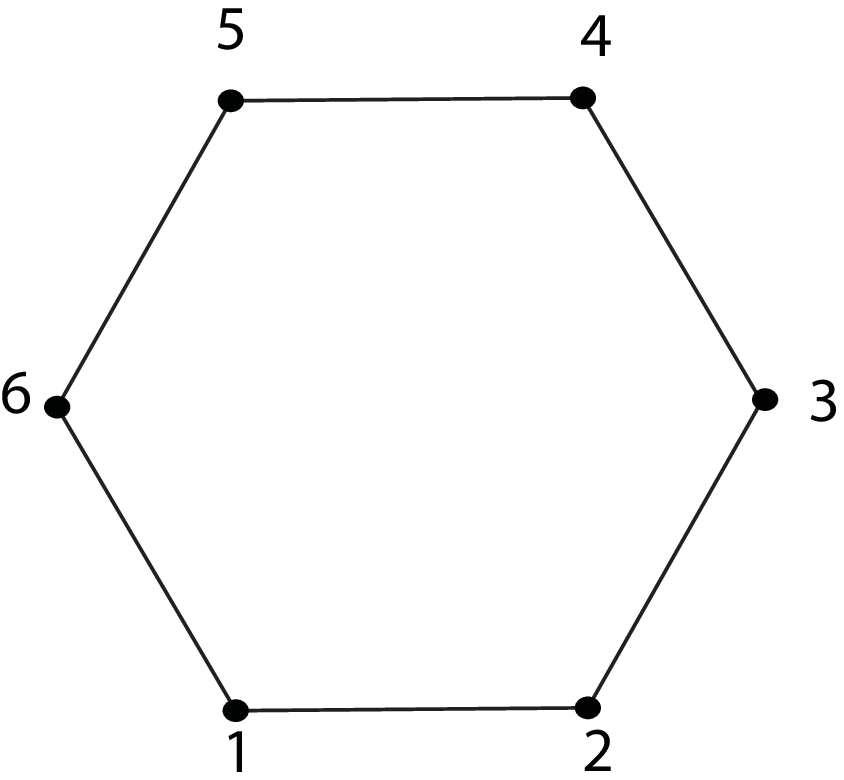}}\qquad\qquad
    \subfigure[raggedright][ ]{\label{cox04}\includegraphics[scale=0.45]{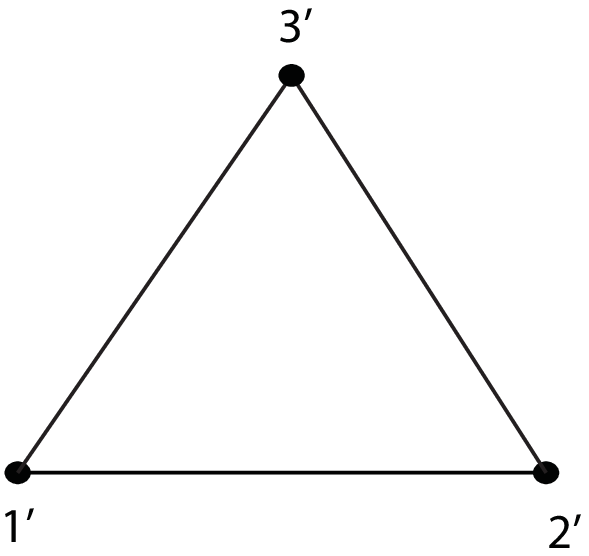}}\\
    \subfigure[raggedright][   ]{\label{cov05}\includegraphics[scale=0.4]{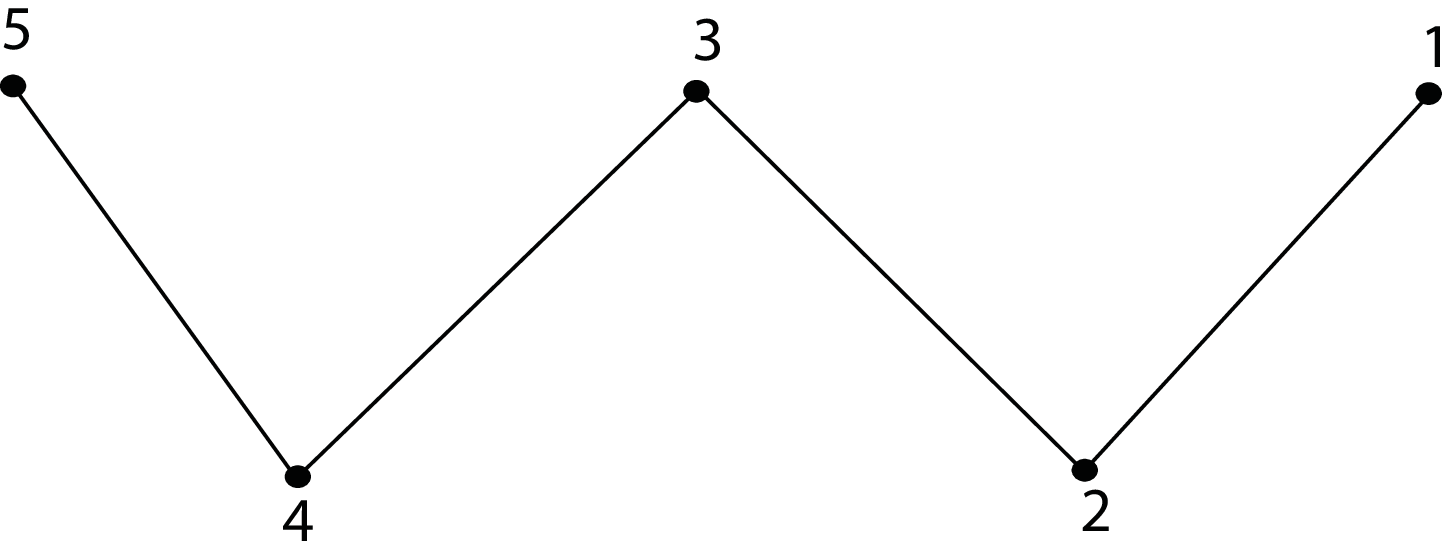}}\qquad
    \subfigure[raggedright][   ]{\label{cov06}\includegraphics[scale=0.4]{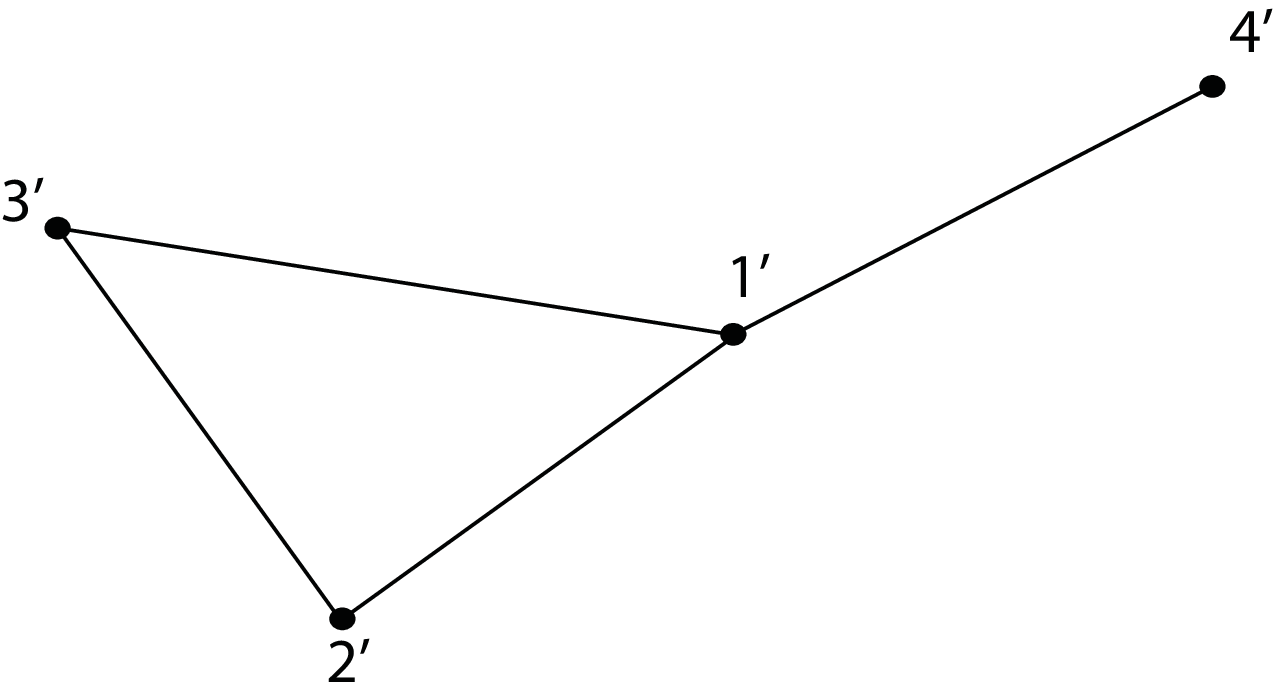}}
  \end{center}
  \caption{Coverings}
\end{figure} 

\begin{lem}
Let $\varphi:K\rightarrow L$ be a covering map, and let $w_{K}$ and $w_{L}$ be weight functions  satisfying (\ref{rule (ii)}).
Then, diagram (\ref{diagram simplicial map}) commutes.
\end{lem}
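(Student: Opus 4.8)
The goal is to prove that the operator $\varphi^{*}\mathcal{L}^{up}_{n}(K)\varphi$ on $C^{n}(L,\rb)$ equals $\mathcal{L}^{up}_{n}(L)$, which is exactly the commutativity of the square (\ref{diagram simplicial map}). The plan is to reduce this to two clean structural facts, deduce the lemma from them, and then, to match the explicit expression announced in (\ref{varphi equation}), cross-check by a direct term-by-term expansion. The two ingredients are: (a) \emph{functoriality of the coboundary}, and (b) the identity $\varphi^{*}\varphi=\mathrm{id}$ coming from the weight rule (\ref{rule (ii)}).

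First I would establish (a) and (b). For (a): since $\varphi$ is simplicial and a covering map preserves dimension (no face is collapsed), the induced cochain map satisfies the usual relation $\delta_{K}\varphi=\varphi\delta_{L}$ with no degenerate terms, and taking adjoints gives $\varphi^{*}\delta_{K}^{*}=\delta_{L}^{*}\varphi^{*}$. For (b): using $\varphi e_{[\bar G]}=\sum_{\bar F:\varphi(\bar F)=\bar G}\sgn([\bar F],[\bar G])e_{[\bar F]}$ together with $\varphi^{*}e_{[\bar F]}=\sgn([\bar F],[\bar G])\tfrac{w_{K}(\bar F)}{w_{L}(\bar G)}e_{[\bar G]}$, the sign factors square to $1$ and one finds $\varphi^{*}\varphi e_{[\bar G]}=\big(\sum_{\bar F:\varphi(\bar F)=\bar G}w_{K}(\bar F)/w_{L}(\bar G)\big)e_{[\bar G]}$, which is $e_{[\bar G]}$ precisely by (\ref{rule (ii)}). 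Here the covering hypothesis is used to guarantee that $\varphi\lvert_{\bar F}$ is a bijection onto $\bar G$, so $\varphi(\bar F)$ is a genuine $(n+1)$-simplex and each $\sgn([\bar F],[\bar G])$ is $\pm1$. Granting (a) and (b), the lemma is immediate:
\begin{equation*}
\varphi^{*}\mathcal{L}^{up}_{n}(K)\varphi=\varphi^{*}\delta_{K}^{*}\delta_{K}\varphi=\delta_{L}^{*}\varphi^{*}\varphi\,\delta_{L}=\delta_{L}^{*}\delta_{L}=\mathcal{L}^{up}_{n}(L).
\end{equation*}
The virtue of this route is that the orientation signs are carried along automatically, so no case analysis is needed.

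To instead reproduce the explicit formula (\ref{varphi equation}), I would expand directly: substitute $\varphi e_{[G]}=\sum_{\varphi(F)=G}\sgn([F],[G])e_{[F]}$, apply the up-Laplacian of $K$ (whose coefficient of $e_{[F']}$ carries the denominator $w_{K}(F')$), and then apply $\varphi^{*}$. The factor $w_{K}(F')$ produced by $\mathcal{L}^{up}_{n}(K)$ cancels against the $w_{K}(F')$ in $\varphi^{*}e_{[F']}$, leaving exactly the coefficient $w_{K}(\bar F)/w_{L}(G')$ of (\ref{varphi equation}). To identify this with $\mathcal{L}^{up}_{n}(L)$ I would use the covering property as a local isomorphism: for each preimage $F$ of $G$ and each coface $\bar G\supset G$ in $L$ there is a unique coface $\bar F\supset F$ with $\varphi(\bar F)=\bar G$, and a unique $F'\in\partial\bar F$ lying over each $G'\in\partial\bar G$. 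This converts the sum over preimages into a sum over cofaces of $G$ in $L$, and $\sum_{\varphi(F)=G}w_{K}(\bar F)=w_{L}(\bar G)$ by (\ref{rule (ii)}), yielding the entries $w_{L}(\bar G)/w_{L}(G')$ of $\mathcal{L}^{up}_{n}(L)$.

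The hard part will be purely the bookkeeping of orientations in this direct computation: one must verify, for each matched quadruple, the identity $\sgn([F],[G])\sgn([F],\partial\bar F)\sgn([F'],\partial\bar F)\sgn([F'],[G'])=\sgn([G],\partial\bar G)\sgn([G'],\partial\bar G)$, i.e. that $\varphi$ transports the boundary orientations of $\bar F$ to those of $\bar G$ consistently. This is exactly the point that functoriality handles invisibly in the slick route, so I would isolate this sign identity as the single nontrivial verification and otherwise rest the argument on ingredients (a) and (b).
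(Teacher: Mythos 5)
Your primary argument is correct, but it follows a genuinely different route from the paper's. The paper proves the lemma by direct computation: it expands $\varphi^{*}\mathcal{L}^{up}(K)\varphi e_{[G]}$, establishes the orientation identity $\sgn([G],\partial [\bar{G}])=\sgn([F],\partial [\bar{F}])\sgn([F],[G])\sgn([\bar{F}],[\bar{G}])$, regroups the double sum over pairs $(F,\bar{F})$ as a sum over cofaces $\bar{G}$ of $G$ and preimages $\bar{F}\in\varphi^{-1}(\bar{G})$, and invokes (\ref{rule (ii)}) in dimension $n+1$ to replace $\sum_{\bar{F}\in\varphi^{-1}(\bar{G})}w_{K}(\bar{F})$ by $w_{L}(\bar{G})$. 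Your factorization $\varphi^{*}\delta_{K}^{*}\delta_{K}\varphi=\delta_{L}^{*}(\varphi^{*}\varphi)\delta_{L}=\delta_{L}^{*}\delta_{L}$ reaches the same conclusion from two structural facts, namely functoriality $\delta_{K}\varphi=\varphi\delta_{L}$ (true for every simplicial map) and $\varphi^{*}\varphi=\mathrm{id}$ on $C^{n+1}(L,\rb)$ (which, since the sign factors square to $1$, is exactly rule (\ref{rule (ii)}) applied to $(n+1)$-faces), and thereby hides all orientation bookkeeping inside standard adjoint algebra. This is cleaner than the paper's computation and makes transparent which hypothesis enters where; note that both you and the paper need (\ref{rule (ii)}) in dimension $n+1$ and not just $n$, which is legitimate since the rule is stated for a generic dimension.

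Your secondary, term-by-term cross-check, however, contains a misstatement worth flagging because it touches the central distinction of Section \ref{section 3}. You claim that for each preimage $F$ of $G$ and each coface $\bar{G}\supset G$ in $L$ there is a \emph{unique} coface $\bar{F}\supset F$ with $\varphi(\bar{F})=\bar{G}$. The uniqueness half is true (distinct simplices of $\varphi^{-1}(\bar{G})$ are pairwise disjoint, so they cannot share the facet $F$), but the existence half is precisely the strong covering property of Definition \ref{definition strong covering}, which the paper shows does \emph{not} follow from Definition \ref{definition covering}: in the covering of Figure \ref{cov05}/Figure \ref{cov06}, the vertex $1$ lies over $1'$ yet has no edge over $\{1',5'\}$. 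The correct bookkeeping goes the other way around: each $\bar{F}\in\varphi^{-1}(\bar{G})$ contains exactly one facet over $G$ (because $\varphi\lvert_{\bar{F}}$ is a bijection onto $\bar{G}$), so the pairs $(F,\bar{F})$ occurring in the expansion biject with pairs $(\bar{G},\bar{F})$ with $\bar{F}\in\varphi^{-1}(\bar{G})$, and rule (\ref{rule (ii)}) then produces $w_{L}(\bar{G})$ regardless of whether some preimages of $G$ admit no coface over $\bar{G}$. With this repair your expansion coincides with the paper's proof; since your primary route never uses the false existence claim, the lemma stands as you proved it.
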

\begin{proof}
Let $[\bar{F}]=[v_{0},\ldots,v_{n+1}]$ be a positively  oriented $(n+1)$-simplex in $K$, and   $[F]=[v_{0},\ldots,\hat{v_{i}},\ldots v_{n+1}]$,  $[F']=[v_{0},\ldots,\hat{v_{j}},\ldots v_{n+1}]$ its oriented facets. 
Let $\bar{G}=\{\varphi(v_{0}),\ldots, \varphi(v_{n+1})\}$, $G=\{\varphi(v_{0}),\ldots, \hat{\varphi(v_{i})},\ldots,\varphi(v_{n+1})\}$, and 
$G'=\{\varphi(v_{0}),\ldots, \hat{\varphi(v_{j})},\ldots,\varphi(v_{n+1})\}$ be the images of $\bar{F},F$, and $,F'$ under the covering map $\varphi$,  respectively. Assume $[\bar{G}]$, $[G]$, and $[G']$ are positively oriented. Then, 
$$[\bar{G}]=[\varphi(v_{0}),\ldots, \varphi(v_{n+1})]\sgn([\bar{F}],[\bar{G}]),$$  $$[G]=[\varphi(v_{0}),\ldots, \hat{\varphi(v_{i})},\ldots,\varphi(v_{n+1})]\sgn([F],[G])$$ and $$[G']=[\varphi(v_{0}),\ldots, \hat{\varphi(v_{j})},\ldots,\varphi(v_{n+1})]\sgn([F],[G]).$$
Therefore,
\small
\begin{align}
\label{equation signs}
\sgn([G],\partial [\bar{G}])=& \sgn\left ([\varphi(v_{0}),\ldots, \hat{\varphi(v_{i})},\ldots,\varphi(v_{n+1})]\sgn([F],[G]), [\varphi(v_{0}),\ldots, \varphi(v_{n+1})]\sgn([\bar{F}],[\bar{G}])\right)\\
=& (-1)^{i}\sgn([F],[G])\sgn([\bar{F}],[\bar{G}])\\
=& \sgn([F],\partial [\bar{F}])\sgn([F],[G])\sgn([\bar{F}],[\bar{G}]).
\end{align}
\normalsize
Hence, (\ref{varphi equation}) equals to
\begin{align}
\varphi^{*} \mathcal{L}^{up}(K)\varphi e_{[G]}=&\sum_{\substack{F\in S_{n}(K):\\ \varphi(F)=G}}\;\sum_{\substack{F'\in S_{n}(K):\\ F,F'\in  \partial \bar{F}}} \sgn([G],\partial [\bar{G}])\sgn([G'],\partial [\bar{G}])\frac{w_{K}(\bar{F})}{w_{L}(G')}e_{[G']}\label{eqX}\\
=&\sum_{\substack{F\in S_{n}(K):\\ \varphi(F)=G}}\;
\sum_{\substack{\bar{F}\in S_{n+1}(K):\\ F\in \partial \bar{F}}} w_{K}(\bar{F}) \sum_{F': F'\in \partial \bar{F} } \sgn([G],\partial [\bar{G}])\sgn([G'],\partial [\bar{G}])\frac{1}{w_{L}(G')}e_{[G']}\label{contraction equation},
\end{align}
where $\varphi(F')=G'$.
Since $w_{L}(G)=\sum_{\substack{F\in S_{n}(K):\\ \varphi(F)=G}}w_{K}(F)$,  from (\ref{eqX}) we get
\begin{align}
\varphi^{*} \mathcal{L}^{up}(K)\varphi e_{[G]}
=&\sum_{\substack{G'\in S_{n}(L):\\ G,G'\in \partial \bar{G}}}
\sgn([G],\partial [\bar{G}])\sgn([G]',\partial [\bar{G}])  \frac{w_{L}(\bar{G})}{w_{L}(G')}e_{[G']}\\
=&\mathcal{L}^{up}e_{[G]}.
\end{align}
Thus,  diagram (\ref{diagram simplicial map}) commutes.
\end{proof}

Let $\mathcal{V}_{k}$ be a $k$-dimensional subspace of $C^{n}(L,\rb)\cong\rb^{N_{L}}$,  then according to Theorem \ref{min-max theorem} we have
\begin{align}
\theta_{k}=&\max_{\mathcal{V}_{k-1}}\min_{g\perp \mathcal{V}_{k-1}}\frac{(\mathcal{L}^{up}(L)g,g)}{(g,g)}\\
=&\max_{\mathcal{V}_{k-1}}\min_{g\perp \mathcal{V}_{k-1}}\frac{(\varphi^{*}\mathcal{L}^{up}(K)\varphi g,g)}{(g,g)}\\
=&\max_{\mathcal{V}_{k-1}}\min_{g\perp \mathcal{V}_{k-1}}\frac{(\mathcal{L}^{up}(K)\varphi g,\varphi g)}{(\varphi g,\varphi g)}\label{ineq M01}.
\end{align}
Equality (\ref{ineq M01}) is due to 
\begin{align*}
(\varphi g,\varphi g) =& \sum_{G\in S_{n}(L)}\sum_{\substack{F\in S_{n}(K):\\ \varphi(F)=G}}(\sgn([F],[G])g(\varphi F))^{2}w_{K}(F)\\
=& \sum_{G\in S_{n}(L)} g(G)^{2}w_{L}(G)=(g,g).
\end{align*}
Let $\mathcal{W}$ be a vector space generated by 
$\{\sgn([F_{ji}],[G_{i}])e_{[F_{ji}]}-\sgn([F_{(i+1)j}],[G_{i}])e_{[F_{(i+1)j}]}\mid \bigcup_{j} F_{ji}=\varphi^{-1}(G_{i}), \textrm{ and  }   \bigcup_{i} G_{i}=S_{n}( L)\}$. 
The dimension of  $\mathcal{W}$ is $N_{K}-N_{L}$, where $N_{K}$,
$N_{L}$  is the number of $n$-dimensional simplices in the simplicial complexes $K$ and  $L$, respectively.
Let $\varphi g=f$ and let $\mathcal{V}_{k-1}$ be  a subspace of $\rb^{N_{K}}$.  From (\ref{ineq M01}) we get
\begin{align}
\theta_{k}=&\max_{\mathcal{V}_{k-1}}\min_{f\perp \mathcal{V}_{k-1}, f\perp \mathcal{W}}\frac{(\mathcal{L}^{up}(K)f,f)}{(f,f)}\label{ineq M02}\\
\geq & \max_{\mathcal{V}_{k-1}}\min_{f\perp \mathcal{V}_{k-1} }\frac{(\mathcal{L}^{up}(K)f,f)}{(f,f)}\label{ineq M03}\\
\geq & \lambda_{k}.
\end{align}
Inequality (\ref{ineq M03}) holds since we are minimizing over a larger set.
As a broader optimization,  from (\ref{ineq M02}) we obtain the upper interlacing inequality, i.e.,
\begin{align}
\theta_{k}\leq & \max_{\mathcal{V}_{k-1+N_{K}-N_{L}}}\min_{f\perp \mathcal{V}_{k-1+N_{K}-N_{L}}}\frac{(\mathcal{L}^{up}(K)f,f)}{(f,f)}\label{specCase}\\
=& \lambda_{k+N_{K}-N_{L}},
\end{align}
and we obtain the following theorem.
\begin{thm}
\label{Interlacing covering}
Let $\varphi: K\rightarrow L$ be a covering map, and let $w_{K}$ and $w_{L}$ be weight functions of $K$ and $L$, respectively, such that 
(\ref{rule (ii)}) holds.
Let $\lambda_{1}\leq \lambda_{2}\leq \ldots \leq \lambda_{N_{K}}$ and
$\theta_{1}\leq \theta_{2}\leq \ldots \leq \theta_{N_{L}}$ be the eigenvalues
of $\mathcal{L}^{up}_{n}(K)$ and $\mathcal{L}^{up}_{n}(L)$, respectively.
Then,
\begin{equation}
\lambda_{k}\leq\theta_{k}\leq\lambda_{k+N_{K}-N_{L}},
\end{equation}
where $\lambda_{N_{K}+1}=\ldots=\lambda_{2N_{K}-N_{L}}=N$, where $N$ is the number of vertices of  $K$.
\end{thm}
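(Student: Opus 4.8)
The plan is to read the interlacing directly off the just-verified commutativity of diagram (\ref{diagram simplicial map}), transporting the Rayleigh quotient of $\mathcal{L}^{up}_{n}(L)$ to that of $\mathcal{L}^{up}_{n}(K)$ along the pullback $\varphi$. First I would record the isometry $(\varphi g,\varphi g)=(g,g)$: expanding the left-hand side and grouping the $n$-simplices $F$ over each fiber $\varphi^{-1}(G)$, the weights collapse via rule (\ref{rule (ii)}), namely $\sum_{\varphi(F)=G}w_{K}(F)=w_{L}(G)$, so that $(\varphi g,\varphi g)=\sum_{G}g(G)^{2}w_{L}(G)=(g,g)$. Combined with commutativity of (\ref{diagram simplicial map}), this gives $\mathcal{R}_{\mathcal{L}^{up}_{n}(L)}(g)=\mathcal{R}_{\mathcal{L}^{up}_{n}(K)}(\varphi g)$, so $\varphi$ embeds $C^{n}(L,\rb)$ isometrically into $C^{n}(K,\rb)$ and intertwines the two Laplacians.

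Next I would identify the image of $\varphi$ with the orthogonal complement of the subspace $\mathcal{W}$ spanned by the signed fiber-differences $\sgn([F_{ji}],[G_{i}])e_{[F_{ji}]}-\sgn([F_{(i+1)j}],[G_{i}])e_{[F_{(i+1)j}]}$. A pulled-back cochain is, up to these signs, constant on each fiber, hence orthogonal to every such difference; conversely $\mathcal{W}^{\perp}$ consists exactly of the fiber-symmetric cochains, so $\im\varphi=\mathcal{W}^{\perp}$. A fiber of cardinality $m_{i}$ contributes $m_{i}-1$ independent generators, and summing over all fibers gives $\dim\mathcal{W}=\sum_{i}(m_{i}-1)=N_{K}-N_{L}$.

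The interlacing then follows from the Courant--Fischer principle (Theorem \ref{min-max theorem}) in the form $\theta_{k}=\max_{\mathcal{V}_{k-1}}\min_{g\perp\mathcal{V}_{k-1}}\mathcal{R}_{\mathcal{L}^{up}_{n}(L)}(g)$. Substituting $f=\varphi g$, the optimization over $g\in C^{n}(L,\rb)$ turns into the constrained optimization $\theta_{k}=\max_{\mathcal{V}_{k-1}}\min_{f\perp\mathcal{V}_{k-1},\,f\perp\mathcal{W}}\mathcal{R}_{\mathcal{L}^{up}_{n}(K)}(f)$. For the lower bound I would drop the constraint $f\perp\mathcal{W}$; this enlarges the domain of the inner minimization, so the min can only decrease, and taking the outer maximum gives $\theta_{k}\geq\lambda_{k}$. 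For the upper bound I would absorb the $\dim\mathcal{W}=N_{K}-N_{L}$ linear constraints into the outer maximization: since $\mathcal{V}_{k-1}+\mathcal{W}$ has dimension at most $k-1+N_{K}-N_{L}$, it embeds into a test space of exactly that dimension, and $\lambda_{k+N_{K}-N_{L}}$ dominates the corresponding min, yielding $\theta_{k}\leq\lambda_{k+N_{K}-N_{L}}$. Finally I would supply the padding convention $\lambda_{N_{K}+1}=\dots=N$, needed whenever $k+N_{K}-N_{L}>N_{K}$, justified by the bound that every eigenvalue of the combinatorial up-Laplacian is at most the number $N$ of vertices of $K$.

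The main obstacle I anticipate is the bookkeeping in the last two steps: correctly establishing $\im\varphi=\mathcal{W}^{\perp}$ with the signs $\sgn([F],[G])$ tracked faithfully through the fiber-difference generators, and then converting the single constraint $f\perp\mathcal{W}$ into exactly $N_{K}-N_{L}$ index shifts in the min-max without an off-by-one error. Everything else is a direct transport of the spectrum along the isometric intertwiner $\varphi$.
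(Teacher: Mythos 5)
Your proof follows essentially the same route as the paper's: the isometry $(\varphi g,\varphi g)=(g,g)$ coming from rule (\ref{rule (ii)}), the identification of the pulled-back cochains with the orthogonal complement of the fiber-difference space $\mathcal{W}$ of dimension $N_{K}-N_{L}$, and the min-max argument that drops the constraint $f\perp\mathcal{W}$ for the lower bound and absorbs it into an enlarged test space for the upper bound. The only caveat---shared with the paper itself---is that $\im\varphi=\mathcal{W}^{\perp}$ as you state it holds for the unweighted pairing (or when $w_{K}$ is constant on each fiber); with the weighted inner product one should simply define $\mathcal{W}=(\im\varphi)^{\perp}$, which has the same dimension $N_{K}-N_{L}$, and your argument goes through unchanged.
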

Note that, however, covering complexes are an inaccurate
discretization of covering spaces (continuous setting), since
Definition \ref{definition covering} does not contain a discrete analogue of the  homeomorphic neighbourhood requirement (i) from Definition \ref{definition topological covering }. As an example see Figure \ref{cov05}  and Figure \ref{cov06}.
In what follows, we provide the definition of a strong covering, which
accurately discretizes the notion of covering from the continuous setting and overcomes this problem. 
\begin{defn}
\label{definition strong covering}
A covering map $\varphi:K\rightarrow L$ is called a \emph{strong} covering if 
for every $G$, which is a facet of $\bar{G}$ and every $F\in \varphi^{-1}(G)$, there exists $\bar{F}\in S_{n+1}(K)$ such that 
$F\in \partial \bar{F}$ and $\varphi(\bar{F})=\bar{G}$.
\end{defn}
\begin{rem}
\label{remark strong covering}
According to the previous definition for any two $n$-faces of $L$,   $G$ and $G'$, which  are $(n+1)$-up neighbours, and for every 
$F\in \varphi^{-1}(G)$,  there must exist  $F'\in S_{n}(K)$ such that 
$F$ and $F'$ are  $(n+1)$-up neighbours and $\varphi(F')=G'$.
\end{rem}
\begin{lem}
\label{lemma coverings}
Let $\varphi: K\rightarrow L$ be a strong covering. Then, for every  $F\in S_{n}(K)$ and every 
$G\in S_{n}(L)$, such that $\varphi(F)=G$, the following holds
\[
\lvert\{ \bar{F}\in S_{n+1}(K)\mid F\in \partial \bar{F}\} \lvert= \lvert\{ \bar{G}\in S_{n+1}(L)\mid G\in \partial \bar{G}\} \lvert.
\]
\end{lem}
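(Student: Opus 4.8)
The plan is to prove the equality of cardinalities by exhibiting an explicit bijection between the two sets. Write $A := \{\bar{F}\in S_{n+1}(K)\mid F\in \partial\bar{F}\}$ for the set of $(n+1)$-up neighbours of $F$ in $K$, and $B := \{\bar{G}\in S_{n+1}(L)\mid G\in \partial\bar{G}\}$ for the $(n+1)$-up neighbours of $G$ in $L$. I would define the candidate map $\Phi\colon A\to B$ by $\Phi(\bar{F})=\varphi(\bar{F})$ and then verify that it is well defined, injective, and surjective, the last of these being the place where the \emph{strong} covering hypothesis is indispensable.

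First I would check that $\Phi$ lands in $B$. The key preliminary fact is that a covering map is injective on the vertex set of any single simplex: if $\sigma\in K$, then $\sigma$ is connected and lies inside $\varphi^{-1}(\varphi(\sigma))$, which by Definition \ref{definition covering}(iii) splits into pairwise disjoint simplices each mapped bijectively onto $\varphi(\sigma)$; hence $\sigma$ is contained in a single such piece and $\varphi|_{\sigma}$ is injective. Consequently $\varphi$ carries the $(n+1)$-simplex $\bar{F}$ to a genuine $(n+1)$-simplex $\varphi(\bar{F})$ of $L$, and since $F$ is a facet of $\bar{F}$ with $\varphi(F)=G$, the image $G=\varphi(F)$ is a facet of $\varphi(\bar{F})$. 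Thus $\varphi(\bar{F})\in B$ and $\Phi$ is well defined.

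For injectivity I would argue as follows. Suppose $\bar{F}_1,\bar{F}_2\in A$ satisfy $\varphi(\bar{F}_1)=\varphi(\bar{F}_2)=\bar{G}$. Both lie in $\varphi^{-1}(\bar{G})$ and both contain $F$ as a facet. By Definition \ref{definition covering}(iii) the preimage $\varphi^{-1}(\bar{G})$ is a disjoint union of $(n+1)$-simplices each mapped bijectively onto $\bar{G}$; since $\bar{F}_1$ and $\bar{F}_2$ are themselves $(n+1)$-simplices mapping bijectively onto $\bar{G}$, each coincides with one of these pieces. As they share the face $F$ and the pieces are disjoint, they must be the same piece, whence $\bar{F}_1=\bar{F}_2$.

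The crux is surjectivity, and this is precisely what the strong covering condition delivers. Given $\bar{G}\in B$, the face $G$ is a facet of $\bar{G}$ and $F\in\varphi^{-1}(G)$; Definition \ref{definition strong covering} then furnishes some $\bar{F}\in S_{n+1}(K)$ with $F\in\partial\bar{F}$ and $\varphi(\bar{F})=\bar{G}$, that is, $\bar{F}\in A$ with $\Phi(\bar{F})=\bar{G}$. Hence $\Phi$ is onto, and combining the three steps shows that $\Phi$ is a bijection, so $\lvert A\rvert=\lvert B\rvert$. I expect surjectivity to be the only genuine obstacle: for an ordinary, non-strong covering it can fail, since an up-neighbour $\bar{G}$ of $G$ need not lift to an up-neighbour of a prescribed $F\in\varphi^{-1}(G)$, as the examples in Figures \ref{cov05} and \ref{cov06} illustrate. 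The entire content of the lemma therefore rests on the lifting property built into Definition \ref{definition strong covering}.
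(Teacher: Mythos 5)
Your proof is correct and follows essentially the same route as the paper's: both establish that $\bar{F}\mapsto\varphi(\bar{F})$ is a well-defined injection from the up-neighbours of $F$ into those of $G$ using the covering property, and then invoke the strong covering condition to lift each $\bar{G}\supset G$ to an up-neighbour of $F$, yielding surjectivity. Your write-up is somewhat more explicit than the paper's (in particular in justifying injectivity via the disjointness of the pieces of $\varphi^{-1}(\bar{G})$ sharing the common facet $F$), but the argument is the same.
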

\begin{proof}
Assume $\bar{F}_{1},\ldots, \bar{F}_{k}\in S_{n+1}(K)$, such that $ F\in \partial \bar{F}_{i}$, $i\in \{1,\ldots,k\}$. Then, due to the definition of a covering (Definition \ref{definition covering}), 
$\varphi(\bar{F}_{i})\in S_{n+1}(L)$ and $ \varphi(F)\in \partial \varphi(\bar{F}_{i})$, and  $\varphi(\bar{F}_{i})\neq \varphi(\bar{F}_{j}) $, for every 
$i\neq j$.
Assume $\bar{G}\in S_{n+1}(L)$, such that $ G\in \partial
\bar{G}$. Since $\varphi$ is a strong covering and satisfies
Definition \ref{definition strong covering}, then there exists
$\bar{F}\in S_{n+1}(K)$, such that $ F\in \partial \bar{F}$, such that
$\varphi(\bar{F})=\bar{G}$. In addition, 
if $ G\in \partial \bar{G},\partial \bar{G'} $ and $\bar{G}\neq \bar{G'}$,
 then  $\bar{F}\neq \bar{F'}$ (simply because $\varphi$ is a map from $K$ to $L$).
\end{proof}
\begin{lem}
\label{lemma constant coverings}
Let $\varphi: K\rightarrow L$ be a strong covering, then there exists a constant $c\in \mathbb{N}$, such that 
\[
\lvert\{ F\in S_{n}(K)\mid F\in \varphi^{-1}(G),  G\in S_{n}(L)\} \lvert= c
\]
for every $F\in K$.
The quantity $c$ is also called the \emph{degree of the covering}.
\end{lem}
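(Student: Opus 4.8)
The plan is to show that the fibre size $|\varphi^{-1}(G)|$ (the number of $n$-simplices of $K$ mapping onto $G$, which by the covering property equals the number of pairwise disjoint sheets over $G$) does not depend on the chosen $G\in S_{n}(L)$; this common value is then the constant $c$. I would first prove a \emph{local constancy} statement: if $G$ and $G'$ are $(n+1)$-up neighbours, i.e. both facets of a common $\bar{G}\in S_{n+1}(L)$, then $|\varphi^{-1}(G)|=|\varphi^{-1}(G')|$. I would then propagate this equality across $L$ by connectivity.

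For the local step, the key input is the proof of Lemma \ref{lemma coverings}, which in fact exhibits a \emph{bijection} $\bar{F}\mapsto\varphi(\bar{F})$ between the set of $(n+1)$-cofaces of $F$ and the set of $(n+1)$-cofaces of $G=\varphi(F)$: it is injective because $\varphi$ is a covering (distinct cofaces of $F$ have distinct images), and surjective by the strong covering condition of Definition \ref{definition strong covering}. In particular, for each $F\in\varphi^{-1}(G)$ there is a \emph{unique} $\bar{F}\in S_{n+1}(K)$ with $F\in\partial\bar{F}$ and $\varphi(\bar{F})=\bar{G}$. Since the covering property makes $\varphi|_{\bar{F}}\colon\bar{F}\to\bar{G}$ a bijection on vertices, this $\bar{F}$ has exactly one facet $F'$ with $\varphi(F')=G'$, and I would define $\Phi\colon\varphi^{-1}(G)\to\varphi^{-1}(G')$ by $\Phi(F)=F'$, together with the symmetric map $\Phi'$. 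The uniqueness of the lift makes $\Phi$ and $\Phi'$ mutually inverse: applying $\Phi'$ to $F'$ recovers the same $\bar{F}$ (it is a coface of $F'$ mapping to $\bar{G}$, hence \emph{the} coface, by uniqueness), whose unique $G$-facet is $F$. Thus $\Phi$ is a bijection and the two fibres have equal cardinality.

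To globalize, I would fix $G_{0}\in S_{n}(L)$, put $c:=|\varphi^{-1}(G_{0})|$, and observe that the local step shows $G\mapsto|\varphi^{-1}(G)|$ is constant on each class of $n$-simplices linked through chains of $(n+1)$-up neighbours. Invoking the connectivity of $K$ (equivalently of $L$, since $\varphi$ is a surjective simplicial map) then forces a single such class, so that $|\varphi^{-1}(G)|=c$ for every $G\in S_{n}(L)$.

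The main obstacle is precisely this globalization step. The connectivity assumed in Definition \ref{definition covering} is $1$-dimensional (path connectivity through edges), whereas the local step only links $n$-simplices that are $(n+1)$-up neighbours. For $n=0$ the two notions coincide and the argument goes through verbatim. For $n\geq 1$ one must ensure that the $n$-simplices of $L$ are connected through $(n+1)$-up neighbours, i.e. that $L$ is $(n+1)$-path connected in the sense introduced in Section \ref{section 1}; this is the hypothesis that has to be made explicit and is the delicate part of the proof. It is also exactly the place where the failure of the naive covering notion, illustrated by Figures \ref{cov05} and \ref{cov06}, is excluded: the strong covering condition, through the bijection furnished by Lemma \ref{lemma coverings}, is what prevents the fibre size from dropping along an up-neighbour step.
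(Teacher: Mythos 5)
Your proposal takes essentially the same route as the paper's proof: local equality of fibre cardinalities across $(n+1)$-up neighbours obtained from the strong covering condition, then propagation by $(n+1)$-path connectedness --- you are merely more careful in upgrading the local correspondence to an explicit pair of mutually inverse maps. The globalization issue you flag is real, but it is a gap in the paper's own argument too: its proof simply asserts ``Since $L$ is an $(n+1)$-path connected simplicial complex,'' even though this hypothesis appears neither in the statement of the lemma nor in Definition \ref{definition strong covering}, so your explicit identification of it as a needed assumption is, if anything, an improvement.
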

\begin{proof}
Let $G$ and $G'$ be $(n+1)$-up neighbours, i.e., there exists $\bar{G}\in S_{n+1}(L)$, s.t. $G,G'\in \partial \bar{G}$.
Due to Definition \ref{definition strong covering}, 
for every $F\in \varphi^{-1}(G)$, there exists $\bar{F}$, s.t. $\varphi(\bar{F})=\bar{G}$. Thus, there exists 
$F'\in \partial \bar{F}$ which is in the preimage of  $G'$ under the covering map $\varphi$.
Therefore, if  $G$ and $G'$ are $(n+1)$-up neighbours, then
$\lvert\{F\in S_{n}(K)\mid F\in \varphi^{-1}(G) \} \lvert= \lvert\{ F\in S_{n}(K)\mid F'\in \varphi^{-1}(G')\} \lvert$.
Since  $L$ is  an $(n+1)$-path connected simplicial complex, then for  arbitrary  $n$-faces $G$ and $G'$ of $L$,  there exists a sequence
$G=G_{0},G_{1},\ldots,G_{m}=G'$  of $n$-faces of the simplicial
complex $L$, such that any two neighbouring ones are $(n+1)$-up neighbours as well.
This gives us the same cardinal number of sets 
$\varphi^{-1}(G)$, $\varphi^{-1}(G')$, and $\varphi^{-1}(\bar{G})$.
\end{proof}
\begin{thm}
\label{Gustavson}
Let  $\varphi: K\rightarrow L$ be a strong covering, and let $\frac{w_{K}(\bar{F})}{w_{K}(F)}=\frac{w_{L}(\varphi(\bar{F}))}{w_{L}(\varphi(F))}$, for every pair
$\bar{F}, F$, such that $F\in \partial \bar{F}$. Then, $\varphi \mathcal{L}^{up}(L)=\mathcal{L}^{up}(K)\varphi$, i.e.,
$\s( \mathcal{L}^{up}(L))\subset \s(\mathcal{L}^{up}(K)\varphi)$.
\end{thm}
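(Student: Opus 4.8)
The plan is to establish the operator identity $\varphi\,\mathcal{L}^{up}(L)=\mathcal{L}^{up}(K)\,\varphi$ of maps $C^{n}(L,\rb)\to C^{n}(K,\rb)$ by verifying it on the elementary cochain basis $\{e_{[G]}\mid G\in S_{n}(L)\}$, and then to read off the spectral inclusion from the injectivity of the induced cochain map $\varphi$. Injectivity is immediate: distinct $n$-faces $G,G''$ of $L$ have disjoint fibres $\varphi^{-1}(G)$, $\varphi^{-1}(G'')$, so the cochains $\varphi e_{[G]}=\sum_{F\in\varphi^{-1}(G)}\sgn([F],[G])e_{[F]}$ have pairwise disjoint supports and are therefore linearly independent. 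Granting the intertwining identity, every eigenpair $(\lambda,g)$ of $\mathcal{L}^{up}(L)$ produces $\mathcal{L}^{up}(K)\varphi g=\varphi\,\mathcal{L}^{up}(L)g=\lambda\,\varphi g$ with $\varphi g\neq 0$, which gives $\s(\mathcal{L}^{up}(L))\subseteq\s(\mathcal{L}^{up}(K))$ as claimed.

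The bulk of the work is the basis identity. I would expand both sides using $\varphi e_{[G]}=\sum_{F\in\varphi^{-1}(G)}\sgn([F],[G])e_{[F]}$ together with the factorisation $\mathcal{L}^{up}=\delta^{*}\delta$ and the formal adjoint formula (\ref{formal adjoint}). On the $L$-side, $\varphi\,\mathcal{L}^{up}(L)e_{[G]}$ becomes a sum over cofaces $\bar{G}\supset G$ and co-neighbours $G'\in\partial\bar{G}$, each weighted by $\tfrac{w_{L}(\bar{G})}{w_{L}(G')}\sgn([G],\partial[\bar{G}])\sgn([G'],\partial[\bar{G}])$, followed by lifting $G'$ to $\sum_{F'\in\varphi^{-1}(G')}\sgn([F'],[G'])e_{[F']}$. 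On the $K$-side, $\mathcal{L}^{up}(K)\varphi e_{[G]}$ is a sum over lifts $F\in\varphi^{-1}(G)$, their cofaces $\bar{F}\supset F$, and faces $F'\in\partial\bar{F}$, weighted by $\sgn([F],[G])\tfrac{w_{K}(\bar{F})}{w_{K}(F')}\sgn([F],\partial[\bar{F}])\sgn([F'],\partial[\bar{F}])$. The goal is to compare the coefficient of each $e_{[F']}$.

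The matching of the two index sets is exactly what the strong covering hypothesis buys. Fix a target $F'$ with $\varphi(F')=G'$ and a coface $\bar{G}$ of both $G$ and $G'$. By Definition \ref{definition strong covering} (and Lemma \ref{lemma coverings}) there is a unique $\bar{F}\supset F'$ with $\varphi(\bar{F})=\bar{G}$, and since $\varphi$ restricts to a bijection on the vertices of $\bar{F}$, there is a unique $F\in\partial\bar{F}$ with $\varphi(F)=G$; this sets up a bijection between the $L$-side triples $(\bar{G},G',F')$ and the $K$-side triples $(F,\bar{F},F')$ contributing to a given $e_{[F']}$. The weights then agree term by term: the hypothesis $\tfrac{w_{K}(\bar{F})}{w_{K}(F)}=\tfrac{w_{L}(\varphi(\bar{F}))}{w_{L}(\varphi(F))}$, applied to the pair $\bar{F}\supset F'$, gives $\tfrac{w_{K}(\bar{F})}{w_{K}(F')}=\tfrac{w_{L}(\bar{G})}{w_{L}(G')}$. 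For the signs I would invoke the identity (\ref{equation signs}) in the two forms $\sgn([F],\partial[\bar{F}])=\sgn([G],\partial[\bar{G}])\sgn([F],[G])\sgn([\bar{F}],[\bar{G}])$ and $\sgn([F'],\partial[\bar{F}])=\sgn([G'],\partial[\bar{G}])\sgn([F'],[G'])\sgn([\bar{F}],[\bar{G}])$; substituting these into the $K$-side coefficient, the factors $\sgn([F],[G])^{2}$ and $\sgn([\bar{F}],[\bar{G}])^{2}$ collapse to $1$ and leave precisely $\sgn([G],\partial[\bar{G}])\sgn([G'],\partial[\bar{G}])\sgn([F'],[G'])$, the $L$-side coefficient. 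The diagonal ($G'=G$, $F'=F$) terms are handled in the same way, using the coface bijection of Lemma \ref{lemma coverings} to equate the up-degrees $\sum_{\bar{F}\supset F}\tfrac{w_{K}(\bar{F})}{w_{K}(F)}=\sum_{\bar{G}\supset G}\tfrac{w_{L}(\bar{G})}{w_{L}(G)}$.

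The main obstacle is entirely this combinatorial–sign bookkeeping: one must check that the correspondence $(\bar{G},G',F')\leftrightarrow(F,\bar{F},F')$ is genuinely a bijection with no over- or under-counting, which is where the \emph{strong} covering property (as opposed to the mere covering property, cf.\ Remark \ref{remark strong covering}) is indispensable, and that the accumulated signs cancel to leave the clean factor above. Once the coefficient of every $e_{[F']}$ is shown to coincide, the basis identity $\varphi\,\mathcal{L}^{up}(L)e_{[G]}=\mathcal{L}^{up}(K)\varphi e_{[G]}$ holds for all $G\in S_{n}(L)$, hence $\varphi\,\mathcal{L}^{up}(L)=\mathcal{L}^{up}(K)\varphi$, and the spectral inclusion follows from injectivity as above.
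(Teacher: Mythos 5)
Your proposal is correct and takes essentially the same approach as the paper's proof: you expand $\varphi\,\mathcal{L}^{up}(L)e_{[G]}$ and $\mathcal{L}^{up}(K)\varphi e_{[G]}$ exactly as in the paper's (\ref{left equiv}) and (\ref{right equiv}), use the strong covering property to match the two index sets (the paper's set equality (\ref{set equality}), which you sharpen into an explicit bijection of contributing triples), and cancel signs via (\ref{equation signs}). Your explicit invocation of the weight hypothesis to equate $\frac{w_{K}(\bar{F})}{w_{K}(F')}$ with $\frac{w_{L}(\bar{G})}{w_{L}(G')}$ and the injectivity argument yielding the spectral inclusion merely spell out steps the paper leaves implicit.
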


\begin{proof}
The proof follows directly from the equations below and the definition
of a strong covering.
\small
\begin{equation}
\label{left equiv}
\varphi \mathcal{L}^{up}(L)e_{[G]}=\sum_{\substack{\bar{G}\in S_{n+1}(L):\\ G',G\in \partial \bar{G}}}\sum_{\substack{F'\in S_{n}(K):\\ \varphi(F')=G'}}\sgn([F'],[G'])\sgn([G],\partial [\bar{G}])\sgn([G'],\partial [\bar{G}])\frac{w_{L}(\bar{G})}{w_{L}(G')}e_{[F']}.
\end{equation}
\begin{equation}
\label{right equiv}
\mathcal{L}^{up}(K)\varphi e_{[G]}=\sum_{\substack{F\in S_{n}(K):\\\varphi(F)=G}}\sum_{\substack{\bar{F}\in S_{n+1}(K):\\F,F'\in \partial \bar{F}}}\sgn([F],[G])\sgn([F],\partial [\bar{F}])\sgn([F'],\partial [\bar{F}])\frac{w_{K}(\bar{F})}{w_{K}(F')}e_{[F']}.
\end{equation}
\normalsize
First, we will prove 
\begin{equation}
\label{set equality}
\{F'\in S_{n}(K)\mid \exists \bar{G} \in S_{n+1}(L), \textrm{ s.t. }G, \varphi(F') \in \partial \bar{G} \}= \{F'\in S_{n}(K)\mid \exists \bar{F} \in S_{n+1}(K), \textrm{ s.t. } F, F' \in \partial \bar{F}, \textrm{ where } \varphi(F)=G \}.
\end{equation}
If a multiple of  $e_{[F']}$ occurs in the sum  in (\ref{right equiv}), then 
there exists $F\in S_{n}(K)$, such that $\varphi(F)=G$, and $F$ and $ F'$ are $(n+1)$-up neighbours.
Therefore, $\varphi(F')$ and $ G$ are $(n+1)$-up neighbours as well,
and $e_{[F']}$ is a summand in  (\ref{left equiv}).

On the other hand, if a multiple of  $e_{[F']}$ appears in  (\ref{left equiv}), then  $F'$ is an $n$-face of the simplicial complex $K$,  such that $\varphi(F')$ and $ G$ are $(n+1)$-up neighbours. Due to the properties of a strong covering, there must exist 
$F\in S_{n}(K)$, such that $\varphi(F)=G$, and $F$, $F'$ are
$(n+1)$-up neighbours; hence, $e_{[F']}$ is a summand in  (\ref{right equiv}).
Note that this claim will not hold if $\varphi$ is only a covering map
(see Example \ref{example coverings} and Figure \ref{cov05},  Figure
\ref{cov06} ). This proves (\ref{set equality}).

Due to   (\ref{equation signs})  we have 
\small
\begin{align*}
\sgn([F],[G])\sgn([F],\partial [\bar{F}])\sgn([F'],\partial [\bar{F}])=& \sgn([G],\partial [\bar{G}])\sgn([\bar{F}],[\bar{G}])\sgn([F'],\partial [\bar{F}])\\
=& \sgn([G],\partial [\bar{G}]) \sgn([G'],\partial [\bar{G}])\sgn([F'],[G']), 
\end{align*}
\normalsize
which makes (\ref{left equiv}) and (\ref{right equiv}) equal.
\end{proof}
As a consequence of  Theorem \ref{Gustavson} and Lemma \ref{lemma constant coverings} we obtain the following corollary.
\begin{coll}
\label{Corollary strong coverings}
If $\varphi: K\rightarrow L$ is a strong covering, then 
\begin{itemize}
\item[(i)] $\s(L^{up}_{n}(L))\subset\s(L^{up}_{n}(K))$
and
\item [(ii)] $\s(\Delta^{up}_{n}(L))\subset\s(\Delta^{up}_{n}(K))$.
\end{itemize} 
\end{coll}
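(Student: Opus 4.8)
The plan is to deduce both spectral inclusions from the intertwining identity $\varphi\,\mathcal{L}^{up}_n(L)=\mathcal{L}^{up}_n(K)\,\varphi$ of Theorem \ref{Gustavson}, by pushing eigenfunctions of the Laplacian on $L$ forward to eigenfunctions on $K$ through the injective cochain map $\varphi:C^n(L,\rb)\to C^n(K,\rb)$. The only thing that must be checked before invoking Theorem \ref{Gustavson} is that the two relevant weight functions satisfy its hypothesis $\frac{w_K(\bar F)}{w_K(F)}=\frac{w_L(\varphi(\bar F))}{w_L(\varphi(F))}$ for every incident pair $F\in\partial\bar F$.

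For part (i) this is immediate: both operators are combinatorial, so $w_K\equiv 1$ and $w_L\equiv 1$, and both ratios equal $1$. For part (ii) I would take the weight on $K$ to be the pullback $w_K:=w_L\circ\varphi$ of a normalizing weight $w_L$ on $L$; then the hypothesis of Theorem \ref{Gustavson} holds by construction. It remains to confirm that $w_K$ again satisfies the normalizing condition (\ref{normalizing condition}), so that it genuinely determines the normalized operator $\Delta^{up}_n(K)$: for a non-facet $F$ with $\varphi(F)=G$, Lemma \ref{lemma coverings} (which underlies the constant covering degree of Lemma \ref{lemma constant coverings}) supplies a bijection $\bar F\mapsto\varphi(\bar F)$ between the $(n+1)$-faces containing $F$ and those containing $G$, whence $\sum_{\bar F\ni F}w_K(\bar F)=\sum_{\bar G\ni G}w_L(\bar G)=w_L(G)=w_K(F)$. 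Thus $\Delta^{up}_n(K)$ is the normalized Laplacian for $w_K$ and Theorem \ref{Gustavson} applies here as well.

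With the intertwining identity available for both operators, the core argument is uniform. The map $\varphi$ is injective: since the covering is surjective on simplices, every $G\in S_n(L)$ has a preimage $F$, and $(\varphi g)([F])=\sgn([F],[G])\,g([G])$, so $g\neq 0$ forces $\varphi g\neq 0$. Hence, if $g$ is an eigenfunction of $\mathcal{L}^{up}_n(L)$ with eigenvalue $\mu$, then $\mathcal{L}^{up}_n(K)(\varphi g)=\varphi(\mathcal{L}^{up}_n(L)g)=\mu\,\varphi g$ with $\varphi g\neq 0$, so $\mu\in\s(\mathcal{L}^{up}_n(K))$. Because $\varphi$ is injective and linear, it carries an orthogonal eigenbasis of a $\mu$-eigenspace on $L$ to linearly independent $\mu$-eigenvectors on $K$, so multiplicities can only increase; this upgrades the mere containment of eigenvalues to the multiset inclusion $\s(\mathcal{L}^{up}_n(L))\subset\s(\mathcal{L}^{up}_n(K))$. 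Specializing to the two weight choices above then yields (i) and (ii).

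The step I expect to be the main obstacle is the verification in part (ii) that the pulled-back weight still obeys the normalizing condition, which is precisely where the covering structure — the local bijection of cofaces from Lemma \ref{lemma coverings} and the resulting constant degree of Lemma \ref{lemma constant coverings} — is essential. The transfer of eigenfunctions is then formal, the only subtlety being to track multiplicities rather than merely the underlying set of eigenvalues.
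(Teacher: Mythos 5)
Your proof is correct and takes essentially the same route as the paper: both parts are reduced to the intertwining identity of Theorem \ref{Gustavson} by exhibiting weights satisfying its ratio hypothesis, with Lemma \ref{lemma coverings} supplying the matching of degrees needed for the normalized case. The only differences are cosmetic: for (i) you take $w_L\equiv 1$ directly, whereas the paper feeds in the rule-(\ref{rule (ii)}) weight and invokes Lemma \ref{lemma constant coverings} to see it is the constant $m$ (hence yields the same combinatorial operator), and you additionally spell out the injectivity/multiplicity argument that the paper subsumes into the statement of Theorem \ref{Gustavson}.
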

\begin{proof}
First  we consider  the case $(i)$, when $w_{K}\equiv 1$.
Since $\varphi$ is a strong covering, then according to Theorem \ref{Gustavson} we have
$\s( \mathcal{L}^{up}(L))\subset \s(\mathcal{L}^{up}(K))$, where 
$w_{L}(G)=\sum_{F:\varphi(F)=G}w_{K}(F)$.
Due to Lemma \ref{lemma constant coverings} there exists a constant  $m=\lvert\{ F\in S_{n}(K)\mid F\in \varphi^{-1}(G)\} \lvert$, such that  $w_{L}(G)=m$ for every $G\in L$.
From the definition of the generalized Laplace operator, it follows that
$\mathcal{L}^{up}_{n}(L,w_{L})=L^{up}_{n}(L)$.
Case $(ii)$ is a direct consequence of   Theorem \ref{Gustavson}, Lemma \ref{lemma coverings}, and previous considerations.
\end{proof}
\begin{rem}
A similar theorem for coverings and the spectrum of the combinatorial Laplacian was proposed in \cite{Gustavson}.
In particular Gustavson claims that for a given covering map $\varphi: K\rightarrow L$ among simplicial complexes $K$ and $L$, the spectrum of the combinatorial Laplacian $L^{up}_{n}(L)$ is a subset of $L^{up}_{n}(L)$.
However, there are counterexamples to this claim.
For instance, the simplicial complex $\tilde{L}$ given in Figure
\ref{cov05} is a covering (according to Definition \ref{definition
  covering}) of the simplicial complex $L$ in Figure \ref{cov06}. The eigenvalues of 
$L^{up}_{0}(\tilde{L})$ are $0, 0.38, 1.38, 2.61, 3.61$, whereas the eigenvalues of $L^{up}_{0}(L)$ are $0, 1,3,4$.
The same definition \ref{definition covering} of a covering map  was used by Rotman in \cite{Rotman1}, who claimed  the  lifting lemma (Theorem 2.1.),  
which says  that every path in $L$ with  a base point $v$ can be uniquely lifted to a path in $K$ with a base point $\tilde{v}$, for every 
$\tilde{v}\in \varphi^{-1}(v)$.
However, the same pair of covering complexes can be used as a counterexample to this claim as well.
These counterexamples are eliminated if we consider strong coverings as Definition
\ref{definition strong covering}  instead of coverings.
\end{rem}

\begin{rem}
A combinatorial $k$-wedge $K_{1}\vee_{k}K_{2}$ has as a cover (not strong) $K_{1}\cup K_{2}$, thus Theorem \ref{Interlacing covering} holds, whereas Theorem \ref{Gustavson} does not.
\end{rem}

If a simplicial map $\varphi$ fails to be  a covering, then   $\varphi$ need not preserve the dimensionality, i.e., there may exist $n$-simplices in $K$, whose image under map $\varphi$ will be  $m$-dimensional, $m< n$.
Without  loss of generality assume 
$[\bar{F}]=[v_{0},\ldots,v_{n+1}]$, $[F_{i}]=[v_{0},\ldots,\hat{v_{i}},\ldots v_{n+1}]$,  $[F_{j}]=[v_{0},\ldots,\hat{v_{j}},\ldots v_{n+1}]$,  and 
 $\varphi(\bar{F})=\varphi(F_{i})=\varphi(F_{j})=G$ , i.e. $\varphi(v_{i})=\varphi(v_{j})$.
Then,
\small
\begin{align}
&\sum_{F\in \{F_{i},F_{j}\}}\sum_{\substack{\bar{F}\in S_{n+1}(K):\\ F,F'\in  \partial \bar{F}}} \sgn([F],[G])\sgn([F],\partial [\bar{F}])[F',G']\sgn([F]',\partial [\bar{F}])\frac{w_{K}(\bar{F})}{w_{L}(G')}e_{[G']}\\
&=\frac{w_{K}(\bar{F})}{w_{L}(G)}\sgn([F_{i}],[G])(-1)^{i}\sgn([F_{j}],[G])(-1)^{j}e_{[G]}+  \frac{w_{K}(\bar{F})}{w_{L}(G)}\sgn([F_{i}],[G])(-1)^{i}\sgn([F_{i}],[G])(-1)^{i}e_{[G]}\\
&+ \frac{w_{K}(\bar{F})}{w_{L}(G)}\sgn([F_{i}],[G])(-1)^{i}\sgn([F_{j}],[G])(-1)^{j}e_{[G]}+ \frac{w_{K}(\bar{F})}{w_{L}(G)}\sgn([F_{j}],[G])(-1)^{j}\sgn([F_{j}],[G])(-1)^{j}e_{[G]}\\
&=\frac{w_{K}(\bar{F})}{w_{L}(G)}(\sgn([F_{i}],[G])(-1)^{i}(-1)^{j}\sgn([F_{i}],[G])(-1)^{j-i+1}e_{[G]}+ e_{[G]})\\
&+ \frac{w_{K}(\bar{F})}{w_{L}(G)}(\sgn([F_{j}],[G])(-1)^{j}(-1)^{i}\sgn([F_{j}],[G])(-1)^{j-i+1}e_{[G]}+ e_{[G]})\\
&=0\label{equality star}
\end{align}
\normalsize
where $G'=\varphi(F')$.
Therefore, $\mathcal{L}^{up}(L)e_{[G]}=\varphi^{*} \mathcal{L}^{up}(K)\varphi e_{[G]}$ and the diagram (\ref{diagram simplicial map}) commutes for both choices of the weight function $w_{L}$, i.e., (\ref{rule (i)}) and (\ref{rule (ii)}).
Furthermore, the following holds.
\begin{thm}
\label{theorem intercing simplicial map rule(ii)}
Let $\varphi: K\rightarrow L$ be a simplicial map, and $w_{K}$ and
$w_{L}$ be weight functions of the simplicial complexes $K$ and $L$
respectively, such that (\ref{rule (ii)}) is satisfied. Let
$\lambda_{1},\ldots,\lambda_{N_{K}}$ and
$\theta_{1},\ldots,\theta_{N_{L}}$ be the eigenvalues of 
$\mathcal{L}^{up}_{n}(K,w_{K})$ and $\mathcal{L}^{up}_{n}(L,w_{L})$, respectively, ordered increasingly.
Then,
\begin{equation}
\lambda_{k}\leq \theta_{k}\leq \lambda_{k+N_{K}-N_{L}},
\end{equation} 
whith $\lambda_{N_{K}+1}=\ldots=\lambda_{2N_{K}-N_{L}}=N$, where $N$ is the number of vertices of  $K$.
\end{thm}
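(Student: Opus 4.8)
The plan is to reproduce, almost verbatim, the argument already carried out for the covering map in Theorem \ref{Interlacing covering}, since the two structural facts that drive that proof survive the collapsing of simplices. The first fact, commutativity of the diagram (\ref{diagram simplicial map}), namely $\varphi^{*}\mathcal{L}^{up}(K)\varphi=\mathcal{L}^{up}(L)$ under the weight rule (\ref{rule (ii)}), has already been verified in the dimension-dropping case; the cancellation (\ref{equality star}) is exactly what compensates for an $(n{+}1)$-simplex two of whose vertices are identified. The second fact is the isometry $(\varphi g,\varphi g)=(g,g)$ for every $g\in C^{n}(L,\rb)$. To establish it under (\ref{rule (ii)}) I would expand $(\varphi g,\varphi g)$ in the $w_{K}$-weighted inner product on $C^{n}(K,\rb)$: every $n$-face $F$ on which $\varphi$ identifies two vertices contributes $\varphi g([F])=0$, so only the faces with $\varphi(F)=G\in S_{n}(L)$ survive, each contributing $w_{K}(F)g([G])^{2}$; grouping by fibres and invoking (\ref{rule (ii)}) gives $\sum_{G}g([G])^{2}\sum_{F:\varphi(F)=G}w_{K}(F)=\sum_{G}w_{L}(G)g([G])^{2}=(g,g)$. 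In particular $\varphi$ is injective, so its image is an $N_{L}$-dimensional subspace; let $\mathcal{W}$ be its orthogonal complement, of dimension $N_{K}-N_{L}$.

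With these facts in hand the interlacing follows exactly as in the chain of estimates (\ref{ineq M01})--(\ref{specCase}). Setting $f=\varphi g$ and using the isometry together with the commutativity, the Min-max theorem (Theorem \ref{min-max theorem}) rewrites $\theta_{k}$ as a max--min of $\mathcal{R}_{\mathcal{L}^{up}(K)}(f)$ over $f$ in the image of $\varphi$ with $f\perp\varphi(\mathcal{V}_{k-1})$, i.e. over $f$ subject to $f\perp\mathcal{V}_{k-1}$ and $f\perp\mathcal{W}$. Dropping the constraint $f\perp\mathcal{W}$ enlarges the minimizing set and yields the lower bound $\theta_{k}\geq\lambda_{k}$. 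Conversely, absorbing $\mathcal{W}$ into the constraint subspace replaces $\mathcal{V}_{k-1}$ by a space of dimension at most $k-1+N_{K}-N_{L}$, whence $\theta_{k}\leq\lambda_{k+N_{K}-N_{L}}$; the boundary convention $\lambda_{N_{K}+1}=\cdots=N$ is inherited unchanged from Theorem \ref{Interlacing covering}.

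The one genuine departure from the covering situation, and the step I expect to need the most care, is the bookkeeping for $\mathcal{W}$. Here $\mathcal{W}$ contains not only a fibre-difference vector for each $G\in S_{n}(L)$ (as in the covering case) but also every basis cochain $e_{[F]}$ indexed by a collapsing $n$-face $F$, since such $e_{[F]}$ is orthogonal to the whole image of $\varphi$. I would verify by a direct count that, writing $c$ for the number of collapsing $n$-faces and $m_{G}$ for the size of the fibre over $G$, the collapsing faces and the fibre-differences contribute $c+\sum_{G}(m_{G}-1)=c+(N_{K}-c)-N_{L}=N_{K}-N_{L}$ dimensions, so $\dim\mathcal{W}=N_{K}-N_{L}$ is preserved. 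Once the isometry and this dimension count are secured the min-max manipulation is routine; I would only also record that rule (\ref{rule (ii)}) forces $w_{L}(G)>0$ on every $G$ in the image (so the inner product on $C^{n}(L,\rb)$ is non-degenerate there), degenerate faces if any being handled by the convention $\mathcal{R}_{A}(g)=0$ of Remark \ref{remark convention}.
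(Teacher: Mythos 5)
Your proposal is correct and takes essentially the same route as the paper, whose proof of Theorem \ref{theorem intercing simplicial map rule(ii)} is exactly the reduction you describe: the cancellation (\ref{equality star}) giving commutativity of (\ref{diagram simplicial map}) under rule (\ref{rule (ii)}), the isometry $(\varphi g,\varphi g)=(g,g)$, and the min--max chain of Theorem \ref{Interlacing covering}. Your explicit dimension count for $\mathcal{W}$ (collapsing faces plus fibre differences summing to $N_{K}-N_{L}$) and the remark on possibly degenerate weights merely fill in bookkeeping that the paper leaves implicit.
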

\begin{proof}
Follows directly from (\ref{equality star}), the  fact that $(g,g)=(\varphi g,\varphi g)$, and Theorem \ref{Interlacing covering}.
\end{proof}
 On the other hand, for the weight function $w_{L}$ as defined in
 (\ref{rule (i)}), we need certain modifications  for the interlacing theorem to hold.
 
 First, we motivate the choice of weight function $w_{L}$  in (\ref{rule (i)}).
 Let  $(K,w_{K})$ be a simplicial complex whose weights are normalized, i.e. 
$w_{K}(F)=\sum_{\bar{F}: F\in \bar{F}}w_{K}(\bar{F})$, and let 
 $\varphi: K\rightarrow L$ be a simplicial map. 
Assume   $\bar{F}_{1}, \ldots, \bar{F}_{k}$ are $(n+1)$-faces of $K$
whose $\varphi$-images are $n$-dimensional, i.e.
$\varphi(\bar{F}_{i})=G_{i}\in S_{n}(L)$, and   assume  (\ref{rule
  (ii)})  holds. Note that  we are now dealing with simplicial
complexes with loops, i.e., $L$ is such a complex and 
$\varphi(\bar{F}_{i})$ are loops; thus, 
$\deg G=\sum_{\substack{\bar{G}\in S_{n+1}(L):\\ G\in \partial \bar{G} }} w_{L}(\bar{G})+\sum_{\substack{\bar{F_{i}\in S_{n+1}(K)}: \\G\in \partial \varphi(\bar{F}_{i})}}w_{K}(\bar{F}_{i})$, and 
\begin{align*}
w_{L}(G_{i})=&\sum_{\substack{F\in S_{n}(K):\\ \varphi(F)=G}}w_{K}(F)\\
=&\sum_{\substack{ \bar{F} \in S_{n+1}(K), \bar{F}\neq \bar{F_{i}}:\\ F\in \partial\bar{F}}}w_{K}(\bar{F})+ 2w_{K}(\bar{F}_{i})\\
=&\sum_{\substack{ \bar{F} \in S_{n+1}(K):\\ F\in \partial\bar{F}}}w_{K}(\bar{F})+ w_{K}(\bar{F}_{i})\\
=& \deg G_{i} + w_{K}(\bar{F}_{i}).
\end{align*}
Therefore the weight function $w_{L}$  is not normalized.
However, if we adopt definition (\ref{rule (i)}) for $w_{L}$, we get the desired equality
$w_{L}(G)=\deg G$; hence, if $\mathcal{L}^{up}_{n}(K,w_{K})$ is  the normalized Laplacian, then 
$\mathcal{L}^{up}_{n}(L,w_{L})$ is  the normalized Laplacian as well.

  Let 
$\mathcal{W}$ be, as before,  a vector  space spanned by $\{\sgn([F_{ji}],[G_{i}])e_{[F_{ji}]}-\sgn([F_{(i+1)j}],[G_{i}])e_{[F_{(i+1)j}]}\mid \bigcup_{j} F_{ji}=\varphi^{-1}(G_{i}), \textrm{ and  }   \bigcup_{i} G_{i}=S_{n}( L)\}$.
Let $\phi: C^{n}(L)\rightarrow C^{n}(L)$ be an operator such that 
$\phi e_{[G]}=\sum_{\substack{\bar{F}\in S_{n+1}: \varphi(\bar{F})=G\\ }} \frac{w_{K}(\bar{F})}{w_{L}(G)}e_{[G]}$,
 and let $\mathcal{Z}$ be a vector space of dimension $z$ spanned by vectors $\{e_{[G]}\mid G\in S_{n}(L), \exists \bar{F}\in S_{n+1}(K) \textrm{ s.t. } \varphi(\bar{F})=G\}$.
Assume that $\varphi: K\rightarrow L$ is a simplicial map, and $w_{K}$ and $w_{L}$ are weight functions satisfying \ref{rule (i)}.
Then, we have
\begin{align}
\theta_{k}=& \max_{\mathcal{V}_{k-1}}\min_{g\perp \mathcal{V}_{k-1}}\frac{(\mathcal{L}^{up}(L)g,g)}{(g,g)}\\
=& \max_{\mathcal{V}_{k-1}}\min_{g\perp \mathcal{V}_{k-1}}\frac{(\varphi^{*} \mathcal{L}^{up}(K)\varphi g ,g)}{(g,g)}\\
=& \max_{\mathcal{V}_{k-1}}\min_{g\perp \mathcal{V}_{k-1}}\frac{( \mathcal{L}^{up}(K)\varphi g ,\varphi g)}{(\varphi g,\varphi g) - (\phi g, g)}\label{ineq M04}\\
\geq & \max_{\mathcal{V}_{k-1}}\min_{g\perp \mathcal{V}_{k-1}}\frac{( \mathcal{L}^{up}(K)\varphi g ,\varphi g)}{(\varphi g,\varphi g) }\\
= & \max_{\mathcal{V}_{k-1}}\min_{f\perp \mathcal{V}_{k-1}, f\perp \mathcal{W}}\frac{( \mathcal{L}^{up}(K)f , f)}{(f,f) }\\
\geq & \max_{\mathcal{V}_{k-1}}\min_{g\perp \mathcal{V}_{k-1}} \frac{( \mathcal{L}^{up}(K)f , f)}{(f,f) }\\
\geq & \lambda_{k}.
\end{align}


On the other hand, the upper interlacing inequality follows from  (\ref{ineq M04}). 
\begin{align}
\theta_{k}  \leq & \max_{\mathcal{V}_{k-1}}\min_{g\perp \mathcal{V}_{k-1}}\frac{( \mathcal{L}^{up}(K)\varphi g ,\varphi g)}{(\varphi g,\varphi g) - (\phi g, g)}\\
\leq & \max_{\mathcal{V}_{k-1}}\min_{g\perp \mathcal{V}_{k-1}, g\perp \mathcal{Z}}\frac{( \mathcal{L}^{up}(K)\varphi g ,\varphi g)}{(\varphi g,\varphi g)}\\
\leq & \max_{\mathcal{V}_{k+z-1}}\min_{g\perp \mathcal{V}_{k+z-1}, }\frac{( \mathcal{L}^{up}(K)\varphi g ,\varphi g)}{(\varphi g,\varphi g)}\\
= & \max_{\mathcal{V}_{k+z-1}}\min_{f\perp \mathcal{V}_{k+z-1}, f\perp \mathcal{W} }\frac{( \mathcal{L}^{up}(K)f ,f)}{(f,f)}\\
\leq & \max_{\mathcal{V}_{k+z+N_{K}-N_{L}-1}}\min_{f\perp \mathcal{V}_{k+z+N_{K}-N_{L}-1} }\frac{( \mathcal{L}^{up}(K)f ,f)}{(f,f)}\\
\leq & \lambda_{k+N_{K}-N_{L}+z}
\end{align}
The inequalities here are derived analogously to the ones in Theorem \ref{Interlacing covering}.
We assemble our results into the following theorem.
\begin{thm}
\label{Theorem interlacing 2nd choice of weights}
Let $(K,w_{K})$ and $(L,w_{L})$ be weighted simplicial complexes and $\varphi:K\rightarrow L$ a simplicial map, such that 
(\ref{rule (i)}) holds.
Let $\mathcal{Z}$ be the  vector space of dimension $z$ with  basis  $\{e_{[G]}\mid \exists \bar{F}\in S_{n+1}(K) \textrm{ s.t. } \varphi(\bar{F})=G\}$,  and let
$\mathcal{W}$ be the vector space  spanned by $\{\sgn([F_{ji}],[G_{i}])e_{[F_{ji}]}-\sgn([F_{(i+1)j}],[G_{i}])e_{[F_{(i+1)j}]}\mid \bigcup_{j} F_{ji}=\varphi^{-1}(G_{i}), \textrm{ and  }   \bigcup_{i} G_{i}=S_{n}( L)\}$.  Assume 
$\lambda_{1}\leq \lambda_{2}\leq \ldots\leq \lambda_{N_{K}}$ and
$\theta_{1}\leq \theta_{2}\leq \ldots\leq \theta_{N_{K}}$ are the eigenvalues of $\mathcal{L}^{up}_{n}(K)$ and  $\mathcal{L}^{up}_{n}(L)$, respectively.  Then, 
\begin{equation}
\lambda_{k} \leq \theta_{k}\leq   \lambda_{k+z+N_{K}-N_{L}}
\end{equation}
\end{thm}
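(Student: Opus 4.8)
The plan is to reduce the whole statement to the covering-type argument already carried out for Theorem \ref{Interlacing covering} and Theorem \ref{theorem intercing simplicial map rule(ii)}, correcting for the fact that under the weight rule (\ref{rule (i)}) the pullback $\varphi$ no longer preserves the norm. The two facts I would quote at the outset are: first, the commutativity of diagram (\ref{diagram simplicial map}), which holds for \emph{both} weight rules by the computation culminating in (\ref{equality star}), so that for every $g\in C^{n}(L,\rb)$ one has $(\mathcal{L}^{up}(L)g,g)=(\varphi^{*}\mathcal{L}^{up}(K)\varphi g,g)=(\mathcal{L}^{up}(K)\varphi g,\varphi g)$; and second, the one genuinely new ingredient, the denominator identity $(g,g)=(\varphi g,\varphi g)-(\phi g,g)$ recorded in (\ref{ineq M04}), which follows by expanding the weighted inner products and subtracting off the $(n+1)$-face contributions that distinguish (\ref{rule (i)}) from (\ref{rule (ii)}). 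Here $\phi$ is the diagonal, positive semidefinite operator supported on $\mathcal{Z}$.

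For the \emph{lower} bound I would use the Courant--Fischer form of Theorem \ref{min-max theorem}. Since $\phi$ is positive semidefinite we have $(\phi g,g)\geq 0$, so enlarging the denominator from $(\varphi g,\varphi g)-(\phi g,g)$ to $(\varphi g,\varphi g)$ only decreases the Rayleigh quotient; this reduces the expression for $\theta_{k}$ to exactly the covering situation. Setting $f=\varphi g$ and using that, as $g$ ranges over $C^{n}(L,\rb)$, the image $f$ sweeps out $\mathcal{W}^{\perp}$ precisely as in the proof of Theorem \ref{Interlacing covering}, the min-max principle then yields $\theta_{k}\geq\lambda_{k}$.

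For the \emph{upper} bound I would again start from the exact expression for $\theta_{k}$ in (\ref{ineq M04}) and exploit that $\phi$ is supported on $\mathcal{Z}$: restricting the inner minimization to vectors $g\perp\mathcal{Z}$ forces $(\phi g,g)=0$, returning the denominator to $(\varphi g,\varphi g)$. Since minimizing over a smaller set can only raise the minimum, and taking the maximum over the test subspaces $\mathcal{V}_{k-1}$ preserves this, we obtain an upper estimate for $\theta_{k}$. The constraint $g\perp\mathcal{Z}$ costs $z$ dimensions, and the passage $f=\varphi g\in\mathcal{W}^{\perp}$ costs a further $\dim\mathcal{W}=N_{K}-N_{L}$ dimensions; absorbing both into an enlargement of the excluded subspace and reading off the resulting min-max value gives $\theta_{k}\leq\lambda_{k+z+N_{K}-N_{L}}$, exactly the chain of inequalities preceding the statement.

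The step I expect to be most delicate is keeping the directions of all inequalities consistent in the upper bound: one must work with the variational characterization in its ``$\max$ over $(j-1)$-subspaces, $\min$ over the orthogonal complement'' form (\ref{min-max equation1}), and check that enlarging the excluded subspace by the $z$ dimensions of $\mathcal{Z}$ and the $N_{K}-N_{L}$ dimensions of $\mathcal{W}$ shifts the eigenvalue index up by precisely $z+N_{K}-N_{L}$. Verifying the denominator identity $(g,g)=(\varphi g,\varphi g)-(\phi g,g)$ and the positive semidefiniteness of $\phi$ is routine but is the linchpin on which both bounds rest, and I would also invoke the convention $\mathcal{R}_{A}(g)=0$ when $(g,g)=0$ from Remark \ref{remark convention} to handle any degenerate $w_{L}(G)$ without incident.
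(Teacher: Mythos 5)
Your proposal matches the paper's own argument essentially step for step: the same commutativity fact (\ref{equality star}), the same denominator identity $(g,g)=(\varphi g,\varphi g)-(\phi g,g)$ from (\ref{ineq M04}) with the diagonal operator $\phi$ supported on $\mathcal{Z}$, the same relaxation $(\phi g,g)\geq 0$ for the lower bound, and the same restriction $g\perp\mathcal{Z}$ followed by absorbing the $z$ and $N_{K}-N_{L}$ dimensional constraints into the excluded subspace for the upper bound. It is correct and takes essentially the same route as the paper.
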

\begin{rem}
In \cite{Butler} Section 3,  Butler obtains interlacing effects of  \emph{ weak coverings}.
Given weighted graphs $(G,w_{G})$  with normalized weights and $(H,w_{H})$, then  
$\pi: G\rightarrow H$ is a weak covering map, if it   maps vertices to
vertices. The weight functions of  $H$ and $G$ satisfy the following.
Let $(x,y)$ be an edge in $H$, then $w_{H}(x,y)=\sum_{\substack{u\in \pi^{-1}(x),\\ v\in \pi^{-1}(y)}}w_{G}(u,v)$, and let $x$ be a vertex in $H$, then $\deg_{H} x=\sum_{v\in \pi^{-1}(x)} \deg_{G} v$.
In our terminology, a "weak cover" is nothing but a simplicial map
defined on graphs such that the weights obey 
(\ref{rule (ii)}).
However, if $L$ happens to have loops, stemming from an edge collapsing into a vertex, the Laplace operator $\mathcal{L}^{up}_{0}(H,w_{H})$ will not be  normalized. For instance, let $G$ be a hollow traingle, with edges 
$(1,2), (2,3)$ and $(1,3)$ with weights $1$ on the edges, and $2$ on vertices; and let $H$ be a graph on one edge $(1',2')$ and a loop $(2',2')$.
Then  the map $\pi: G\rightarrow H$  with 
$\pi(1)=1'$ and $\pi(2)=\pi(3)=2'$ is a weak covering map.
Thus,  $w_{H}(1',2')=2$, $\deg _{H}(1')=2$, and  $\deg _{H}(2')=4\neq
\sum_{(u,v):\pi(u)=2' }w_{G}(u,v)=3$, which is clearly in
contradiction with the weights required for the normalized graph Laplacian.
However, the interlacing theorem Butler obtains is accurate, for weights defined as
$w_{H}(x)=\sum_{u: \pi(u)=x} w_{K}(v)$, but the resulting Laplacian $\mathcal{L}^{up}_{0}(H,w_{H})$ will not be normalized!
\end{rem}
\section{Collapsing, Contracting and Interlacing}
\label{section 4}
\label{Collapsing, Contracting}
In this section we analyze the effect of collapsing and contraction on the eigenvalues of the Laplace operator.
\begin{defn}
Let $K$ be a simplicial complex and $(\bar{F}, F)$ pair of faces, such that 
$F\in \partial\bar{F}$, and $F$  is not a facet of any other face in $K$.
The face $\bar{F}$ is  called a \emph{free face}, and a simplicial complex $K'$ obtained from $K$ by deleting 
$\bar{F}$ and $F$ is called an \emph{elementary collapse} of $K$. A sequence of elementary collapses is called a \emph{collapse}.
A simplicial map $\varphi:K\rightarrow K'$, corresponding to this
operation, is also called an elementary collapse.
\end{defn}
\begin{defn}
\label{definition contraction}
Let $K$ be a simplicial complex of dimension $n+1$, and let 
$\bar{F}$ be an $(n+1)$-face, such that  only two of its facets $F,F'$, are incident to $(n+1)$-simplices other than $\bar{F}$.
The simplicial complex $K'$ obtained from $K$ by deleting the face $\bar{F}$ and identifying $F$ and $F'$ is called an \emph{elementary contraction} 
of  $K$.  A sequence of elementary contractions is called a \emph{contraction}.
A simplicial map $\varphi:K\rightarrow K'$, corresponding to this
operation, is also called an elementary contraction.
\end{defn}
\begin{rem}
Assume $F$ and $F'$ are $n$-faces of the $(n+1)$-face $\bar{F}$, which
are identified under the elementary contraction 
$\varphi:K\rightarrow K'$. If  $F'$ has no  $(n+1)$-up neighbours, other than facets of $\bar{F}$, then this elementary contraction can be treated as a composition of two elementary  collapses.  Thus, we will not analyse this case separately. 
\end{rem}
Let $\varphi:K\rightarrow K'$ be an elementary contraction, which identifies two $n$-faces $F_{1}$ and $F_{1}'$, which are $(n+1)$-up neighbours. In other words, there exist  vertices $v\in F_{1}$ and $v'\in F_{1}'$, such that 
$\varphi(v)=\varphi(v')$, and $\varphi $ is injective on all other vertices of $K$.
In what follows we will  distinguish among two types of elementary
contractions: 
\begin{itemize}
\item[$(i)$]  there exist $2m$  $(n+1)$-faces 
 $\bar{F}_{2}$, $\bar{F}'_{2}, \ldots, \bar{F}_{m+1}, \bar{F}'_{m+1}$ of  $K$, such that $\varphi(\bar{F}_{i})=\varphi(\bar{F}'_{i})$,
 for all $2\leq i \leq m+1$, and $\varphi$ is injective on the remaining $(n+1)$-faces of $K$, \\
 or
 \item[$(ii)$] $\varphi$ is injective on $S_{n+1}(K)$.
\end{itemize}

\begin{exmp}
In Figures \ref{collapse02} and  \ref{collapse03}, elementary
collapses of the simplicial complex 
$K$ (Figure \ref{SC}) are presented, in particular the collapse of  an
edge and a triangle, respectively.
Figures \ref{contraction01} and  \ref{contraction02} represent two main types of elementary contractions, type $(i)$ and $(ii)$, respectively.
\end{exmp}
\begin{figure}[h!tp]
  \begin{center}
\includegraphics[scale=0.45]{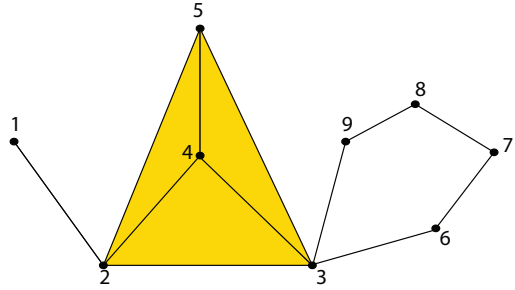}
  \end{center}
  \caption{Simplicial complex $K$}
  \label{SC}
\end{figure} 

 \begin{figure}[h!tp]
  \begin{center}
 \subfigure[raggedright,scriptsize][ ]{\label{collapse02}\includegraphics[scale=0.40]{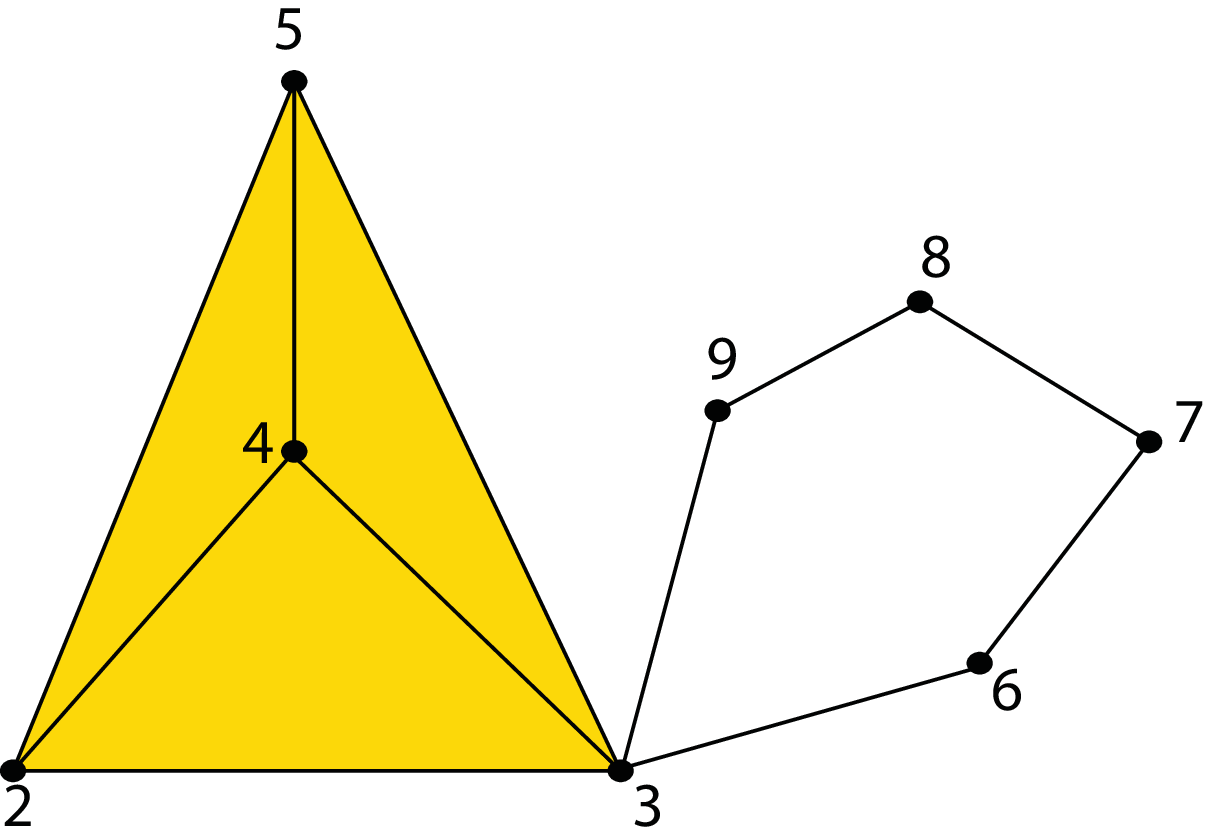}} \qquad\qquad
    \subfigure[raggedright,scriptsize][  ]{\label{collapse03}\includegraphics[scale=0.40]{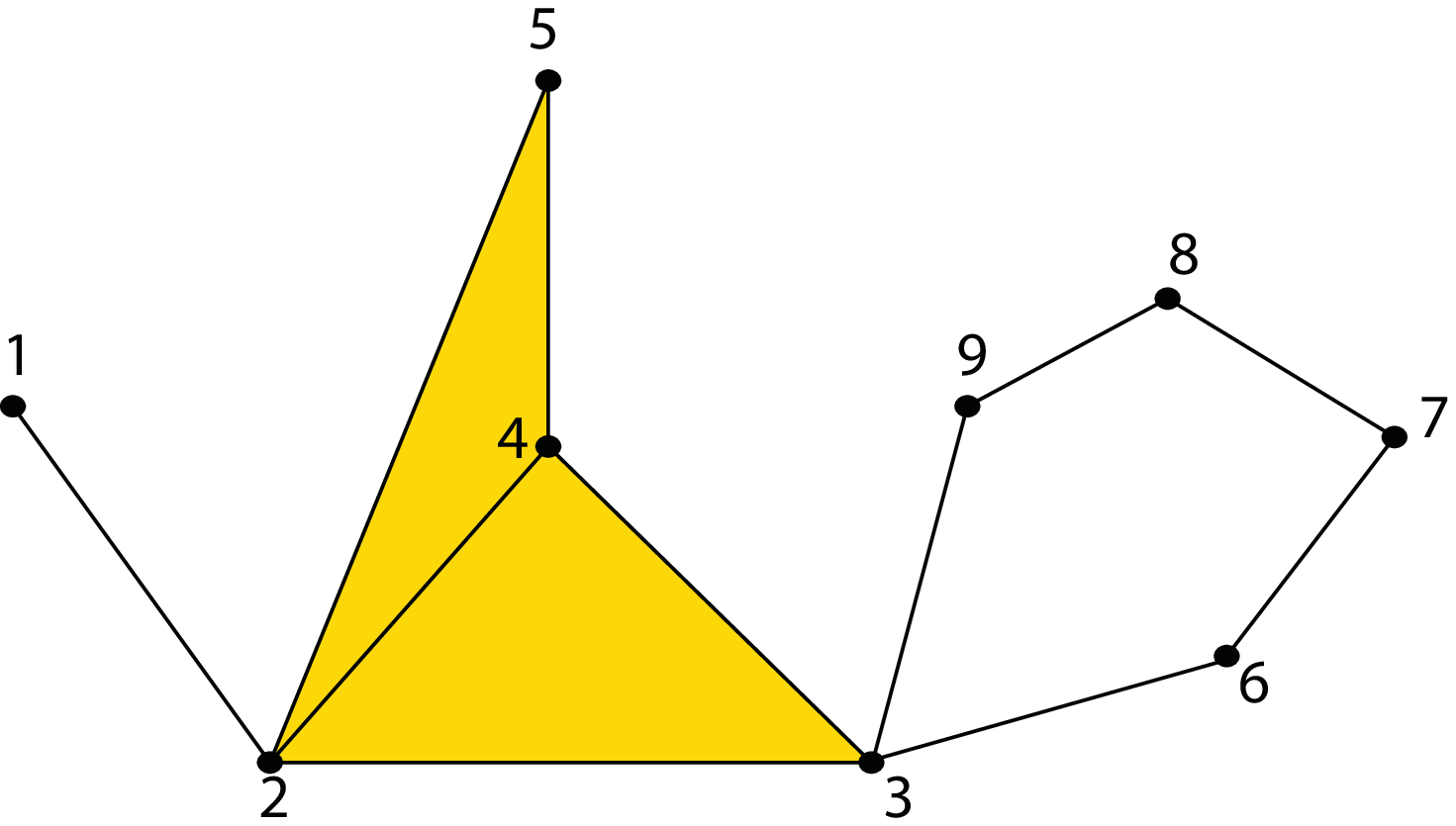}}
  \end{center}
  \caption{Elementary collapses of simplicial complex $K$ }
  \label{figure collapses}
\end{figure} 

 \begin{figure}[h!tp]
  \begin{center}
 \subfigure[raggedright,scriptsize][ ]{\label{contraction01}\includegraphics[scale=0.4]{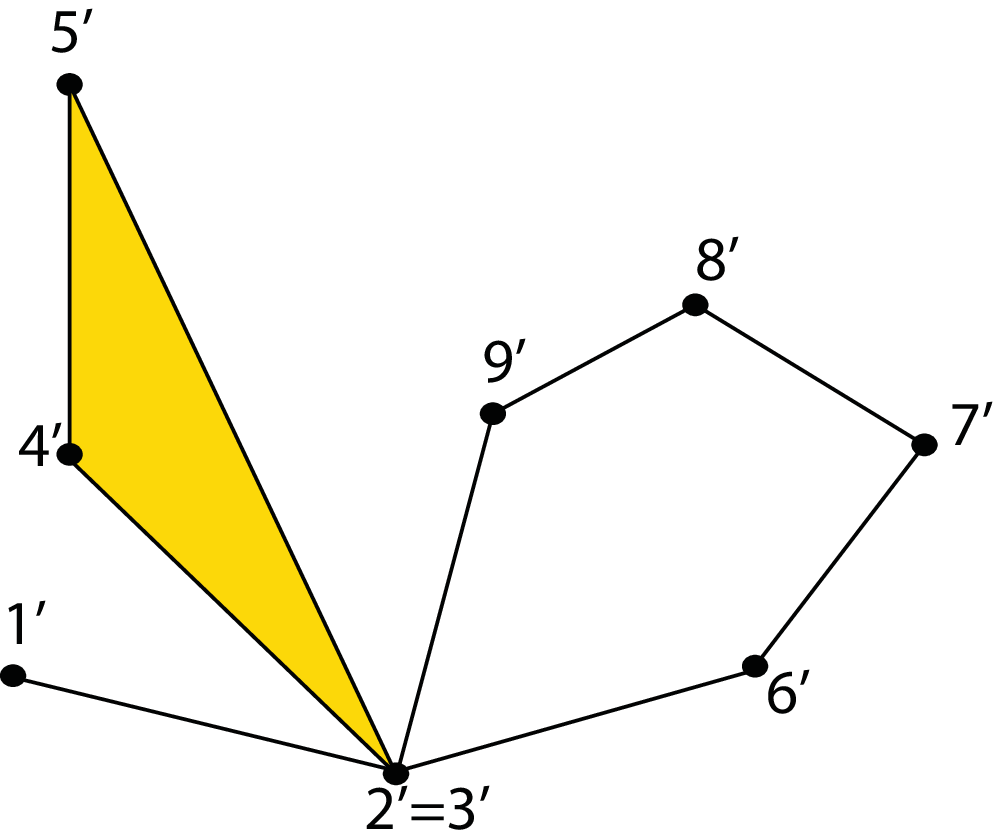}} \qquad\qquad
    \subfigure[raggedright,scriptsize][  ]{\label{contraction02}\includegraphics[scale=0.4]{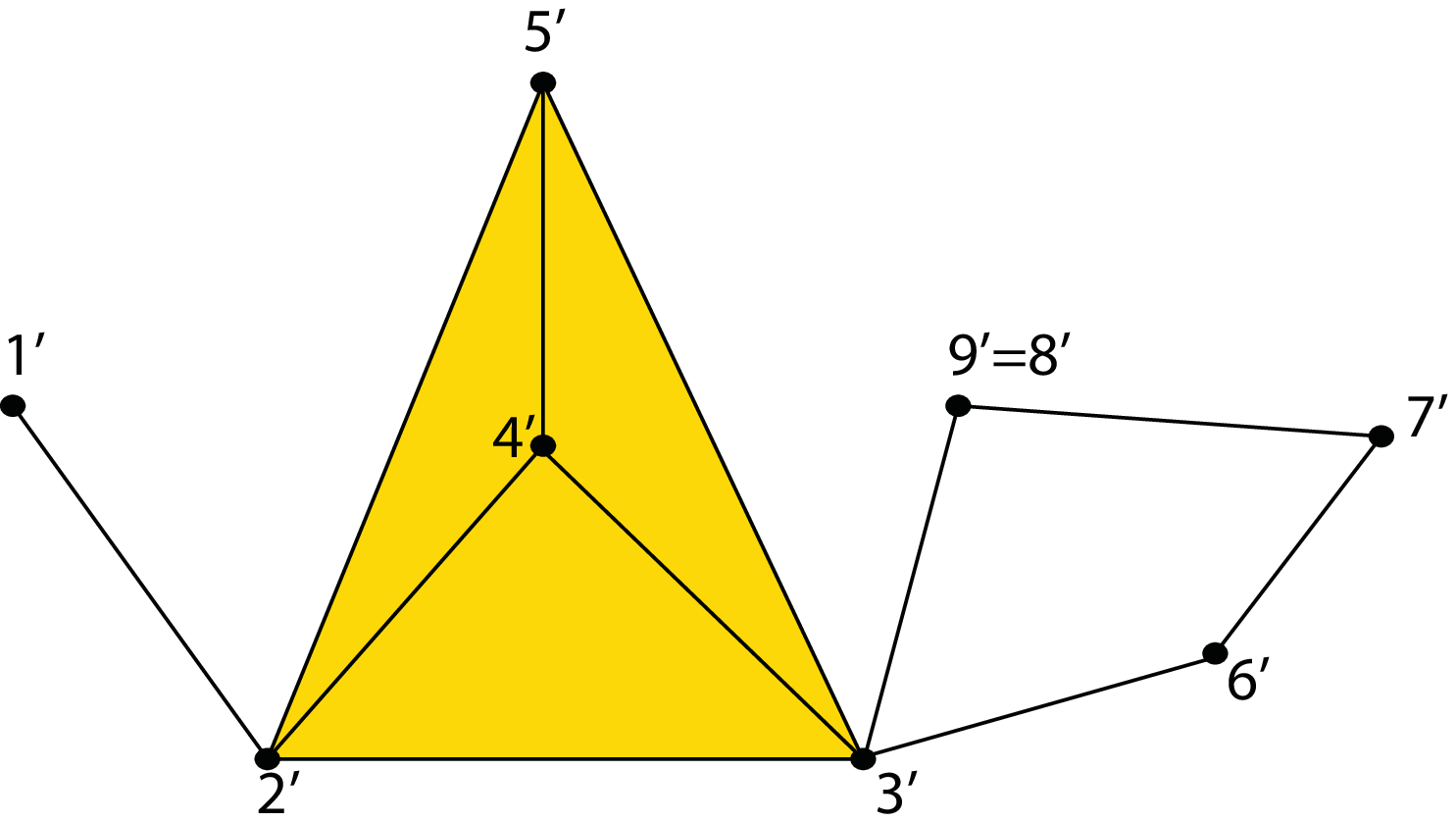}}
  \end{center}
  \caption{Elementary contractions of simplicial complex $K$ }
  \label{figure contrations}
\end{figure} 

Note that both these operations, collapse and contraction, are simplicial maps. However, the interlacing theorems proven Section \ref{section 3} are valid 
for  specific choices of weights on $K'$, see (\ref{rule (i)}) and (\ref{rule (ii)}).
For the combinatorial Laplacian $L^{up}_{n}(K)=\mathcal{L}^{up}_{n}(K,w_{K})$, where $w_{K}\equiv 1$, one might want to consider the case 
of interlacing eigenvalues of $L^{up}_{n}(K)$ ($\Delta^{up}_{n}(K)$) and $L^{up}_{n}(K')$ ($\Delta^{up}_{n}(K')$), where $K'$ is either a collapse or a contraction of $K$. 
These cases are resolved in the following theorems.
\begin{thm}
Let  $\varphi:K\rightarrow K'$  be an  elementary contraction, and let
$\lambda_{1}\leq \ldots\lambda_{N_{K}}$, $\theta_{1}\leq
\ldots\theta_{N_{K'}}$ be the eigenvalues of 
$L^{up}_{n}(K)$ and $L^{up}_{n}(K')$($\Delta^{up}_{n}(K)$ and $\Delta^{up}_{n}(K')$), respectively, then 
\begin{itemize}
\item[(i)] \begin{equation}
\lambda_{k-m(n+2)}\leq  \theta_{k}\leq \lambda_{k+N_{K}-N_{K'}+m(n+2)},
\end{equation}
if the contraction $\varphi$ is of type $(i)$, or 
\item [(ii)]
\begin{equation}
\lambda_{k}\leq  \theta_{k}\leq \lambda_{k+n+2},
\end{equation}
if  $\varphi$ is of type $(ii)$,
\end{itemize}
where $ \lambda_{N_{K}+1}=\ldots=\lambda_{2N_{K}-N_{K'}+m(n+1)}=N$,  $\lambda_{0}=\lambda_{-1}=\ldots=\lambda_{-n-1}=0$,  and $N$ is the number of vertices of  $K$.
\end{thm}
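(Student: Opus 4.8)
The plan is to use the fact, recorded just before the statement, that an elementary contraction $\varphi\colon K\rightarrow K'$ is a simplicial map, and to feed it into the machinery of Sections \ref{section 2} and \ref{section 3}. Theorems \ref{theorem intercing simplicial map rule(ii)} and \ref{Theorem interlacing 2nd choice of weights} already compare the spectrum of $\mathcal{L}^{up}_{n}(K)$ with that of the \emph{induced}-weight operator $\mathcal{L}^{up}_{n}(K',w_{\mathrm{ind}})$, where $w_{\mathrm{ind}}$ is obtained from $w_{K}$ by rule (\ref{rule (ii)}) or (\ref{rule (i)}). The obstruction to quoting these verbatim is that here the target weight on $K'$ is the constant weight $1$ (for $L^{up}_{n}$) or the normalizing weight (for $\Delta^{up}_{n}$), and these differ from $w_{\mathrm{ind}}$ exactly on the faces whose $\varphi$-preimage is not a single simplex. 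The first step is therefore to pin down the local picture: writing $\bar F=\{v_{0},\dots,v_{n-1},v,v'\}$ for the contracted $(n+1)$-simplex and $F,F'$ for the two glued facets, the $n$ facets of $\bar F$ containing both $v$ and $v'$ degenerate under $\varphi$, the pair $F,F'$ is merged into a single $n$-face $G_{1}$, and $\bar F$ itself maps to a loop on $G_{1}$. This pins down $N_{K}-N_{K'}=n+1$ and the loop-count entering the Section \ref{section 3} theorems.

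For $\Delta^{up}_{n}$ this nearly finishes the argument: as shown in the discussion preceding Theorem \ref{Theorem interlacing 2nd choice of weights}, rule (\ref{rule (i)}) reproduces exactly the normalizing weight, so $\mathcal{L}^{up}_{n}(K',w_{\mathrm{ind}})=\Delta^{up}_{n}(K')$ and Theorem \ref{Theorem interlacing 2nd choice of weights} applies directly once the values of $z$ and $N_{K}-N_{K'}$ are substituted. For the combinatorial Laplacian $L^{up}_{n}$ one extra step is needed, since $w_{\mathrm{ind}}$ exceeds $1$ precisely on the merged faces. I would bridge this by composing with Theorem \ref{main theorem combinatorial Laplacian}: the constant-weight complex $(K',1)$ is a weighted subcomplex of $(K',w_{\mathrm{ind}})$ whose proper difference is supported on the doubled faces, so the two spectra interlace with shifts $D_{H}$ and $D_{\mathcal{W}}$ read off from that difference. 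Composing the simplicial-map interlacing with this weight-correction interlacing, the index shifts add, and the padding conventions (the top value $N$ from the vertex bound of Theorem \ref{main theorem combinatorial Laplacian}, respectively $n+2$ for $\Delta^{up}_{n}$, and the zero padding at the bottom) are inherited from the two component theorems.

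The split into two types is dictated by this weight-correction step. In type $(ii)$ the map $\varphi$ is injective on $S_{n+1}(K)$, so $w_{\mathrm{ind}}\equiv 1$ on all $(n+1)$-faces and exceeds $1$ only on the single $n$-face $G_{1}$; the difference is then one $n$-face, giving $D_{H}=1$, $D_{\mathcal{W}}=0$, and composing with the shift $N_{K}-N_{K'}=n+1$ produces exactly $\lambda_{k}\le\theta_{k}\le\lambda_{k+n+2}$. In type $(i)$ the $m$ pairs $\bar F_{i}=\{A_{i},v\}$, $\bar F'_{i}=\{A_{i},v'\}$ map onto common $(n+1)$-faces $\bar G_{i}$, so $w_{\mathrm{ind}}$ also doubles on those $(n+1)$-faces and on the $n$-facets of the form $\{A_{i}\setminus\{a\},v'\}$.

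The main obstacle, and the reason the type $(i)$ bound carries the symmetric factor $m(n+2)$ rather than something sharper, is that this excess-weight locus is \emph{not} itself a subcomplex: each doubled $\bar G_{i}$ has the facet $A_{i}$, which carries no excess weight, so closure fails and Theorem \ref{main theorem combinatorial Laplacian} cannot be applied to it as stated. To repair this I would enlarge the difference to the union of the $m$ \emph{closed} $(n+1)$-simplices $\bar G_{i}$, assigning the offending facets weight zero so that it becomes a legitimate degenerate weighted subcomplex in the sense of Definition \ref{Difference}; each closed $\bar G_{i}$ then contributes its $n+2$ faces of dimension $n$, which is precisely where the factor $m(n+2)$ originates. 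The delicate, but essentially routine, points to verify are purely combinatorial: the $\bar G_{i}$, the facets of $\bar F$, and $G_{1}$ may overlap, so one bounds rather than equates the relevant $\dim C^{n}$ and $\dim C^{n+1}$; $H^{n+1}$ of the completed difference vanishes, controlling $D_{\mathcal{W}}$; and after composing the two interlacing chains the shifts are dominated by $m(n+2)$ below and $N_{K}-N_{K'}+m(n+2)$ above. Once this accounting is in place, the inequalities for both $L^{up}_{n}$ and $\Delta^{up}_{n}$ follow from the same min-max comparison (Lemma \ref{lemma minmax}) that drives Sections \ref{section 2} and \ref{section 3}.
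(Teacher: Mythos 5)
Your overall strategy genuinely differs from the paper's: the paper proves the theorem in a single min-max pass, writing $L^{up}_{n}(K')=\varphi^{*}L^{up}_{n}(K)\varphi-\psi$ and $(\varphi g,\varphi g)=(g,g)+(\phi_{1}g,g)+(\phi_{2}g,g)$ for explicit correction operators $\psi,\phi_{1},\phi_{2}$ supported exactly where the induced and target weights disagree, and then restricting the Rayleigh quotients to subspaces orthogonal to their supports; you instead externalize these corrections by composing the simplicial-map interlacing of Section \ref{section 3} with a second, subcomplex-deletion interlacing step via Theorem \ref{main theorem}. Composition of interlacings is legitimate (index shifts add), and for the combinatorial Laplacian in type $(ii)$ your accounting ($D_{H}=1$, $D_{\mathcal{W}}=0$, $N_{K}-N_{K'}=n+1$) does yield $\lambda_{k}\le\theta_{k}\le\lambda_{k+n+2}$; in type $(i)$ your lower shift $D_{\mathcal{W}}\le m$ is even stronger than the stated $m(n+2)$, which is harmless.

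There are, however, two genuine gaps. First, the normalized case: rule (\ref{rule (i)}) does \emph{not} reproduce the normalizing weight of $K'$. The contracted face $\bar F_{1}$ maps to a loop on $G_{1}=\varphi(F_{1})$, and the computation preceding Theorem \ref{Theorem interlacing 2nd choice of weights} gives $w_{\mathrm{ind}}(G_{1})=\deg G_{1}$ only for the \emph{loop-inclusive} degree; relative to the honest degree in $K'$ one has $w_{\mathrm{ind}}(G_{1})=\deg_{K'}G_{1}+w_{K}(\bar F_{1})$, and in type $(i)$ the identified faces $\bar G_{i}$ moreover receive weight $2$ instead of $1$. Hence $\mathcal{L}^{up}_{n}(K',w_{\mathrm{ind}})\neq\Delta^{up}_{n}(K')$ and Theorem \ref{Theorem interlacing 2nd choice of weights} does not ``apply directly''; this residual discrepancy is precisely what the paper's operator $\phi_{2}$, with its factor $2w_{K}(\bar F_{1})$, is built to absorb. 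So the normalized case needs the same weight-correction step as your combinatorial case; worse, if you insist on rule (\ref{rule (i)}) you pay the shift $z=1$ for the loop and then pay again for the correction, ending at $\lambda_{k+n+3}$ in type $(ii)$, weaker than claimed --- you should run the whole argument with rule (\ref{rule (ii)}). Second, the type $(i)$ count: your excess complex must also contain $G_{1}$, since $w_{\mathrm{ind}}(G_{1})=2$, and $G_{1}$ need not be a facet of any $\bar G_{i}$; counting all $n+2$ facets of each closed $\bar G_{i}$ plus $G_{1}$ gives $m(n+2)+1$, overshooting the stated upper shift by one. This is repairable --- in the proof of Theorem \ref{main theorem} only faces of strictly positive excess weight matter (Lemma \ref{lemma inclusion supprot}), and these number at most $1+m(n+1)\le m(n+2)$ --- but as written your accounting does not close.
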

\begin{proof}
Assume  that the weight functions $w_{K}$ and $w_{K'}$ are identically
equal to $1$, i.e.,  we are dealing with the combinatorial Laplacian $L^{up}_{n}$.
Let $\bar{F}_{1}$,  $F_{1}$, and $F'_{1}$ be 
the faces of  $K$, such that the elementary contraction $\varphi$ is given by identification of $F_{1}$ and $F'_{1}$.
In the proof we will distinguish among two, already mentioned,  cases of elementary collapses $\varphi$.
Denote by $\bar{F}_{2}$, $\bar{F}'_{2}, \ldots, \bar{F}_{m+1},
\bar{F}'_{m+1}$ the $(n+1)$-faces of  $K$ that are identified 
under $\varphi$, if $\varphi$ is of type $(i)$.   Let  $\psi: C^{n}(K',\rb)\rightarrow C^{n}(K',\rb)$ be a map, such that 
\begin{equation}
\psi e_{[G]}=\sum_{G'\in \partial \bar{G}_{i}}w_{K}(\bar{F}_{i})\sgn([G],\partial [\bar{G}_{i}] ) \sgn([G'],\partial [\bar{G}_{i}] ) \frac{1}{w_{K'}(G')}e_{[G']},
\end{equation}
if $\varphi$ is of type $(i)$, $G\in \partial \varphi(\bar{F}_{i})$, where $\bar{G}_{i}=\varphi(\bar{F}_{i})$, for all $2\leq i \leq m+1$, and 
\begin{equation}
\psi e_{[G]}=0,
\end{equation}
for other choices of $G\in K'$ and for $\varphi$ of type $(ii)$.

According to (\ref{contraction equation}) and the considerations for simplicial maps, we have
\begin{equation}
\varphi^{*}\mathcal{L}^{up}_{n}(K)\varphi e_{G}=\sum_{\substack{\bar{F}\in S_{n+1}(K):\\G\in \partial\varphi(\bar{F}) }} w_{K}(\bar{F})\sum_{G'\in \partial \bar{G}} \sgn([G],\partial[\bar{G}]) \sgn([G'],\partial[\bar{G}])\frac{e_{[G']}}{w_{K'}(G')}.
\end{equation}
Therefore,
\small
\begin{equation}
\varphi^{*}L^{up}_{n}(K)\varphi e_{[G]}= L^{up}_{n}(K')e_{[G]} + w_{K}(\bar{F}_{i})\sum_{G'\in \partial \bar{G}_{i}} \sgn([G],\partial[\bar{G}_{i}]) \sgn([G'],\partial[\bar{G}_{i}])\frac{e_{[G']}}{w_{K'}(G')},
\end{equation}
\normalsize
if $\varphi$   is of type $(i)$ and $G\in \partial \varphi(\bar{F}_{i})$,  $2\leq i \leq m+1$, and 
\begin{equation}
\varphi^{*}L^{up}_{n}(K)\varphi e_{[G]}=L^{up}_{n}(K')e_{[G]} ,
\end{equation}
otherwise.
Thus, $L^{up}_{n}(K')=\varphi^{*}L^{up}_{n}(K)\varphi- \psi $.
As for  $(\varphi g, \varphi g)$ we have the following equalities
\begin{align}
(\varphi g,\varphi g) =& \sum_{G\in S_{n}(K')}\sum_{\substack{F\in S_{n}(K):\\ \varphi(F)=G}}(\sgn([F],[G])g(\varphi F))^{2}w_{K}(F)\\
=& \sum_{G\in S_{n}(K')} g([G])^{2} \sum_{\substack{F\in S_{n}(K):\\ \varphi(F)=G}}w_{K}(F)\\
=&(g,g)+  (\phi_{1}g,g) +(\phi_{2}g,g),
\end{align}
where 
\begin{equation}
\phi_{1}e_{[G]}=\left\{\begin{array}{ll}
 w_{K}(F)e_{[G]} & \textrm{ if } \varphi \textrm{ is of type } (i) \textrm{, and } \varphi(F)=G\in \partial \varphi(\bar{F}_{i}), 2\leq i \leq m+1,\\
0 & \textrm{ otherwise, }
\end{array}
\right.
\end{equation}
and
\begin{equation}
\phi_{2}e_{[G]}=\left\{\begin{array}{ll}
w_{K}(F_{1})e_{[G]} & \textrm{ if } F_{1}\in \varphi^{-1}(G)\textrm{, and } \varphi \textrm{ is of type }(ii),\\
0& \textrm{ otherwise.}
\end{array}
\right.
\end{equation}
It  is now straightforward  to deduce the interlacing inequalities. Namely, 
\begin{align}
\theta_{k}=& \min_{\mathcal{V}_{N_{K'}-k}}\max_{g\perp \mathcal{V}_{N_{K'}-k}}\frac{(\mathcal{L}^{up}(K')g,g)}{(g,g)}\\
=& \min_{\mathcal{V}_{N_{K'}-k}}\max_{g\perp \mathcal{V}_{N_{K'}-k}}\frac{(\varphi^{*} \mathcal{L}^{up}(K)\varphi g ,g)-(\psi g,g)}{(g,g)}\\
\geq & \min_{\mathcal{V}_{N_{K'}-k}}\max_{g\perp \mathcal{V}_{N_{K'}-k}, g\perp \mathcal{Y}}\frac{(\varphi^{*} \mathcal{L}^{up}(K)\varphi g ,g)}{(g,g)}\label{ineq C01}\\
= & \min_{\mathcal{V}_{N_{K'}-k}}\max_{g\perp \mathcal{V}_{N_{K'}-k}, g\perp \mathcal{Y}}\frac{( \mathcal{L}^{up}(K)\varphi g ,\varphi g)}{(\varphi g,\varphi g) - (\phi_{1} g, g)-(\phi_{2} g, g)}\label{ineq C02}\\
\geq & \min_{\mathcal{V}_{N_{K'}-k}}\max_{g\perp \mathcal{V}_{N_{K'}-k}, g\perp \mathcal{Y}}\frac{( \mathcal{L}^{up}(K)\varphi g ,\varphi g)}{(\varphi g,\varphi g)}\label{ineq C03}\\
\geq & \min_{\mathcal{V}_{N_{K'}-k+y}}\max_{g\perp \mathcal{V}_{N_{K'}-k+y}}\frac{( \mathcal{L}^{up}(K)\varphi g ,\varphi g)}{(\varphi g,\varphi g)}\label{ineq C04}\\
= & \min_{\mathcal{V}_{N_{K'}-k+y}}\max_{f\perp \mathcal{V}_{N_{K'}-k+y},f\perp \mathcal{W}}\frac{( \mathcal{L}^{up}(K)f,f)}{(f,f)}\label{ineq C05}\\
\geq & \min_{\mathcal{V}_{N_{K'}-k+y}}\max_{f\perp \mathcal{V}_{N_{K}-k+y}}\frac{( \mathcal{L}^{up}(K)f,f)}{(f,f)}\label{ineq C06}\\
\geq & \min_{\mathcal{V}_{N_{K}-k+y}}\max_{f\perp \mathcal{V}_{N_{K}-k+y}}\frac{( \mathcal{L}^{up}(K)f,f)}{(f,f)}\\
\geq &\lambda_{k-y}.
\end{align}
In  inequality (\ref{ineq C01}), $\mathcal{Y}$ denotes the subspace of $C^{n}(K',\rb)$ on which $(\psi g,g)=0$ and the dimension of this space ($\dim \mathcal{Y}=y$)  is
 at most  $(n+2)m$, if $\varphi$ is of type $(i)$, and   $y=0$, if $\varphi$ is an elementary contraction of type $(ii)$.
The vector space $\mathcal{W}$ in (\ref{ineq C06}) is generated by 
$\{\sgn([F_{ji}],[G_{i}])e_{[F_{ji}]}-\sgn([F_{(i+1)j}],[G_{i}])e_{[F_{(i+1)j}]}\mid \bigcup_{j} F_{ji}=\varphi^{-1}(G_{i}), \textrm{ and  }   \bigcup_{i} G_{i}=S_{n}( K')\}$,
and of dimension $\dim \mathcal{W}=N_{K}-N_{K'}$.
Note that the quantity $N_{K}-N_{K'}$ equals $n+1$, if $\varphi$  is
an elementary collapse  of type $(ii)$.

Similarly, the upper interlacing inequality follows from  (\ref{ineq M04}), i.e.,
\begin{align}
\theta_{k}  \leq & \max_{\mathcal{V}_{k-1}}\min_{g\perp \mathcal{V}_{k-1}}\frac{( \mathcal{L}^{up}(K)\varphi g ,\varphi g)-(\psi g,g)}{(\varphi g,\varphi g) - (\phi_{1} g, g)- (\phi_{2} g, g)}\\
\leq & \max_{\mathcal{V}_{k-1}}\min_{g\perp \mathcal{V}_{k-1}}\frac{( \mathcal{L}^{up}(K)\varphi g ,\varphi g)}{(\varphi g,\varphi g) - (\phi_{1} g, g)- (\phi_{2} g, g)}\\
\leq & \max_{\mathcal{V}_{k-1}}\min_{g\perp \mathcal{V}_{k-1}, g\perp \mathcal{Z}}\frac{( \mathcal{L}^{up}(K)\varphi g ,\varphi g)}{(\varphi g,\varphi g)}\label{ineq C07}\\
\leq & \max_{\mathcal{V}_{k+z-1}}\min_{g\perp \mathcal{V}_{k+z-1}, }\frac{( \mathcal{L}^{up}(K)\varphi g ,\varphi g)}{(\varphi g,\varphi g)}\\
= & \max_{\mathcal{V}_{k+z-1}}\min_{f\perp \mathcal{V}_{k+z-1}, f\perp \mathcal{W} }\frac{( \mathcal{L}^{up}(K)f ,f)}{(f,f)}\\
\leq & \max_{\mathcal{V}_{k+z+N_{K}-N_{L}-1}}\min_{f\perp \mathcal{V}_{k+z+N_{K}-N_{L}-1} }\frac{( \mathcal{L}^{up}(K)f ,f)}{(f,f)}\\
\leq & \lambda_{k+N_{K}-N_{L}+z}.
\end{align}
The vector space $\mathcal{Z}$ appearing in inequality (\ref{ineq C07}) is   $\mathcal{Z}=\{g\in C^{n}(K',\rb)\mid  (\phi_{1} g, g)+(\phi_{2} g, g)=0\}$, and 
the dimension $z=\dim \mathcal{Z}$, is equal to  $1$,  if $\varphi$ is of type $(ii)$, and  $m(n+2)$ otherwise.
Thus, in the case $(i)$ we have the following interlacing inequalities:
\begin{equation}
\lambda_{k-m(n+2)}\leq  \theta_{k}\leq \lambda_{k+N_{K}-N_{K'}+m(n+2)}.
\end{equation}
And the case $(ii)$ results in 
\begin{equation}
\lambda_{k}\leq  \theta_{k}\leq \lambda_{k+N_{K}-N_{K'}+1}.
\end{equation}
A very similar method can be used to prove inequalities for the normalized Laplacian,
with  the  difference in the definition of weight functions and maps $\phi_{1}$, $\phi_{2}$.
In this case, the weight functions  $w_{K}$ and $w_{K'}$ have value $1$ on all faces of dimension $n+1$,  whereas
$w_{K}(F)=\deg F$, and $w_{K'}(G)=\deg G$, for all $n$-faces  $F$, $G$, of $K$ and $K'$, respectively.
The maps $\phi_{1},\phi_{2}:C^{n}(K',\rb)\rightarrow C^{n}(K',\rb)$ are given by
\begin{equation}
\phi_{1}e_{[G]}=\left\{\begin{array}{ll}
 w_{K}(\bar{F}_{i})e_{[G]} & \textrm{ if } \varphi \textrm{ is of type } (i) \textrm{, and }G\in \partial \bar{G}_{i}, 2\leq i \leq m+1, \\
0 & \textrm{ otherwise, }
\end{array}
\right.
\end{equation}
and 
\begin{equation}
\phi_{2}e_{[G]}=\left\{\begin{array}{ll}
2w_{K}(\bar{F}_{1})e_{[G]} & \textrm{ if }  G=\varphi(F_{1}) ,\\
0& \textrm{ otherwise.}
\end{array}
\right.
\end{equation}
However, the remainder of the proof is exactly the same as in the case
of the combinatorial Laplacian $L_{n}^{up}$, and so are the dimensions
of vector spaces $\mathcal{Z}$, $\mathcal{Y}$ and $\mathcal{W}$; thus the same interlacing inequalities hold.
\end{proof}
We now briefly discuss interlacing inequalities for elementary collapses.
\begin{thm}
Let $\varphi:K\rightarrow K'$ be an elementary collapse, and let 
$\lambda_{1}\leq \ldots\lambda_{N_{K}}$ and $\theta_{1}\leq
\ldots\theta_{N_{K'}}$ be the eigenvalues of 
$L^{up}_{n}(K)$ and $L^{up}_{n}(K')$($\Delta^{up}_{n}(K)$ and $\Delta^{up}_{n}(K')$), respectively, then 
 \begin{equation}
\lambda_{k}\leq  \theta_{k}\leq \lambda_{k+n+3},
\end{equation}
where $N= \lambda_{N_{K}+1}=\ldots=\lambda_{N_{K}+n+3}$, and $N$ is the number of vertices of  $K$.
\end{thm}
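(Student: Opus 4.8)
The plan is to treat the elementary collapse $\varphi\colon K\to K'$ exactly as the elementary contraction of the preceding theorem, namely as a (generally dimension-dropping) simplicial map, and to push it through the same Courant--Fischer argument used for Theorems \ref{Interlacing covering}, \ref{theorem intercing simplicial map rule(ii)} and \ref{Theorem interlacing 2nd choice of weights}. First I would record the two algebraic identities that drive the contraction proof: writing $\varphi$ for the induced map on $n$-cochains, one has $\mathcal{L}^{up}_{n}(K')=\varphi^{*}\mathcal{L}^{up}_{n}(K)\varphi-\psi$ and $(\varphi g,\varphi g)=(g,g)+(\phi g,g)$, with $\psi,\phi$ the non-negative correction operators. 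The structural simplification for a collapse is that only a single free pair $(\bar F,F)$ is involved, so that no two distinct $(n+1)$-faces of $K$ share an image; hence $\psi=0$, and the support of $\phi$ is the one-dimensional space spanned by $e_{[\varphi(\bar F)]}$. I then set $\mathcal{W}$ to be the orthogonal complement of $\im\varphi$ inside $C^{n}(K,\rb)$, so that $\dim\mathcal{W}=N_{K}-N_{K'}$ and the constraint $f\perp\mathcal{W}$ is exactly the admissibility of the substitution $f=\varphi g$, and $\mathcal{Z}=\operatorname{span}\{e_{[G]}\mid \exists\,\bar H\in S_{n+1}(K),\ \varphi(\bar H)=G\}$, the span of the images of the collapsed $(n+1)$-faces.

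For the lower bound I would reproduce the chain verbatim. Starting from $\theta_{k}=\max_{\mathcal{V}_{k-1}}\min_{g\perp\mathcal{V}_{k-1}}\mathcal{R}_{\mathcal{L}^{up}_{n}(K')}(g)$, I rewrite the Rayleigh quotient through $\varphi$, discard the non-negative term $(\phi g,g)$ from the denominator, restrict the minimization to $f\perp\mathcal{W}$ so that $f=\varphi g$, and finally enlarge the feasible set. Because $\psi=0$ there is no backward index shift, and one obtains $\theta_{k}\ge\lambda_{k}$, precisely as in Theorem \ref{Theorem interlacing 2nd choice of weights}.

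For the upper bound I would dualize the same chain: after restricting to $g\perp\mathcal{Z}$ (which annihilates $\phi$) and then translating the index first by $\dim\mathcal{Z}$ and then by $\dim\mathcal{W}=N_{K}-N_{K'}$ through Lemma \ref{lemma minmax}, I arrive at $\theta_{k}\le\lambda_{k+\dim\mathcal{Z}+\dim\mathcal{W}}$. For an elementary collapse the collapsed $(n+1)$-simplex, the identification and degeneration of its facets, and the removal of the free $n$-face $F$ combine so that $\dim\mathcal{Z}+\dim\mathcal{W}=n+3$; this is the analogue of the $n+2$ obtained for the type-$(ii)$ contraction, enlarged by the one extra degree of freedom carried by the free facet, and it yields $\theta_{k}\le\lambda_{k+n+3}$. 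The padding convention $\lambda_{N_{K}+1}=\dots=\lambda_{N_{K}+n+3}=N$ comes from the a priori bound $\lambda_{N}\le N$ on the combinatorial spectrum (respectively the bound $n+2$ for the normalized operator $\Delta^{up}_{n}$), and the normalized case is handled in complete parallel, only replacing the weight functions and the explicit form of $\phi$, as indicated at the end of the contraction proof.

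The step I expect to be the main obstacle is the exact dimension bookkeeping that confirms $\dim\mathcal{Z}+\dim\mathcal{W}=n+3$ rather than a smaller shift. Concretely, one must account correctly for the $n+2$ facets of $\bar F$ under $\varphi$ (which of them are pairwise identified, and which degenerate into loops and thus leave $C^{n}(K',\rb)$ altogether), verify that the loop created by the collapsed $(n+1)$-face has vanishing boundary and so contributes nothing to $\delta_{n}$, and rule out incidental collisions among the surviving $n$-faces. A secondary but genuine point is to check throughout that the convention $\mathcal{R}_{A}(g)=0$ on degenerate directions (Remark \ref{remark convention}) is consistent with each min-max inequality, since the collapse-induced weights render several cochain directions degenerate.
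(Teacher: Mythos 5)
Your proposal runs on the wrong machinery, and the gap is not merely the dimension bookkeeping you flag at the end. The whole argument presupposes a simplicial map $\varphi\colon K\rightarrow K'$ realizing the elementary collapse and satisfying $L^{up}_{n}(K')=\varphi^{*}L^{up}_{n}(K)\varphi-\psi$ with $\psi=0$. But a collapse is a \emph{deletion}, not an identification: no vertices of $K$ are merged, while a simplicial map is determined by its vertex map. To make the pair $(\bar F,F)$ disappear, a vertex map must identify some vertex of $F$ with another vertex of $\bar F$, and that moves faces far from $\bar F$ as well; in general no such map lands in $K'$ at all (take $K$ to be the triangle $\{1,2,3\}$ together with the edge $\{1,4\}$, collapsing the pair $(\{1,2,3\},\{1,2\})$: the map $1\mapsto 3$ is not simplicial since $\{3,4\}\notin K'$). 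Even when such a map exists (here $2\mapsto 3$), it is not surjective onto $K'$ --- its image misses the vertex $2$ and the edge $\{2,3\}$ --- so the weights it induces on $K'$ via (\ref{rule (ii)}) or (\ref{rule (i)}) vanish on part of $K'$, and $\varphi^{*}L^{up}_{n}(K)\varphi$ annihilates the corresponding cochain directions; in the example it sends $e_{[2]}$ to $0$, whereas $L^{up}_{0}(K')e_{[2]}=e_{[2]}-e_{[3]}$. Hence your identity ``$\psi=0$'' is false, both chains of inequalities collapse with it, and what such a map actually computes is the \emph{contraction} statement of the preceding theorem, not the collapse statement. Your target count also cannot come out: in the example the natural candidates give $\dim\mathcal{Z}+\dim\mathcal{W}=1+1=2$, not $n+3=3$, which is why the ``obstacle'' you anticipate is a symptom of the wrong route rather than a technical hurdle.

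The paper's proof instead stays entirely in Section \ref{section 2}. Regard the collapse as deletion of the weighted subcomplex $(H,w_{H})$, where $H$ is the closure of $\bar F$ and $w_{H}$ agrees with $w_{K}$ on $\bar F$ and on the free facet $F$ and vanishes on all other faces of $H$; the proper difference $(L,w_{L})$ is then the degenerate complex whose positive-weight part is exactly $K'$. Theorem \ref{main theorem} applies with $D_{\mathcal{W}}=\dim C^{n+1}(H,\rb)-\dim H^{n+1}(H,\rb)=1$ and $D_{H}=\dim C^{n}(H,\rb)=n+2$, giving $\lambda_{k-1}\leq\theta^{L}_{k}\leq\lambda_{k+n+2}$ for the eigenvalues $\theta^{L}_{k}$ of $\mathcal{L}^{up}_{n}(L)$. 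By the zero-counting argument of Corollary \ref{Corollary collapses}, the spectrum of $\mathcal{L}^{up}_{n}(L)$ is that of $L^{up}_{n}(K')$ together with exactly one extra zero, contributed by the single zero-weight $n$-face $F$, so $\theta_{k}=\theta^{L}_{k+1}$. Re-indexing turns the two inequalities into $\lambda_{k}\leq\theta_{k}\leq\lambda_{k+n+3}$: the ``$+1$'' on top of $n+2$ is a shift by one zero eigenvalue, not the dimension of any correction space.
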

\begin{proof}
A direct consequence of Theorem \ref{main theorem} and Corollary \ref{Corollary collapses}.
\end{proof}

\section{Eigenvalue interlacing of relative Laplacians}
\label{section 5}
For a simplicial complex $(K,w_{K})$ and a subcomplex $(L,w_{L})$
which is pure of dimension $n$, such that $w_{L}(F)=w_{K}(F)$ for
every $F\in L$, we define the relative Laplacian $\mathcal{L}_{n}(K,L;\rb)$ as in  Section \ref{section 1}.

Let $\pi_{n+1}: C^{n+1}(K,\rb)\rightarrow C^{n+1}(K,L;\rb)$ be a projection map, i.e.,
$$
\pi(e_{[\bar{F}]})=\left\{\begin{array}{lr}
e_{[\bar{F}]} & \textrm{ if } \bar{F}\notin L,\\
0  &  \textrm{ otherwise, } 
\end{array}
\right.
$$ and let $\pi^{*}_{n}: C^{n}(K,L; \rb)\rightarrow C^{n}(K,\rb)$ be the  adjoint of the projection map defined on $n$-cochains, i.e.
$\pi^{*}(e_{[F]})=e_{[F]}$. Hence $\pi^{*}$ is an inclusion map.
Since $C^{n+1}(K,L; \rb)=C^{n+1}(K,\rb)$, then $\pi_{n+1}=id$, and  the following diagram commutes
$$
\begin{CD}
C^{n+1}(K,\rb)  @<\delta_{K}<<  C^{n}(K,\rb) \\
 @VV id V    @AA\pi^{*}A \\
C^{n+1}(K,L;\rb) @<\delta_{K,L}<<   C^{n}(K,L;\rb) \\
\end{CD}.
$$
From the commutativity of this diagram  and  (\ref{decomposition}), we
have
\begin{align}
\mathcal{R}_{\mathcal{L}_{n}(K,L)}(g)=&  \mathcal{R}_{id^{*}id}(\delta_{K}\pi^{*}g)   \mathcal{R}_{\mathcal{L}_{n}(K)}(\pi^{*}g)\mathcal{R}_{\pi\pi^{*}}(g)\\
=&  \mathcal{R}_{\mathcal{L}_{n}(K)}(\pi^{*}g)\mathcal{R}_{\pi\pi^{*}}(g).
\end{align}
Let $\lambda_{1}\leq \ldots \lambda_{N_{K}}$ be the eigenvalues of
$\mathcal{L}^{up}_{n}(K)$ and $\theta_{1}\leq \ldots
\theta_{N_{L}}$the  eigenvalues of $\mathcal{L}^{up}_{n}(K,L)$.
Since
$ \mathcal{R}_{\pi\pi^{*}}(g)=1 $, we have  
\begin{align}
\theta_{k}=\min_{\mathcal{V}_{k}}\max_{g\in \mathcal{V}_{k}}\mathcal{R}_{\mathcal{L}_{n}(K,L)}(g)=& \min_{\mathcal{V}_{k}}\max_{g\in \mathcal{V}_{k}}  \mathcal{R}_{\mathcal{L}_{n}(K)}(\pi^{*}g)\mathcal{R}_{\pi\pi^{*}}(g)\\
\geq & \min_{\mathcal{V}_{k}}\max_{g\in \mathcal{V}_{k}} \mathcal{R}_{\mathcal{L}_{n}(K)}(\pi^{*}g)\\
\geq & \lambda_{k},\\
\end{align}
and this is the lower interlacing inequality among the eigenvalues of $\mathcal{L}^{up}_{n}(K)$ and $\mathcal{L}^{up}_{n}(K,L)$.
The upper interlacing inequality  is
\begin{align}
\label{upper}
\theta_{k}=\min_{\mathcal{V}_{N_{L}-k+1}}\max_{g\in \mathcal{V}_{N_{L}-k+1}}\mathcal{R}_{\mathcal{L}_{n}(K,L)}(g)=& \min_{\mathcal{V}_{N_{L}-k+1}}\max_{g\in \mathcal{V}_{N_{L}-k+1}}   \mathcal{R}_{\mathcal{L}_{n}(K)}(\pi^{*}g)\\
=& \min_{\mathcal{V}_{N_{L}-k+1}}\max_{g\in \mathcal{V}_{N_{L}-k+1}} \mathcal{R}_{\mathcal{L}_{n}(K)}(\pi^{*}g)\\
\leq & \min_{\mathcal{V}_{N_{L}-k+1}}\max_{g\in \mathcal{V}_{N_{L}-k+1}} \mathcal{R}_{\mathcal{L}_{n}(K)}(g)\\
\leq & \min_{\mathcal{V}_{N_{K}-N_{K}+N_{L}-k+1}}\max_{g\in \mathcal{V}_{N_{K}-N_{K}+N_{L}-k+1}} \mathcal{R}_{\mathcal{L}_{n}(K)}(g)\\
\leq & \lambda_{k+N_{K}-N_{L}}.
\end{align}
We collect our results in the  following theorem.
\begin{thm}
Let  $(K,w_{K})$ be a simplicial complex and $(L,w_{L})$  a subcomplex, which is pure of dimension $n$, such that $w_{L}(F)=w_{K}(F)$ for every $F\in L$. Let 
$\lambda_{1}\leq \lambda_{2}\leq\ldots\leq\lambda_{N_{K}}$ and
$\theta_{1}\leq \theta_{2}\leq\ldots\leq\theta_{N_{L}}$ be the eigenvalues of 
$\mathcal{L}^{up}_{n}(K)$ and $\mathcal{L}^{up}_{n}(K,L)$, respectively. Then,
\begin{equation}
\lambda_{k}\leq \theta_{k}\leq  \lambda_{k+N_{K}-N_{L}},
\end{equation} 
where $N=\lambda_{N_{K}+1}=\ldots=\lambda_{2N_{K}-N_{L}}$, and $N$ is the number of vertices of $K$.
\end{thm}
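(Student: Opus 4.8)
The plan is to realize the relative up-Laplacian as the \emph{compression} of the absolute one to a subspace, so that the statement reduces to Cauchy interlacing, and then to extract the two bounds from the variational machinery of Section~\ref{section 2}. First I would set up the projection $\pi_{n+1}\colon C^{n+1}(K,\rb)\to C^{n+1}(K,L;\rb)$ and its formal adjoint $\pi^{*}\colon C^{n}(K,L;\rb)\to C^{n}(K,\rb)$, noting that since the relative group $C^{n}(K,L;\rb)$ consists of the cochains vanishing on $L$, the map $\pi^{*}$ is nothing but the inclusion of that subspace. The decisive structural input is the purity hypothesis: $L$ being pure of dimension $n$ means $S_{n+1}(L)=\emptyset$, whence $C^{n+1}(K,L;\rb)=C^{n+1}(K,\rb)$ and $\pi_{n+1}=\mathrm{id}$. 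Combined with the weight-matching condition $w_{L}(F)=w_{K}(F)$ on $L$, which is exactly what makes $\pi^{*}$ the genuine adjoint of the projection $\pi$ (so that $\pi\pi^{*}=\mathrm{id}$ on $C^{n}(K,L;\rb)$), this collapses the three-factor identity of~(\ref{decomposition}) to its single nontrivial factor.

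Concretely, I would verify commutativity of the square relating $\delta_{K}$ and $\delta_{K,L}$ through $\pi_{n+1}=\mathrm{id}$ and $\pi^{*}$, and then repeat the computation of~(\ref{decomposition}) in this setting to obtain
\begin{equation*}
\mathcal{R}_{\mathcal{L}_{n}(K,L)}(g)=\mathcal{R}_{\mathrm{id}^{*}\mathrm{id}}(\delta_{K}\pi^{*}g)\,\mathcal{R}_{\mathcal{L}_{n}(K)}(\pi^{*}g)\,\mathcal{R}_{\pi\pi^{*}}(g)=\mathcal{R}_{\mathcal{L}_{n}(K)}(\pi^{*}g),
\end{equation*}
because both the projection factor $\mathcal{R}_{\mathrm{id}^{*}\mathrm{id}}$ and the factor $\mathcal{R}_{\pi\pi^{*}}$ are identically $1$. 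This says precisely that $\mathcal{L}^{up}_{n}(K,L)$ is the compression of $\mathcal{L}^{up}_{n}(K)$ to the subspace $W=\{f\in C^{n}(K,\rb)\mid f([F])=0\ \text{for all}\ F\in S_{n}(L)\}=\mathrm{im}\,\pi^{*}$, which has dimension $N_{L}$ and codimension $N_{K}-N_{L}=\lvert S_{n}(L)\rvert$.

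From here both inequalities follow from the min-max theorem (Theorem~\ref{min-max theorem}) together with the index shift of Lemma~\ref{lemma minmax}, exactly as in the proof of Theorem~\ref{main theorem}. For the lower bound I write $\theta_{k}=\min_{\mathcal{V}_{k}}\max_{g\in\mathcal{V}_{k}}\mathcal{R}_{\mathcal{L}_{n}(K)}(\pi^{*}g)$; since $\pi^{*}$ is injective it carries each $k$-dimensional $\mathcal{V}_{k}\subseteq C^{n}(K,L;\rb)$ to a genuine $k$-dimensional subspace of $C^{n}(K,\rb)$, so the relative minimization runs over a subfamily of the subspaces occurring in the variational formula for $\lambda_{k}$, giving $\theta_{k}\ge\lambda_{k}$. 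For the upper bound I pass to the complementary characterization, use $\mathcal{R}_{\mathcal{L}_{n}(K)}(\pi^{*}g)\le\mathcal{R}_{\mathcal{L}_{n}(K)}(g)$ on the ambient space, and widen the optimization by the codimension $N_{K}-N_{L}$; Lemma~\ref{lemma minmax} then turns this into the shift $\theta_{k}\le\lambda_{k+N_{K}-N_{L}}$. The out-of-range convention $N=\lambda_{N_{K}+1}=\cdots=\lambda_{2N_{K}-N_{L}}$ is legitimate because the combinatorial up-Laplacian has all eigenvalues bounded by the number $N$ of vertices of $K$. The only genuinely delicate point is the first step: one must check that purity of $L$ is exactly what forces $\pi_{n+1}=\mathrm{id}$, so that no $\coker$-type correction term of the sort that produced $D_{\mathcal{W}}$ in Theorem~\ref{main theorem} can survive here. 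Once that is pinned down, everything else is the familiar variational bookkeeping.
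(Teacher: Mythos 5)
Your proposal is correct and follows essentially the same route as the paper: the same observation that purity of $L$ forces $\pi_{n+1}=\mathrm{id}$, the same reduction of the three-factor identity (\ref{decomposition}) to $\mathcal{R}_{\mathcal{L}_{n}(K,L)}(g)=\mathcal{R}_{\mathcal{L}_{n}(K)}(\pi^{*}g)$, and the same min-max bookkeeping with the index shift $N_{K}-N_{L}$ for the two bounds. Your explicit framing of the relative Laplacian as a compression (i.e.\ Cauchy interlacing) is exactly what the paper records in its closing remark, so this is a difference of emphasis, not of method.
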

\begin{rem}
It is not difficult to see that the matrix of the relative Laplacian $\mathcal{L}^{up}_{n}(K,L)$ is obtained by deleting rows and columns from the matrix
of $\mathcal{L}^{up}_{n}(K)$, thus one can apply  the  Cauchy interlacing  theorem  and obtain the same results.
However, the method employed here is general and can be used to treat
a variety of different interlacing problems, and the Cauchy interlacing theorem is just one of its special cases.
\end{rem}

\bibliography{interlacing}
\bibliographystyle{plain}{}

\end{document}